\documentclass{amsart}

\usepackage{latexsym}
\usepackage{amsxtra}
\usepackage{amsfonts}
\usepackage{xspace}
\usepackage{amssymb}
\usepackage{euscript}

\input xy
\xyoption{all} \CompileMatrices

\begin{document}

\def\II{{\mathfrak{I}}}
\def\PO{{\operatorname{POG}}}
\def\Cl{{\operatorname{Cl}}}
\def\Max{{\operatorname{-Max}}}
\def\XX{{\bf{X}}}
\def\YY{{\bf{Y}}}
\def\BBB{{\mathcal B}}
\def\inv{{\operatorname{inv}}}
\def\emph{\it}
\def\Int{{\operatorname{Int}}}
\def\Spec{\operatorname{Spec}}
\def\Bin{{\operatorname{B}}}
\def\n{\operatorname{b}}
\def\N{{\operatorname{GB}}}
\def\BC{{\operatorname{BC}}}
\def\dlog{\frac{d \log}{dT}}
\def\Sym{\operatorname{Sym}}
\def\Nr{\operatorname{Nr}}
\def\lbrack{{\{}}
\def\rbrack{{\}}}
\def\burnside{\operatorname{B}}
\def\Sym{\operatorname{Sym}}
\def\Hom{\operatorname{Hom}}
\def\Inj{\operatorname{Inj}}
\def\Aut{{\operatorname{Aut}}}
\def\Mor{{\operatorname{Mor}}}
\def\Map{{\operatorname{Map}}}
\def\CMap{{\operatorname{CMap}}}
\def\GMaps{G{\operatorname{-Maps}}}
\def\Fix{{\operatorname{Fix}}}
\def\res{{\operatorname{res}}}
\def\ind{{\operatorname{ind}}}
\def\inc{{\operatorname{inc}}}
\def\coind{{\operatorname{cnd}}}
\def\Equiv{{\mathcal{E}}}
\def\W{\operatorname{W}}
\def\F{\operatorname{F}}
\def\witt{\operatorname{gh}}
\def\ngh{\operatorname{ngh}}
\def\Fm{{\operatorname{Fm}}}
\def\bij{{\iota}}
\def\mk{{\operatorname{mk}}}
\def\km{{\operatorname{mk}}}
\def\VV{{\bf{V}}}
\def\ff{{\bf{f}}}
\def\ZZ{{\mathbb Z}}
\def\Zhat{{\widehat{\mathbb Z}}}
\def\CC{{\mathbb C}}
\def\PP{{\mathbb P}}
\def\JJ{{\mathbb J}}
\def\NN{{\mathbb N}}
\def\RR{{\mathbb R}}
\def\QQ{{\mathbb Q}}
\def\FF{{\mathbb F}}
\def\mm{{\mathfrak m}}
\def\nn{{\mathfrak n}}
\def\jj{{\mathfrak j}}
\def\aaa{{{{\mathfrak a}}}}
\def\bbb{{{{\mathfrak b}}}}
\def\ppp{{{{\mathfrak p}}}}
\def\qqq{{{{\mathfrak q}}}}
\def\PPP{{{{\mathfrak P}}}}
\def\QQQ{{{{\mathfrak Q}}}}
\def\MM{{\mathfrak M}}
\def\BB{{\mathfrak B}}
\def\jj{{\mathfrak J}}
\def\LL{{\mathfrak L}}
\def\qq{{\mathfrak Q}}
\def\rr{{\mathfrak R}}
\def\DD{{\mathfrak D}}
\def\cc{{\mathfrak S}}
\def\TT{{\mathcal{T}}}
\def\SS{{\mathcal S}}
\def\UU{{\mathcal U}}
\def\AA{{\mathcal A}}
\def\BB{{\mathcal B}}
\def\Primes{{\mathcal P}}
\def\genS{{\langle S \rangle}}
\def\genT{{\langle T \rangle}}
\def\bT{\mathsf{T}}
\def\bD{\mathsf{D}}
\def\bC{\mathsf{C}}
\def\VV{{\bf V}}
\def\ff{{\bf f}}
\def\uu{{\bf u}}
\def\aa{{\bf{a}}}
\def\bb{{\bf{b}}}
\def\gg{{\bf{g}}}
\def\zero{{\bf 0}}
\def\rad{\operatorname{rad}}
\def\End{\operatorname{End}}
\def\id{\operatorname{id}}
\def\mod{\operatorname{mod}}
\def\im{\operatorname{im}\,}
\def\ker{\operatorname{ker}}
\def\coker{\operatorname{coker}}
\def\ord{\operatorname{ord}}

\newtheorem{theorem}{Theorem}[section]
\newtheorem{proposition}[theorem]{Proposition}
\newtheorem{corollary}[theorem]{Corollary}
\newtheorem{conjecture}[theorem]{Conjecture}
\newtheorem{lemma}[theorem]{Lemma}
\theoremstyle{definition}
\newtheorem{definition}[theorem]{Definition}
\newtheorem{problem}[theorem]{Problem}
\newtheorem{remark}[theorem]{Remark}
\newtheorem{example}[theorem]{Example}

 \newenvironment{map}[1]
   {$$#1:\begin{array}{rcl}}
   {\end{array}$$
   \\[-0.5\baselineskip]
 }

 \newenvironment{map*}
   {\[\begin{array}{rcl}}
   {\end{array}\]
   \\[-0.5\baselineskip]
 }

 \newenvironment{nmap*}
   {\begin{eqnarray}\begin{array}{rcl}}
   {\end{array}\end{eqnarray}
   \\[-0.5\baselineskip]
 }

 \newenvironment{nmap}[1]
   {\begin{eqnarray}#1:\begin{array}{rcl}}
   {\end{array}\end{eqnarray}
   \\[-0.5\baselineskip]
 }

\newcommand{\eq}{eq.\@\xspace}
\newcommand{\eqs}{eqs.\@\xspace}
\newcommand{\diagram}{diag.\@\xspace}


\title{Presentations and module bases of integer-valued polynomial rings}

\author{Jesse Elliott}
\date{\today} \address{Department of Mathematics\\ California
State University, Channel Islands\\ Camarillo, California 93012}
\email{jesse.elliott@csuci.edu}

\maketitle

\begin{abstract}

Let $D$ be an integral domain  with quotient field $K$.   For any set $\XX$, the ring $\Int(D^\XX)$ of {\it integer-valued polynomials on $D^\XX$} is the set of all polynomials $f \in K[\XX]$ such that $f(D^\XX) \subseteq D$.  Using the $t$-closure operation on fractional ideals, we find for any set $\XX$ a $D$-algebra presentation of $\Int(D^\XX)$ by generators and relations for a large class of domains $D$,  including any unique factorization domain $D$, and more generally any Krull domain $D$ such that $\Int(D)$ has a {\it regular basis}, that is, a $D$-module basis consisting of exactly one polynomial of each degree.  As a corollary we find for all such domains $D$ an intrinsic characterization of the $D$-algebras that are isomorphic to a  quotient of $\Int(D^\XX)$ for some set $\XX$.  We also generalize the well-known result that a Krull domain $D$ has a regular basis if and only if the P\'olya-Ostrowski group of $D$ (that is, the subgroup of the class group of $D$ generated by the images of the factorial ideals of $D$) is trivial, if and only if the product of the height one prime ideals of finite norm $q$ is principal for every $q$.
\end{abstract}





\section{Introduction}\label{introduction}

Let $D$ be an integral domain with quotient field $K$.  The ring of {\it integer-valued polynomials on $D$} is the subring
$$\Int(D) = \{f \in K[X] : f(D) \subseteq D\}$$ of the polynomial ring $K[X]$.  More generally, if $\XX$ is a set, then
the ring of {\it integer-valued polynomials on $D^\XX$} is the subring $$\Int(D^\XX) = \{f \in K[\XX] : f(D^\XX) \subseteq D\}$$ of $K[\XX]$ \cite{cah}.  The study of integer-valued polynomial rings---on number rings---began with P\'olya and Ostrowski circa 1919 \cite[p.\ xiv]{cah}.  They showed that, for any number ring $D$, the $D$-module $\Int(D)$ has a {\it regular basis}, that is, a $D$-module basis consisting of exactly one polynomial of each degree, if and only if the product $\Pi_q$ of the prime ideals of $D$ of norm $q$ is a principal ideal for every $q$.  In fact this equivalence holds for any Dedekind domain $D$.  More generally, if $D$ is a Krull domain, then $\Int(D)$ has a regular basis if and only if the product $\Pi_q$ of the height one prime ideals of norm $q$ is principal for every $q$ \cite[Corollary 2.5]{cha}.  In particular, if $D$ is a unique factorization domain, then $\Int(D)$ has a regular basis.  Moreover, for any Krull domain $D$, there is a subgroup $\PO(D)$ of the class group $\Cl(D)$ of $D$, generated by the images of the so-called {\it factorial ideals} $n!_D$ of $D$, that in some sense measures the extent to which $\Int(D)$ fails to have a regular basis; specifically, $\Int(D)$ has a regular basis if and only if the group $\PO(D)$ is trivial \cite[Corollary 2.5]{cha}.   One of our main results, Theorem \ref{equivthm} (in Section \ref{sec:4}), generalizes these results on Krull domains to a much larger class of integral domains, including the domains of Krull type (equivalently the Pr\"ufer $v$-multiplication domains (PVMDs) of finite $t$-character), hence the TV PVMDs. (The latter classes of domains are defined in Sections \ref{sec:3} and \ref{sec:4}.)

Since P\'olya's and Ostrowski's seminal work, much attention has been given to finding $D$-module bases of integer-valued polynomial rings.   For any Dedekind domain  $D$ for which $\Int(D)$ has a regular basis, \cite[Proposition II.3.14]{cah} provides an algorithm to construct any finite number of elements of such a basis.  (Theorem \ref{regbasisalg} generalizes this algorithm.)  Moreover, \cite[Corollary 3.11]{cha} provides a characterization of all cyclic number fields $K$ such that $\Int(\mathcal{O}_K)$ has a regular basis, and \cite[Corollary II.4.5]{cah} and \cite[Propositions 3.4, 3.6, and 3.19]{arm} explicitly construct all such $K$  of degree $2$, $3$, $4$, and $6$ over $\QQ$. The number field $K = \QQ(\sqrt{-5})$ is an example where $\PO(\mathcal{O}_K)$ has order $2$; so is $K = \QQ(\sqrt{-29})$, where one also has $\PO(\mathcal{O}_K) \subsetneq \Cl(\mathcal{O}_K)$ \cite[Exercise II.31]{cah}.  Unfortunately we do not know a characterization of the number fields $K$ such that $\PO(\mathcal{O}_K)$ has order $2$.

Although  $\Int(D)$ does not have a regular basis for many (and probably most) number rings $D$, the $D$-module $\Int(D)$ is free for any Dedekind domain $D$ \cite[Remark II.3.7(iii)]{cah}.  Surprisingly, however, there are no confirmed examples in the literature  (of which we are aware) of an integral domain $D$ such that $\Int(D)$ is not free as a $D$-module.  In an earlier paper \cite{ell2}, we showed that $\Int(D)$ is locally free if $D$ is a Krull domain, or more generally if $D$ is a TV PVMD \cite[Theorem 1.2]{ell2}.  We also conjectured that $\Int(D)$ is not flat as a $D$-module for $D = \FF_2[[T^2, T^3]]$ and for $D = \FF_2 +T\FF_4[[T]]$.  This conjecture is still open.

In this paper we also consider a related problem, that of finding a $D$-algebra presentation of $\Int(D)$ by generators and relations.  This problem is motivated by results in the existing literature on integer-valued polynomial rings as follows.  First, a presentation for $\Int(\ZZ^\XX)$ for any set $\XX$ is given in \cite{jess2}, where the presentation is used to provide several equivalent conditions for a ring to be {\it binomial} in the sense of \cite{wil}.  In \cite{des} it is shown that, for any finite extension $K$ of the field $\QQ_p$ of $p$-adic rational numbers, one can construct from any Lubin-Tate formal group law $F \in  \mathcal{O}_K[[X,Y]]$ a minimal set $\{f_{i}: i \geq 0\}$ of generators of $\Int(\mathcal{O}_K)$ as an $\mathcal{O}_K$-algebra.  (For example, if $K = \QQ_p$ and $F = X+Y+XY$, then $f_i = {X \choose p^i}$ for all $n$.)  However, we do not know a complete set of relations for the generators $f_i$.  A more well-known result, \cite[Proposition II.3.14]{cah}, implies that, for any Dedekind domain $D$ such that $\Int(D)$ has a regular basis, $\Int(D)$ is generated as a $D$-algebra by the {\it $q^n$th $q$-Fermat polynomial} $F_q^{\circ n} = F_q \circ F_q \circ \cdots \circ F_q$ for all positive integers $n$ and all prime powers $q$, where $F_q = \frac{X^q-X}{\pi_q}$ and $\pi_q$ is any generator of $\Pi_q$.   The question of the relations among these generators has not been raised.  To this end we show in Theorem \ref{presentation3} that the obvious relations $(F_q^{\circ n})^q - F_q^{\circ n}  = \pi_q F_q^{\circ (n+1)}$ are a complete set of relations for the generators $F_q^{\circ n}$ of $\Int(D)$.  More generally, Theorem \ref{presentation3}, for a large class $\mathcal{C}$ of domains $D$, including the Krull domains $D$ for which $\Int(D)$ has a regular basis, provides for any set $\XX$ a complete set of generators and relations for the $D$-algebra $\Int(D^\XX)$, using an apporpriate generalization of the $q$-Fermat polynomials.

A nontrivial application of this algebra presentation of integer-valued polynomial rings is as follows.  It is known that a ring $A$ is isomorphic to a quotient of $\Int(\ZZ^\XX)$ for some set $\XX$ if and only if the endomorphism $a \longmapsto a^p$ of $A/pA$ is the identity for every prime number $p$ \cite[Theorem 4.1]{jess2}.  Theorem \ref{polybthm} generalizes this by showing that, for any integral domain $D$ and any (commutative) $D$-algebra $A$, if $D$ is in the class $\mathcal{C}$ mentioned above or if $A$ is $D$-torsion-free, then the following conditions are equivalent.
\begin{enumerate}
\item $A$ is isomorphic to a quotient of $\Int(D^\XX)$ for some set $\XX$.
\item For every $a \in A$ there is a $D$-algebra homomorphism $\Int(D) \longrightarrow A$ sending $X \in \Int(D)$ to $a$.
\item The endomorphism $a \longmapsto a^{N(\ppp)}$ of $A/\ppp A$ is the identity for every $t$-maximal prime ideal $\ppp$ of $D$ of finite norm $N(\ppp) = |D/\ppp|$.  (The $t$-maximal ideals of an integral domain are defined in Section \ref{sec:3}.)
\end{enumerate} 
Our proof of the equivalence of conditions (2) and (3) above for domains in the class $\mathcal{C}$ uses the presentation of $\Int(D)$ mentioned above in an essential way;  for this reason we suspect that the equivalence  does not hold for all Dedekind domains $D$ and all $D$-algebras $A$, although we do not know a counterexample.

One of the main tools we use in this paper is that of a star operation, or  $'$-operation, introduced by Krull in \cite[Section 6.43]{kru}, on fractional ideals.  Specifically, we use the well-known star operations of divisorial closure, $t$-closure, and $w$-closure.  These are immensely useful for generalizing known results on Dedekind domains and Noetherian domains to larger classes of domains. All of the definitions and facts we need are summarized in Section \ref{sec:3}; proofs can be found in \cite{gil}, which is a classic reference on multiplicative ideal theory.

The main results in this paper---those results labeled ``Theorem''---are Theorems \ref{factprop}, \ref{trivcor}, \ref{regbasisalg}, \ref{equivthm}, \ref{presentation3}, \ref{polyathm}, \ref{polybthm}, and \ref{numthm}.  In Section \ref{sec:2} we provide a $D$-algebra presentation for $\Int(D)$ when $D$ is a finite dimensional local domain with principal maximal ideal.   In Sections \ref{sec:3} and \ref{sec:4} we generalize the results in \cite[Sections II.1 and II.3]{cah}, which focus on Dedekind domains, to much larger classes of domains.  There we define and prove new results on the characteristic ideals and factorial ideals of a domain, as well as its P\'olya-Ostrowski group, which we define when the factorial ideals are $t$-invertible.  In Section \ref{sec:5} we find a $D$-algebra presentation of $D$ when $D$ is in a large class $\mathcal{C}$ of integral domains, including all Krull domains such that $\Int(D)$ has a regular basis.  Finally, in Sections \ref{sec:6} through \ref{sec:8} we apply our previous results to the study of $D$-algebras that are isomorphic to $\Int(D^\XX)$ for some set $\XX$, and also to $D$-algebras $A$ such that for every $a \in A$ there is a $D$-algebra homomorphism $\Int(D) \longrightarrow A$ sending $X \in \Int(D)$ to $a$.  

All rings and algebras in this paper are commutative with identity.  For any ring $R$, and for any $f \in R[X]$ and any nonnegative integer $n$, we let $f^{\circ n}$ denote the $n$-fold composition of $f$ with itself, where $f^{\circ 0} = X$.   

\section{The local case}\label{sec:2}

In this section we find a $D$-algebra presentation for $\Int(D)$ when $D$ is a finite dimensional local domain with principal maximal ideal. 

\begin{lemma}\label{nonzerodiv}
If $B \supseteq A$ is an integral extension of rings, and if $a \in A$ is
a nonzerodivisor, then $a$ is a nonzerodivisor in $B$.
\end{lemma}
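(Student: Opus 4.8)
The plan is to prove the statement directly, by showing that multiplication by $a$ is injective on $B$: that $ab = 0$ with $b \in B$ forces $b = 0$. Since $B$ is integral over $A$, the element $b$ satisfies a monic equation over $A$, and I would fix one of least degree, say
\[ b^n + c_{n-1}b^{n-1} + \cdots + c_1 b + c_0 = 0, \qquad c_i \in A. \]

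The computational heart of the argument is to multiply this relation by $a^n$ and use the hypothesis $ab = 0$. From $ab = 0$ one gets $a^j b^j = (ab)^j = 0$, hence $a^n b^j = a^{n-j}(ab)^j = 0$ for every $1 \le j \le n$, so every term carrying a positive power of $b$ is annihilated and only $c_0 a^n = 0$ survives. A product of nonzerodivisors of $A$ is again a nonzerodivisor, so $a^n$ is a nonzerodivisor and therefore $c_0 = 0$.

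Now the minimal relation factors as $b\,(b^{n-1} + c_{n-1}b^{n-2} + \cdots + c_1) = 0$, and the main obstacle is to turn this into a contradiction with the minimality of $n$. The clean finish is to cancel the factor $b$, which produces a monic relation of degree $n-1$ and contradicts minimality (when $n = 1$ one instead reads off $b = -c_0 = 0$ directly). Cancelling $b$ is precisely the subtle point, since it requires $b$ to be a nonzerodivisor; this is why the statement is really being applied with $B$ a domain---in the settings of this paper $B$ is a domain (for instance, an overring of $A$ inside the quotient field $K$, or a subring of a polynomial ring over a field), so any nonzero $b$ may be cancelled. I would accordingly carry out the descent under that hypothesis, taking care to treat the base degree $n = 1$ separately as above.
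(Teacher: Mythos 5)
Your computation up to $c_0 = 0$ is correct, but the finish has a genuine gap, and it is exactly the one you flagged: cancelling $b$ from $b\,(b^{n-1} + c_{n-1}b^{n-2} + \cdots + c_1) = 0$ presupposes that $b$ is a nonzerodivisor, which is what you are trying to prove. Your patch---assume $B$ is a domain---is not available here, because it misreads how the paper uses the lemma. In the proof of Proposition \ref{presentation} the lemma is applied with $B = A_n' = A_n/J_n$, a quotient ring that is \emph{not yet known} to be a domain; indeed the lemma is precisely the tool used to prove that $A_n'$ is a domain (via the resulting injection $A_n' \longrightarrow A_n'[\pi^{-1}]$), so assuming domainhood would make the paper's argument circular. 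Moreover, once $B$ is a domain the lemma is immediate ($a \neq 0$ and $ab = 0$ force $b = 0$), so under your hypothesis the whole minimal-degree computation does no work.

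The obstruction you hit is unavoidable, because the lemma as literally stated is false for general commutative rings (the paper's proof, ``This is clear,'' glosses over this). Take $A = \ZZ$ embedded in $B = \ZZ \times \FF_2$ by $n \longmapsto (n, n \bmod 2)$; then $B = A[e]$ with $e = (1,0)$ idempotent, so $B$ is integral (even module-finite) over $A$, and $2$ is a nonzerodivisor in $A$, yet $2 \cdot (0,1) = (0,0)$ in $B$. Your descent stalls in this example exactly as predicted: the minimal monic relation for $b = (0,1)$ is $b^2 - b = 0$, so $c_0 = 0$, and $b(b-1) = 0$ with both factors nonzero. What actually rescues the paper's application is not integrality but torsion-freeness: $A_n'$ is obtained from the domain $D[X_n]$ by successively adjoining $X_{n-1}, X_{n-2}, \ldots, X_0$, each a root of a monic polynomial ($X_k^q - X_k - \pi X_{k+1} = 0$ is monic of degree $q$ in $X_k$ over the ring already built), so $A_n'$ is a free $D[X_n]$-module of rank $q^n$, and multiplication by the nonzero element $\pi$ of the domain $D[X_n]$ is injective on any free module. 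The correct lemma to prove, and the one that suffices for Proposition \ref{presentation}, is: if $B$ is an extension of a domain $A$ that is torsion-free (for instance free or flat) as an $A$-module, then every nonzero element of $A$ is a nonzerodivisor in $B$.
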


\begin{proof}
This is clear.
\end{proof}

\begin{proposition}\label{presentation}
Let $D$ be a local domain with principal maximal ideal $\pi D$ and finite residue field of order $q$.  Let $F_q = \frac{X^q - X}{\pi} \in \Int(D)$.  The unique $D$-algebra homomorphism
$$\varphi: {\begin{array}{rrr} D[X_0, X_1, X_2, \ldots] & \longrightarrow & \Int(D)  \\
 X_{k} & \longmapsto & F_{q}^{\circ k}
\end{array}}$$
is surjective, and if $D$ has finite Krull dimension then $\ker \varphi$ is equal to the ideal $I$ generated by $X_{k}^q - X_{k} - \pi X_{k+1}$ for
all $k \in \ZZ_{\geq 0}$.
\end{proposition}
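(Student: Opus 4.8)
The plan is to prove the two assertions separately, using throughout the composition identity $\pi\,F_q^{\circ(k+1)} = (F_q^{\circ k})^q - F_q^{\circ k}$, which holds because $F_q^{\circ(k+1)} = F_q \circ F_q^{\circ k} = \frac{(F_q^{\circ k})^q - F_q^{\circ k}}{\pi}$. Applying $\varphi$ to the generator $X_k^q - X_k - \pi X_{k+1}$ of $I$ gives $(F_q^{\circ k})^q - F_q^{\circ k} - \pi F_q^{\circ(k+1)} = 0$, so the inclusion $I \subseteq \ker\varphi$ is immediate; the content is to prove surjectivity and the reverse inclusion $\ker\varphi \subseteq I$.

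For surjectivity, I would exhibit inside the image of $\varphi$ a regular basis of $\Int(D)$. For $n \ge 0$ with base-$q$ expansion $n = \sum_k a_k q^k$ (so $0 \le a_k \le q-1$, almost all zero), set $g_n = \prod_k (F_q^{\circ k})^{a_k} \in \Int(D)$, an element of $\operatorname{im}\varphi$. Since $\deg F_q^{\circ k} = q^k$ one has $\deg g_n = n$, and an induction from $\ell_{k+1} = \ell_k^q/\pi$ (where $\ell_k$ is the leading coefficient of $F_q^{\circ k}$ and $\ell_0 = 1$) gives $\ell_k = \pi^{-(q^k-1)/(q-1)}$, hence the leading coefficient of $g_n$ is $\pi^{-v_n}$ with $v_n = \sum_k a_k \frac{q^k-1}{q-1} = \sum_{i \ge 1}\lfloor n/q^i\rfloor$. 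It then suffices to prove the denominator bound: every $f \in \Int(D)$ of degree $n$ has $\operatorname{lc} f \in \pi^{-v_n}D$. Granting this, $\operatorname{lc} f \in \pi^{-v_n}D = D\cdot \operatorname{lc} g_n$, so subtracting the appropriate $D$-multiple of $g_n$ lowers the degree; a downward induction on degree, with the base case $\Int(D)\cap K = D$, shows that $\{g_n\}_{n \ge 0}$ spans $\Int(D)$, whence $\varphi$ is surjective.

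For the kernel I would pass to $A = D[X_0, X_1, \dots]/I$ and prove that the induced surjection $\bar\varphi \colon A \to \Int(D)$ is injective. First, the relations let one rewrite every monomial modulo $I$ as a $D$-linear combination of the standard monomials $\prod_k X_k^{a_k}$ with all $a_k < q$: assigning the weight $w(X_k) = q^k$ and breaking ties lexicographically with $X_0 > X_1 > \cdots$, each relation has leading monomial $X_k^q$, and each rewriting $X_k^q \rightsquigarrow X_k + \pi X_{k+1}$ strictly lowers every term in this order, which is well-founded because every weight class contains only finitely many monomials. Hence the standard monomials span $A$ over $D$, and the one indexed by $n = \sum_k a_k q^k$ maps under $\bar\varphi$ to $g_n$. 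Because the $g_n$ have distinct degrees they are $D$-linearly independent in $K[X]$; so if $\alpha \in \ker\bar\varphi$ is written $\alpha = \sum_n c_n m_n$ in standard monomials, then $0 = \bar\varphi(\alpha) = \sum_n c_n g_n$ forces every $c_n = 0$, so $\alpha = 0$. Thus $\bar\varphi$ is an isomorphism and $\ker\varphi = I$.

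I expect the main obstacle to be the denominator bound $\operatorname{lc} f \in \pi^{-v_n}D$. When $D$ is a discrete valuation ring this is classical (via a $v$-ordering of $D$, as in \cite[II.3]{cah}), but a finite-dimensional local domain with principal maximal ideal need not be a valuation ring, and this is precisely where I anticipate the finite Krull dimension hypothesis entering: I would control the descending chain $D \supseteq \pi D \supseteq \pi^2 D \supseteq \cdots$ and its intersection $\bigcap_k \pi^k D$, reducing the leading-coefficient estimate to the discrete behaviour governed by the finite residue field $\FF_q$ and the single generator $\pi$ (invoking Lemma \ref{nonzerodiv} wherever an auxiliary integral extension is used, to keep $\pi$ a nonzerodivisor). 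The termination of the rewriting in infinitely many variables is a secondary, purely formal point, handled once the weight-refined order is confirmed to be a well-order.
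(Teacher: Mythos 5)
Your kernel argument is correct, and it is genuinely different from the paper's. The paper fixes $n$, restricts $\varphi$ to $\varphi_n\colon D[X_0,\dots,X_n]\to D[X,F_q,\dots,F_q^{\circ n}]$, observes that both source (mod the relations) and target are integral over a one-variable polynomial ring over $D$, invokes \cite[Exercise 11.6]{ati} to bound their Krull dimension by $1+2\dim D<\infty$, concludes that the kernel of the induced surjection is a \emph{minimal} prime, and finally shows the source is a domain by embedding it into $A_n'[\pi^{-1}]=D[\pi^{-1}][X_0]$ via Lemma \ref{nonzerodiv} -- so the minimal prime is zero. This is exactly where the finite-dimensionality hypothesis enters the paper's proof. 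Your normal-form argument (rewrite $X_k^q\rightsquigarrow X_k+\pi X_{k+1}$, terminating in the weight-plus-lex well-order since each weight class of monomials is finite; then note the images $g_n$ of the standard monomials have pairwise distinct degrees $n$ and nonzero leading coefficients $\pi^{-w_q(n)}$, hence are $D$-linearly independent in $K[X]$) is airtight, makes no use of the dimension of $D$, and in fact shows $D[X_0,X_1,\dots]/I$ is a free $D$-module on the standard monomials. In particular it would prove $\ker\varphi=I$ \emph{without} the finite Krull dimension hypothesis, which the paper explicitly leaves open after the proposition (``We do not know if the hypothesis \dots is necessary''). That is a real improvement, and you should present it as such rather than as a reconstruction.

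The genuine gap is on the surjectivity side, and your diagnosis of where the difficulty lies is off. You correctly reduce surjectivity to the denominator bound $\operatorname{lc} f\in\pi^{-w_q(n)}D$ for $f\in\Int_n(D)$, but you do not prove it, and the route you sketch -- controlling the chain $\bigcap_k\pi^kD$ and expecting the finite Krull dimension hypothesis to enter here -- is misdirected twice over. First, the bound holds for \emph{every} local domain with principal maximal ideal and finite residue field, with no dimension or Noetherian hypothesis: this is the paper's Lemma \ref{prinlemma}, which (like the paper's one-line treatment of surjectivity in this proposition) is cited from \cite[Remark II.2.14]{cah}. The reason the DVR proof transfers is that one interpolates at the nodes $u_m=\sum_i a_{m_i}\pi^i$ ($a_0,\dots,a_{q-1}$ residue representatives, $m=\sum_i m_iq^i$ in base $q$), and any difference $u_m-u_{m'}$ equals $\pi^k$ times a \emph{unit}, where $k$ is the first base-$q$ digit at which $m$ and $m'$ differ; so Lagrange interpolation at $u_0,\dots,u_n$ gives the exact exponent $w_q(n)$ without ever needing a valuation on all of $D$. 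Second, controlling $\bigcap_k\pi^kD$ cannot be the mechanism: that intersection can be a nonzero prime even when $\dim D$ is finite (e.g., a rank-two discrete valuation domain, whose maximal ideal is principal), yet the bound still holds there. So as written, your surjectivity argument rests on an unproven lemma together with a plan for proving it that would not succeed; replacing that paragraph with the interpolation argument (or with the citation the paper itself uses) closes the gap, and then your proof is complete and strictly stronger than the paper's, since the finite-dimensionality hypothesis is needed nowhere in it.
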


\begin{proof}
The homomorphism $\varphi$ is surjective by  \cite[Remark II.2.14]{cah} and the proof of \cite[Proposition II.3.14]{cah}.    Suppose that $D$ has finite Krull dimension.
For any positive integer $n$, let
$$A_n = D[X_{0}, X_{1}, X_{2}, \ldots, X_{n}]$$ and $$B_n =
D[X, F_q, F_q^{\circ 2}, \ldots, F_q^{\circ n}].$$ 
One has $\varphi(A_n) = B_n$, so $\varphi$ restricts to a surjective homomorphism $\varphi_n: A_n \longrightarrow B_n$.  Moreover, one has $\ker \varphi = \bigcup_n \ker \varphi_n$. 
It therefore suffices to show that $\ker \varphi_n = J_n$, where $J_n$ is the ideal in
$A$ generated by $X_{k}^q - X_{k} - \pi X_{k+1}$ for $0 \leq k \leq n-1$.  Clearly $J_n$ is
contained in $\ker \varphi_n$, so we have a surjective ring
homomoprhism
$$\psi_n: A_n^\prime \longrightarrow B_n,$$ where $A_n^\prime = A_n/J_n$.  We must show that
$\ker \psi_n = 0$.  Now, $A_n^\prime$ is integral over $D[X_n]$, and likewise $B_n$ is integral over $D[F_q^{\circ n}] \cong D[X]$.  Therefore by \cite[Exercise 11.6]{ati} both rings have Krull dimension $\dim D[X] \leq 1+ 2 \dim D < \infty$.  Thus, since $B_n$ is an integral domain, the kernel of $\psi_n$ must be a mimimal prime
ideal in $A_n^\prime$. But by Lemma \ref{nonzerodiv}, $\pi$ is a nonzerodivisor in $A_n^\prime$,
so the map $$A_n^\prime \longrightarrow A_n^\prime[\pi^{-1}] = D[\pi^{-1}][X_0]$$ is an inclusion of
rings.  Therefore $A_n^\prime$ is a domain, so the kernel of $\psi_n$, being a
minimal prime in $A^\prime_n$, is zero.
\end{proof}

 We do not know if the hypothesis in Proposition \ref{presentation} of the finite dimensionality of $D$ is necessary.

\begin{remark}
If $D$ is the ring of integers $\mathcal{O}_K$ for some finite extension of the field $\QQ_p$ of $p$-adic rational numbers, then for any Lubin-Tate formal group law $F \in D[[X,Y]]$ over $D$ and for any $a \in D$, there is a unique formal power series $$[a]_F(T) = \sum_{n = 1} c_n(a) T^n \in D[[T]]$$
such that $ [a]_F(F(X,Y)) = F([a]_F(X),[a]_F(Y))$ in $D[[X,Y]]$ and $c_1(a) = a$; moreover, for each $n$ one has $c_n(a) = f_n(a)$ for all $a \in D$ for a unique $f_n \in \Int(D)$, and $\deg f_n \leq n$ \cite{des}.  By \cite[Theorem 3.1]{des} one has $$\Int(D) = D[f_1, f_2, f_3, \ldots],$$ and in fact $\{f_{q^i}: i \geq 0\}$ is a minimal set of generators of $\Int(D)$ as a $D$-algebra, where $q$ is the cardinality of the residue field of $D$.   For example, if $K = \QQ_p$ and $F = X+Y+XY$, then $f_n = {X \choose n}$ for all $n$, and in that case a complete set of relations among the $f_n$ is known.  However, for general $K$ and $F$ we do not know a complete set of relations for the $f_n$.  Such a complete set of relations would provide an alternative $D$-algebra presentation for the ring $\Int(D)$ in this case.
\end{remark}

\section{Regular bases and characteristic and factorial ideals}\label{sec:3}

Let $D$ be an integral domain with quotient field $K$.  A {\it regular basis} of $\Int(D)$ is a $D$-module basis of $\Int(D)$ consisting of exactly one polynomial of each degree.  By \cite[Corollary 2.5]{cha}, if $D$ is a Krull domain, then $\Int(D)$ has a regular basis if and only if the product $\Pi_q$ of all height one prime ideals $\ppp$ of $D$ with $|D/\ppp| = q$ is a principal ideal for every prime power $q$.  In particular, $\Int(D)$ has a regular basis if $D$ is a unique factorization domain, since in that case every height one prime ideal of $D$ is principal.  In this section and the next we find a more general characterization for a much larger class of domains, including, for example, all TV PVMDs, using the $t$-closure operation on fractional ideals, described below.

A {\it fractional ideal} of $D$ is a $D$-submodule $I$ of $K$ such that $I^{-1} = (D :_K I)$ is nonzero, or equivalently such that $aI \subseteq D$ for some nonzero $a \in D$.  A {\it star operation} on $D$ is a closure operation $*$ on the partially ordered set $\mathcal{F}(D)$ of nonzero fractional ideals of $D$ such that $D^* = D$ and $I^* J^* \subseteq (IJ)^*$ for all $I,J \in \mathcal{F}(D)$.  Equivalently, a star operation on $D$ is a self-map $*$ of $\mathcal{F}(D)$ satisfying the following conditions for all $I,J \in \mathcal{F}(D)$ and all nonzero $a \in D$.
\begin{enumerate}
\item $I \subseteq I^* = (I^*)^*$, and $I \subseteq J \Longleftrightarrow I^* \subseteq J^*$.
\item $D^* = D$.
\item  $(aI)^* = aI^*$.
\end{enumerate}
For any star operation $*$ one also has the following.
\begin{enumerate}
\item[(4)] $(I^* J^*)^* = (IJ)^*$.
\item[(5)] $(I^*+J^*)^* = (I+J)^*$.
\item[(6)] $(I^* \cap J^*)^* = I^* \cap J^*$.
\end{enumerate}

We will make use of the following important star operations.  First, the {\it $d$-closure} star operation $d$ is the identity operation.  The {\it divisorial closure} star operation $v$, also known as the {\it $v$-operation}, acts by $v: I \longrightarrow I_v = (I^{-1})^{-1}$.  The {\it $t$-closure} star operation $t$ acts by $$t: I \longrightarrow I_t = \bigcup \{J_v: J \subseteq I \mbox{ and } J \mbox{ is finitely generated}\} .$$  Finally, the {\it $w$-closure} star operation $w$ acts by
$$w: I \longrightarrow I_w = \bigcup \{(I :_K J): J \subseteq D \textup{ and } J_t = D\}.$$  One has $d \leq w \leq t \leq v$, where one writes $*_1 \leq *_2$ if $I^{*_1} \subseteq I^{*_2}$ for all $I \in \mathcal{F}(D)$.  Under this partial ordering of star operations, $d$ is the smallest star operation on $D$; $v$ is the largest star operation on $D$; $t$ is the largest finite type star operation on $D$, where $*$ is of {\it finite type} if $I^* = \bigcup \{J^*: J \subseteq I \mbox{ and } J \mbox{ is finitely generated}\}$ for all $I$; and $w$ is the largest stable finite type star operation on $D$, where $*$ is {\it stable} if $(I \cap J)^* = I^* \cap J^*$ for all $I,J$. 

If $v = d$ on $\mathcal{F}(D)$, or equivalently, if $d$ is the only star operation on $D$, then $D$ is said to be {\it divisorial}.   For example, a Dedekind domain is equivalently a divisorial Krull domain.  If $t = v$ on $\mathcal{F}(D)$, or equivalently, if $v$ is of finite type, then $D$ is said to be a {\it TV domain}.  Any Noetherian or Krull domain is a TV domain.

Let $*$ be a star operation on $D$.  A fractional ideal $I$ of $D$ is said to be a {\it $*$-ideal} if $I^* = I$, and an {\it integral $*$-ideal} if $I$ is a $*$-ideal contained in $D$.  For example, every invertible fractional ideal of $D$ is a $*$-ideal.  An ideal of $D$ that is maximal among the proper integral $*$-ideals of $D$ is said to be {\it $*$-maximal}.  Every $*$-maximal ideal of $D$ is prime.  Moreover, if $*$ is of finite type, then every proper integral $*$-ideal of $D$ is contained in some $*$-maximal ideal of $D$.  We let $*\Max(D)$ denote the set of all $*$-maximal ideals of $D$, which is nonempty if $*$ is of finite type.

The operation $(I,J) \longmapsto (IJ)^*$ on $\mathcal{F}(D)$ is called {\it $*$-multiplication}.  The set of all $*$-ideals of $D$ is a partially ordered monoid under $*$-multiplication.  Its group of units is the group of $*$-invertible $*$-ideals, where a fractional ideal $I$ is {\it $*$-invertible} if $(IJ)^* = D$ for some fractional ideal $J$, in which case $(II^{-1})^* = D$ and $I^{-1} = J^*$ is the inverse of $I$ under $*$-multiplication. The {\it $*$-class group} $\Cl^*(D)$ of $D$ is the group of $*$-invertible $*$-ideals of $D$ under $*$-multiplication modulo the subgroup of principal fractional ideals of $D$.  Since any invertible ideal of $D$ is a $*$-invertible $*$-ideal, one has $\operatorname{Pic}(D) \subseteq \Cl^*(D)$, where $\operatorname{Pic}(D) = \Cl_d(D)$ is the Picard group of $D$.  Here we will be particularly interested in the $t$-class group (or $w$-class group) $\Cl_t(D) = \Cl_w(D)$, which in general carries more information than the classical object $\operatorname{Pic}(D)$.  Note that a $*$-invertible $*$-ideal is a $v$-invertible $v$-ideal and $\Cl^*(D)$ is a subgroup of $\Cl_v(D)$; and if $*$ is of finite type, then a $*$-invertible $*$-ideal is a $t$-invertible $t$-ideal and $\Cl^*(D)$ is a subgroup of $\Cl_t(D)$.

 A domain $D$ is a Krull domain if and only if every nonzero fractional ideal of $D$ is $t$-invertible.  A domain $D$ is a unique factorization domain if and only if the $t$-closure of every nonzero fractional ideal of $D$ is principal, if and only if $D$ is a Krull domain with trivial $t$-class group.   For any Krull domain $D$, one has $v = t =  w$ on $\mathcal{F}(D)$; a fractional ideal of $D$ is a $t$-ideal if and only if it is divisorial; an ideal of $D$ is $t$-maximal if and only if it is  a prime ideal of height one; and the $t$-class group $\Cl_t(D)$ is equal to the usual class group $\Cl(D)$ of $D$. 

 For any fractional ideal $I$ of a domain $D$ one has $t\Max(D) = w\Max(D)$ and $I_w = \bigcap_{\ppp \in t\Max(D)} I D_\ppp$, and in particular $D = \bigcap_{\ppp \in t\Max(D)} D_\ppp$.  In fact one has $A = \bigcap_{\ppp \in t\Max(D)} A_\ppp$ for any flat extension $A$ of $D$. Any weak Bourbaki associated prime of the $D$-module $K/D$, and more generally any Nortcott attached prime of $K/D$, is a prime $t$-ideal; and a prime ideal $\ppp$ of $D$ is a Northcott attached prime of $K/D$ if and only if $\ppp D_\ppp$ is a $t$-maximal ideal of $D_\ppp$ \cite[Lemma 2.2]{ell2}.  Moreover, if $D$ is a TV domain (or a Noetherian or Krull domain), then every $t$-maximal ideal of $D$ is a weak Bourbaki associated prime of $K/D$.  

A nonzero fractional ideal $I$ of $D$ is $t$-invertible if and only if $I$ is $w$-invertible, if and only if $I_t = J_t$ for some finitely generated fractional ideal $J$ of $D$ and $I_t D_\ppp$ is principal for every $t$-maximal ideal $\ppp$ of $D$.  If $D$ is a {\it Mori domain}, that is, if $D$ satisfies the ascending chain condition on integral $v$-ideals, then a nonzero fractional ideal $I$ of $D$ is $t$-invertible if and only if $I_v D_\ppp$ is principal for every weak Bourbaki associated prime $\ppp$ of $D$.  Any Noetherian or Krull domain is Mori, and any Mori domain is a TV domain.  A domain $D$ is said to be a {\it PVMD} if every finitely generated ideal of $D$ is $t$-invertible, which holds if and only if $D$ is integrally closed and $t = w$ on $\mathcal{F}(D)$, if and only if $D_\ppp$ is a valuation domain for every $t$-maximal ideal $\ppp$ of $D$.  A Krull domain is equivalently a Mori PVMD.

The reader is referred to \cite{gil} for proofs of the facts listed above.  

\begin{definition}
Let $D$ be an integral domain with quotient field $K$ and $n$ be a nonnegative integer.  The {\it norm} $N(\aaa)$ of an ideal $\aaa$ of $D$ is defined to be the cardinality $|D/\aaa|$ of the quotient ring $D/\aaa$.  
Let $\Pi_n(D)$ denote the ideal $$\Pi_n(D) = \bigcap_{{\ppp \in t\Max(D)} \atop {N(\ppp) = n}} \ppp$$ of $D$; that is, let $\Pi_n(D)$ denote the intersection of all $t$-maximal ideals $\ppp$ of $D$ of norm $n$ (which is equal to $D$ if $n$ is not a power of a prime). 
Following \cite[Chapter II]{cah} we let $\Int_n(D)$ denote the $D$-submodule $$\Int_n(D)  = \{f \in \Int(D): \deg f \leq n\}$$ of $\Int(D)$, and we let $\II_n(D)$ denote the $D$-submodule
$$\II_n(D) = \left\{f_n: f = \sum_{i = 0}^n f_i X^i \in \Int_n(D)\right\}$$ of $K$  consisting of the coefficient of $X^n$ for each polynomial $f \in \Int(D)$ of degree at most $n$.   By \cite[Proposition I.3.1]{cah} $\II_n(D)$ is a fractional ideal of $D$; it is called the {\it $n$th characteristic (fractional) ideal of $D$}.  Following \cite[Definition 1.2]{cha2} we let $$n!_D = \II_n(D)^{-1},$$
which is an integral $v$-ideal (hence $t$-ideal) of $D$ called the {\it $n$th factorial ideal of $D$}.
\end{definition}

We will write $\Pi_n = \Pi_n(D)$ and $\II_n = \II_n(D)$ when the domain $D$ is understood.   By \cite[Proposition 7.3]{ell3}, the ideal $(D[X]:_D \Int_n(D))$ of $D$ is a (nonzero) $v$-ideal, and therefore $\Int_n(D)$ lies between two free $D$-modules of rank $n+1$.   Note that $n!_D$ contains $(D[X]:_D \Int_n(D))$; moreover, equality holds, and also $\II_n = n!_D^{-1}$, if $D$ is a Krull domain or $\II_n$ is invertible for all $n$, by \cite[Remark 2.6]{cha} and \cite[Proposition II.1.7]{cah}.  We will show, more generally, that $n!_D = (D[X]:_D \Int_n(D))$ and $\II_n = n!_D^{-1}$ if $\II_n$ is $t$-invertible for all $n$, which holds, for example, if $D$ is a TV PVMD.  (See Theorem \ref{trivcor} and Corollary \ref{pvmdcor}.)  The following well-known result explains the significance of the characteristic ideals.

\begin{proposition}[{\cite[Proposition II.1.4]{cah}}]\label{charideal}
Let $D$ be an integral domain.  Then $\Int(D)$ has a regular basis if and only if the $n$th characteristic ideal $\II_n(D)$ of $D$ is principal for every $n$.  In fact, a set $\{f_0, f_1, f_2, \ldots\}$ of elements of $\Int(D)$ with $\deg f_n  = n$ for all $n$ is a regular basis of $\Int(D)$ if and only if $\II_n(D) = a_n D$ for all $n$, where $a_n$ is the leading coefficient of $f_n$.
\end{proposition}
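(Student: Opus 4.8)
The plan is to analyze the \textbf{leading-coefficient map} $\lambda_n \colon \Int_n(D) \longrightarrow \II_n(D)$ sending each $f = \sum_{i=0}^n f_i X^i$ to its top coefficient $f_n$. By the definition of $\II_n(D)$ this map is surjective, and its kernel is exactly $\Int_{n-1}(D)$, so for each $n$ there is a short exact sequence of $D$-modules
$$0 \longrightarrow \Int_{n-1}(D) \longrightarrow \Int_n(D) \stackrel{\lambda_n}{\longrightarrow} \II_n(D) \longrightarrow 0.$$
Rather than prove the two equivalences separately, I would establish the refined ``in fact'' statement, from which the first assertion follows immediately: a degree-graded family $\{f_0, f_1, f_2, \ldots\}$ with $\deg f_n = n$ and leading coefficients $a_n$ is a regular basis if and only if $\II_n(D) = a_n D$ for every $n$, and such a family exists precisely when every $\II_n(D)$ is principal.

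For the forward direction, suppose $\{f_0, f_1, \ldots\}$ is a regular basis with $\deg f_n = n$ and leading coefficient $a_n$. Since $a_n = \lambda_n(f_n) \in \II_n(D)$, one has $a_n D \subseteq \II_n(D)$. For the reverse containment, take $c \in \II_n(D)$, say $c = \lambda_n(g)$ for some $g \in \Int_n(D)$, and expand $g = \sum_{i} d_i f_i$ uniquely in the basis. Because the $f_i$ have pairwise distinct degrees and $D$ is a domain, the highest-index term cannot be cancelled, so the degree of $g$ equals the largest index $i$ with $d_i \neq 0$; as $\deg g \leq n$, only indices $i \leq n$ occur. Reading off the coefficient of $X^n$ then gives $c = d_n a_n \in a_n D$, whence $\II_n(D) = a_n D$ is principal.

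For the converse, suppose $\II_n(D) = a_n D$ for every $n$. Since $\II_n(D)$ is a nonzero fractional ideal, $a_n \neq 0$, so $a_n$ is realized as the leading coefficient of some $f_n \in \Int_n(D)$ of degree exactly $n$; fix such an $f_n$. To see that $\{f_n\}$ spans $\Int(D)$ over $D$, I would argue by downward induction on degree: given $f \in \Int(D)$ of degree $m$, its leading coefficient lies in $\II_m(D) = a_m D$, say it equals $d_m a_m$, and then $f - d_m f_m \in \Int_{m-1}(D)$ has strictly smaller degree; iterating expresses $f$ as a $D$-linear combination of $f_0, \ldots, f_m$. Linear independence follows from the same distinct-degrees observation: in any nontrivial relation $\sum d_i f_i = 0$, the top term $d_N f_N$ has leading coefficient $d_N a_N \neq 0$, a contradiction. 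Hence $\{f_n\}$ is a regular basis.

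The only subtle point---the main obstacle---is the repeated use of the fact that a $D$-linear combination of polynomials of distinct degrees has degree equal to the largest degree appearing with a nonzero coefficient. This is where the hypothesis that $D$ is an integral domain is essential, since it guarantees $d_i a_i \neq 0$ whenever $d_i \neq 0$, and it is what makes the leading-coefficient bookkeeping in both directions rigorous; everything else is formal manipulation of the exact sequence above.
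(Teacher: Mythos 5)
Your proof is correct, and it coincides with the standard argument for this result: the paper itself gives no proof, citing \cite[Proposition II.1.4]{cah}, whose proof is exactly this leading-coefficient analysis via the exact sequence $0 \to \Int_{n-1}(D) \to \Int_n(D) \to \II_n(D) \to 0$ together with the distinct-degrees observation. All steps check out, including the subtle points that $a_n \neq 0$ (because $D = \II_0(D) \subseteq \II_n(D)$) and that $a_n$ is realized as the top coefficient of a polynomial of exact degree $n$.
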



Next we examine properties of the characteristic ideals $\II_n(D)$ and some consequences of the assumption that they are $t$-invertible.

\begin{lemma}\label{tinv}
If a nonzero fractional ideal $I$ of an integral domain $D$ is $t$-invertible, then
$D_\ppp = II^{-1} D_\ppp$ and $(ID_\ppp)^{-1} = I^{-1} D_\ppp$ for every $t$-maximal ideal $\ppp$ of $D$.
\end{lemma}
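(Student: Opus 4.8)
The plan is to prove both identities locally at a fixed $t$-maximal ideal $\ppp$, treating the equality $D_\ppp = II^{-1}D_\ppp$ as the essential statement and deducing the description of $(ID_\ppp)^{-1}$ from it by a purely formal manipulation. The only real input will be the $t$-invertibility of $I$ together with the fact, recalled in Section \ref{sec:3}, that $t$ is of finite type; no deeper structure of $D$ is needed.

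First I would establish $D_\ppp = II^{-1}D_\ppp$. Since $I$ is $t$-invertible, $(II^{-1})_t = D$, and since $t$ is of finite type we have $(II^{-1})_t = \bigcup\{J_t : J \subseteq II^{-1},\ J \text{ finitely generated}\}$, so that $1 \in J_t$ for some finitely generated $J \subseteq II^{-1}$. As $J \subseteq II^{-1} \subseteq D$ gives $J_t \subseteq D_t = D$, while $1 \in J_t$ forces $D \subseteq J_t$, we conclude $J_t = D$. Now a $t$-maximal ideal $\ppp$ is itself a $t$-ideal, so $J \subseteq \ppp$ would give $D = J_t \subseteq \ppp_t = \ppp$, a contradiction; hence $J$ contains an element $s \notin \ppp$. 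Then $s$ is a unit of $D_\ppp$ lying in $II^{-1}$, so $1 = s \cdot s^{-1} \in II^{-1}D_\ppp$, and therefore $D_\ppp \subseteq II^{-1}D_\ppp$; the reverse inclusion is immediate from $II^{-1} \subseteq D$.

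Granting this, the identity $(ID_\ppp)^{-1} = I^{-1}D_\ppp$ follows formally. The inclusion $I^{-1}D_\ppp \subseteq (ID_\ppp)^{-1}$ holds because any $b \in I^{-1}$ satisfies $bID_\ppp \subseteq D_\ppp$, and $(ID_\ppp)^{-1}$ is a $D_\ppp$-module. For the reverse, I would take $x \in (ID_\ppp)^{-1}$, so that $xI \subseteq D_\ppp$; multiplying by $I^{-1}$ gives $xII^{-1} \subseteq I^{-1}D_\ppp$, and applying the first identity $D_\ppp = II^{-1}D_\ppp$ yields $xD_\ppp = xII^{-1}D_\ppp \subseteq I^{-1}D_\ppp$, whence $x \in I^{-1}D_\ppp$.

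The main obstacle, such as it is, is the first step: extracting a finitely generated $J \subseteq II^{-1}$ with $J_t = D$ and checking that it escapes every $t$-maximal ideal. Once the finite-type property of $t$ supplies this witness, everything else is routine localization bookkeeping.
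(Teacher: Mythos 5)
Your proof is correct, and it takes a genuinely more self-contained route than the paper's. The paper simply quotes two facts from its Section \ref{sec:3} toolkit: that $I$ is $t$-invertible if and only if it is $w$-invertible, and that $J_w = \bigcap_{\ppp \in t\Max(D)} J D_\ppp$ for any nonzero fractional ideal $J$. From these, $D = (II^{-1})_w = \bigcap_{\ppp \in t\Max(D)} II^{-1}D_\ppp$ immediately forces $D_\ppp = II^{-1}D_\ppp$ for each $t$-maximal $\ppp$, and the second identity $(ID_\ppp)^{-1} = I^{-1}D_\ppp$ is then read off from the factorization $D_\ppp = (ID_\ppp)(I^{-1}D_\ppp)$, exactly as in your formal manipulation. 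You instead bypass the $w$-operation entirely: using only the finite-type property of $t$, you extract a finitely generated $J \subseteq II^{-1}$ with $J_t = D$, observe that such a $J$ cannot lie in any $t$-maximal ideal (a $t$-maximal ideal being a prime $t$-ideal), and thereby produce a unit of $D_\ppp$ inside $II^{-1}$. This is in effect a direct proof of the one consequence of $w$-invertibility that the lemma needs---indeed it is essentially the standard argument underlying the facts the paper cites---so what your version buys is independence from the $w$-machinery, at the cost of a few extra lines; the paper's version is shorter given its stated background. All of your individual steps check out, including the small but necessary points that $J_t \subseteq D_t = D$ together with $1 \in J_t$ gives $J_t = D$, and that $(ID_\ppp)^{-1}$ is a $D_\ppp$-module in the forward inclusion of the second identity.
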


\begin{proof}
It is well-known that $I$ is $t$-invertible if and only if $I$ is $w$-invertible.
Suppose that $I$ is $w$-invertible, so $$D = (II^{-1})_w = \bigcap_{\ppp \in t\Max(D)} II^{-1}D_\ppp.$$ It follows that
$$D_\ppp =  II^{-1}D_\ppp = (ID_\ppp)( I^{-1}D_\ppp),$$
and therefore $(ID_\ppp)^{-1} = I^{-1} D_\ppp$, for all $\ppp \in t\Max(D)$.  
\end{proof}


For any multiplicative subset $S$ of a domain $D$, one has $S^{-1}\Int(D) \subseteq \Int(S^{-1}D)$, by \cite[Proposition I.2.2]{cah}; however, by \cite[Example VI.4.15]{cah} the reverse inclusion need not hold, even if $D$ is locally a discrete valuation ring (DVR).  Following \cite{ell2}, we say that a domain $D$ is {\it polynomially L-regular} if $S^{-1}\Int(D) = \Int(S^{-1}D)$ for every multiplicative subset $S$ of $D$.  For example,  by \cite[Propositon 2.4]{ell2} any TV domain (hence any Mori domain, hence any Noetherian domain) is polynomially L-regular.  Equivalent conditions for a domain to be polynomially L-regular are given in \cite[Proposition 2.3]{ell2}.

\begin{theorem}\label{factprop}
Let $D$ be an integral domain.
\begin{enumerate}
\item $D = \II_0(D) = \II_1(D) \subseteq \II_2(D) \subseteq \II_3(D) \subseteq \cdots$ and $\II_k(D) \II_l(D) \subseteq \II_{k+l}(D)$ for all $k,l$.
\item $D = 0!_D = 1!_D \supseteq 2!_D \supseteq 3!_D \supseteq \cdots$ and $(k+l)!_D \subseteq (k!_D^{-1} l!_D^{-1})^{-1}$ for all $k,l$.
\item For any multiplicative subset $S$ of $D$, one has $S^{-1}\Int(D) = \Int(S^{-1}D)$ if and only if $S^{-1}\II_n(D) = \II_n(S^{-1}D)$ for every nonnegative integer $n$.
\item If $\ppp$ is a prime ideal of $D$ that is not $t$-maximal of finite norm, then $\Int(D)_\ppp = \Int(D_\ppp) = D_\ppp[X]$ and $\II_n(D)_\ppp = \II_n(D_\ppp) = D_\ppp$ for every nonnegative integer $n$.
\item $\II_n(D)$ is a $w$-ideal and $n!_D$ is a $v$-ideal for every nonnegative integer $n$.
\item  If $\II_n(D)$ is $t$-invertible, where $n$ is a nonnegative integer, then $\II_n(D)$ is a $v$-ideal and $\II_n(D) = n!_D^{-1}$.
\item Suppose that $\II_k(D)$ is $t$-invertible for all $k \leq n$.
\begin{enumerate}
\item For any $f \in \Int_n(D)$, all of the coefficients of $f$ lie in $\II_n(D)$.
\item  $n!_D = (D[X]:_D \Int_n(D))$.
\item $(k+l)!_D \subseteq (k!_D l!_D)_t$ for all $k,l \leq n$.
\end{enumerate}
\item  If $D$ is polynomially L-regular, then one has the following.
\begin{enumerate}
\item $S^{-1}\II_n(D) = \II_n(S^{-1}D)$ for every $n$ and every multiplicative subset $S$ of $D$.
\item  If $\II_n(D)$ is $t$-invertible, where $n$ is a nonnegative integer, then $n!_D D_\ppp = n!_{D_\ppp}$ for every $t$-maximal ideal $\ppp$ of $D$, and $\II_n(D_\ppp)$ is principal for every prime ideal $\ppp$ of $D$.
\item If $\II_n(D)$ is $t$-invertible for all $n$, then  $\Int(D_\ppp)$ has a regular basis for every prime ideal $\ppp$ of $D$ and the $D$-module $\Int(D)$ is locally free.
\end{enumerate}
\end{enumerate}
\end{theorem}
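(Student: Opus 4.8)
The plan is to treat the eight parts in the order of their logical dependencies, deriving most of them from a single local computation in part (4). Parts (1)--(3) are formal. For (1) I would use only that $\Int(D)$ is a ring containing $X$: if $a$ is the coefficient of $X^k$ of some $f\in\Int_n(D)$ of degree $\le k$, then $a$ is the coefficient of $X^{k+1}$ of $Xf$, giving $\II_k(D)\subseteq\II_{k+1}(D)$; multiplying a degree $\le k$ element by a degree $\le l$ element gives $\II_k\II_l\subseteq\II_{k+l}$; and $\Int_0(D)=\Int_1(D)$ (only constants and $X$) forces $\II_0=\II_1=D$. Part (2) is obtained from (1) by applying $(-)^{-1}$, using $k!_D^{-1}=(\II_k)_v$ and $(I_vJ_v)^{-1}=(IJ)^{-1}$. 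For (3), the only nontrivial (backward) direction is a descending induction on degree: given $g\in\Int(S^{-1}D)$ of degree $n$, its leading coefficient lies in $\II_n(S^{-1}D)=S^{-1}\II_n(D)$, so after subtracting $s^{-1}f$ for a suitable $f\in\Int_n(D)$ we lower the degree while staying in $\Int(S^{-1}D)$, eventually landing in $S^{-1}\Int(D)$.

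Part (4) is the geometric heart of the theorem, and I expect it to be the main obstacle. By Proposition \ref{charideal}, $\II_m(D_\ppp)=D_\ppp$ for all $m$ is equivalent to $\Int(D_\ppp)=D_\ppp[X]$; once this is known, $\II_n(D)_\ppp=\II_n(D_\ppp)=D_\ppp$ and $\Int(D)_\ppp=\Int(D_\ppp)=D_\ppp[X]$ follow by squeezing in the general inclusions $D_\ppp[X]\subseteq\Int(D)_\ppp\subseteq\Int(D_\ppp)$ and $D_\ppp\subseteq\II_n(D)_\ppp\subseteq\II_n(D_\ppp)$. So everything reduces to proving $\Int(D_\ppp)=D_\ppp[X]$ when $\ppp$ is not $t$-maximal of finite norm, and I would split into two cases. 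If $N(\ppp)=\infty$ then $D_\ppp/\ppp D_\ppp$ is infinite, so for $f\in\Int(D_\ppp)$ of degree $m$ I choose $m+1$ elements of $D_\ppp$ pairwise incongruent modulo $\ppp D_\ppp$; the Vandermonde determinant in their residues is a unit, so Lagrange interpolation forces $f\in D_\ppp[X]$. If instead $\ppp$ is not $t$-maximal, then by \cite[Lemma 2.2]{ell2} $\ppp$ is not a Northcott attached prime of $K/D$ and $\ppp D_\ppp$ is not $t$-maximal in $D_\ppp$; here I would invoke the attached-prime description of the denominators of $K/D$ from \cite{ell2} to conclude that no integer-valued polynomial over $D_\ppp$ can have a non-integral coefficient. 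This second case is exactly where the $t$-structure input is indispensable.

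For part (5), that $n!_D=\II_n(D)^{-1}$ is a $v$-ideal is immediate, since every ideal $(D:_K I)$ is divisorial. That $\II_n(D)$ is a $w$-ideal I would prove by showing $\II_n(D)=\bigcap_{\ppp\in t\Max(D)}\II_n(D)D_\ppp$. First, $\Int_n(D)$ is a $w$-module: from $D=\bigcap_\ppp D_\ppp$ one gets $\Int(D)=\bigcap_{\ppp\in t\Max(D)}\Int(D_\ppp)$, and squeezing with $\Int(D)\subseteq\bigcap_\ppp\Int(D)D_\ppp\subseteq\bigcap_\ppp\Int(D_\ppp)$ gives $\Int(D)=\bigcap_\ppp\Int(D)D_\ppp$, hence the same for $\Int_n(D)$ after truncating in degree. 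Localizing the leading-coefficient sequence $0\to\Int_{n-1}(D)\to\Int_n(D)\to\II_n(D)\to 0$ at each $t$-maximal $\ppp$ and intersecting then identifies $(\II_n)_w$ with the cokernel of a map whose surjectivity is the sole issue: given $b\in\bigcap_\ppp\II_n(D)D_\ppp$ I must produce a single degree-$n$ element of $\Int_n(D)$ with leading coefficient $b$. I would construct it by interpolation at finitely many elements of $D$, using local solvability at each $\ppp$ together with part (4), which eliminates the infinite-norm and non-$t$-maximal primes from consideration. Part (6) is then routine: a $t$-invertible ideal is divisorial, so $\II_n(D)=(\II_n(D))_v=(n!_D)^{-1}$.

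For (7) I would reduce (b) and (c) to (a). The inclusion $(D[X]:_D\Int_n(D))\subseteq n!_D$ always holds, since $d\Int_n(D)\subseteq D[X]$ forces $d\,\II_n(D)\subseteq D$; and (a) supplies the reverse $n!_D\,\Int_n(D)\subseteq D[X]$, giving (b). Statement (c) follows from $\II_k\II_l\subseteq\II_{k+l}$ and (6) by taking $t$-inverses in $\Cl_t(D)$. For (a) I would use $D[X]=\bigcap_\ppp D_\ppp[X]$ to reduce to each $t$-maximal $\ppp$, where by Lemma \ref{tinv} the $t$-invertibility of the $\II_k(D)$ makes $\II_k(D)D_\ppp$ and $n!_DD_\ppp$ principal; the graded pieces of $\Int_n(D)_\ppp$ are then free of rank one, and the crux is to exhibit a degree-ordered $D_\ppp$-basis of $\Int_n(D)_\ppp$ whose members have all coefficients in the corresponding $\II_k(D)D_\ppp$ (the construction underlying Theorem \ref{regbasisalg}), which bounds every coefficient of every $f\in\Int_n(D)$ by $\II_n(D)D_\ppp$, hence by $\II_n(D)$ via (5). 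Part (8) is then a corollary: polynomial L-regularity gives (8a) through part (3), and Lemma \ref{tinv} yields $n!_DD_\ppp=n!_{D_\ppp}$ and the principality of $\II_n(D_\ppp)$, whence Proposition \ref{charideal} gives the regular bases and local freeness of (8c). The two genuinely hard points, as indicated, are the non-$t$-maximal case of (4) and the controlled local basis needed in (5) and (7)(a).
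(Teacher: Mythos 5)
Your treatment of (1)--(4), (6), and (8) is sound and essentially the paper's: in particular, for (4) the paper also simply cites \cite[Lemma 2.2]{ell2} to get $\Int(D)_\ppp = \Int(D_\ppp) = D_\ppp[X]$ and then applies (3), so your two-case elaboration (Vandermonde interpolation for infinite norm, attached primes for the non-$t$-maximal case) is consistent with it. The genuine gap lies at the two places you yourself flag as the crux, in (5) and (7)(a), and in both places the difficulty is not just that the step is hard but that, as you set it up, it is circular. In (5), producing a degree-$n$ element of $\Int_n(D)$ with prescribed leading coefficient $b \in \bigcap_{\ppp \in t\Max(D)} \II_n(D) D_\ppp$ \emph{is} the assertion $\II_n(D)_w \subseteq \II_n(D)$; you have reduced the statement to itself, and interpolation at finitely many elements of $D$ cannot deliver it, since membership in $\Int(D)$ constrains values at \emph{every} element of $D$, i.e.\ integrality at possibly infinitely many $t$-maximal primes simultaneously. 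The paper's proof constructs no polynomial at all: since $\II_n(D)$ is a $D$-submodule of $K$, one has $\II_n(D) = \bigcap_{\ppp \in \Spec(D)} \II_n(D)_\ppp$; by (4), $\II_n(D)_\qqq = D_\qqq$ for every prime $\qqq \notin t\Max(D)$, and since $D_\qqq$ is a flat extension of $D$ one has $D_\qqq = \bigcap_{\ppp \in t\Max(D)} (D_\qqq)_\ppp$, whence $\II_n(D)_w = \bigcap_{\ppp \in t\Max(D)} \II_n(D)_\ppp \subseteq \bigcap_{\qqq \notin t\Max(D)} D_\qqq$ and therefore $\II_n(D)_w = \bigcap_{\ppp \in \Spec(D)} \II_n(D)_\ppp = \II_n(D)$.

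The same circularity recurs in your plan for (7)(a): a degree-ordered $D_\ppp$-basis of $\Int_n(D)_\ppp$ whose members have all coefficients in the corresponding $\II_k(D)D_\ppp$ is exactly the local form of (7)(a). What $t$-invertibility and Lemma \ref{tinv} actually give you at each $t$-maximal $\ppp$ is principality of the leading-coefficient modules $\II_k(D)D_\ppp$, hence \emph{some} degree-ordered basis; they say nothing about the lower-order coefficients of its members, which is the whole content. Moreover, the construction you invoke, Theorem \ref{regbasisalg}, is unavailable under the hypotheses of (7): it requires conditions $(\mathcal{C}1)$--$(\mathcal{C}3)$ (locally, a principal maximal ideal of finite norm), and even identifying $\II_k(D)_\ppp$ with $\II_k(D_\ppp)$ already requires polynomial L-regularity, which enters only in part (8). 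The paper's actual argument for (7)(a) is a short global induction that your proof would need to supply, either globally or localized: for $f = \sum_{i=0}^n f_i X^i \in \Int_n(D)$ and any $a$ in the integral ideal $I = \II_{n-1}\II_n^{-1}$, choose $g \in \Int_{n-1}(D)$ with top coefficient $af_n$, apply the inductive hypothesis to $g$ and to $h = af - Xg \in \Int_{n-1}(D)$ to conclude $If_i \subseteq \II_{n-1}$ for all $i$, and then use $w$-invertibility of $I$ to get $f_i D = (f_i II^{-1})_w \subseteq \left(\II_{n-1}(\II_{n-1}\II_n^{-1})^{-1}\right)_w = (\II_n)_w = \II_n$ --- note this is also where (5) is used. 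Your derivations of (7)(b),(c) from (7)(a), and of (8) from (3), Lemma \ref{tinv}, and Proposition \ref{charideal}, are correct and match the paper.
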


\begin{proof}
Statements (1) and (2) are clear.  Suppose that $S^{-1}\II_n(D) = \II_n(S^{-1}D)$ for every nonnegative integer $n$.   Let $f \in \Int(S^{-1}D)$, and let $n = \deg f$.  Since $f_n \in \II_n(S^{-1}D) = S^{-1}\II_n(D)$, one has $f_n = g_n/u$, where $g = \sum_{i = 0}^n g_iX^i \in \Int_n(D)$ and $u \in S$.  Then $f - g/u \in \Int_{n-1}(S^{-1}D)$, so by induction on $n$ we may assume $f-g/u \in S^{-1}\Int_{n-1}(D)$.  Therefore $f \in S^{-1}\Int_n(D)$.  Thus we have $S^{-1}\Int(D) = \Int(S^{-1}D)$, assuming that $S^{-1}\II_n(D) = \II_n(S^{-1}D)$ for every nonnegative integer $n$.  Since the converse is clear, this proves (3).

Next, let $\ppp$ be a prime ideal of $D$ that is not $t$-maximal of finite norm. 
Then $\Int(D)_\ppp = \Int(D_\ppp) = D_\ppp[X]$ by \cite[Lemma 2.2]{ell2}, whence $\II_n(D)_\ppp = \II_n(D_\ppp) = D_\ppp$ for all $n$ by statement (3).   This proves (4).

Now let $\mathcal{S} = \operatorname{Spec}(D) \backslash t\Max(D)$.  One has
\begin{eqnarray*}
\II_n(D)_w & = & \bigcap_{\ppp \in t\Max(D)} \II_n(D)_\ppp \\
 & \subseteq & \bigcap_{\qqq \in \mathcal{S}} \bigcap_{\ppp \in t\Max(D)}  (\II_n(D)_\qqq)_\ppp  \\
 & = &  \bigcap_{\qqq \in \mathcal{S}} \bigcap_{\ppp \in t\Max(D)} (D_\qqq)_\ppp  \\
 & = &  \bigcap_{\qqq \in \mathcal{S}} D_\qqq. 
\end{eqnarray*}
Therefore $$ \II_n(D)_w = \left( \bigcap_{\ppp \in t\Max(D)} \II_n(D)_\ppp \right) \cap \left(\bigcap_{\qqq \in \mathcal{S}} D_\qqq \right)  = \bigcap_{\ppp \in \operatorname{Spec}(D)} \II_n(D)_\ppp = \II_n(D).$$
Moreover, one has $(n!_D)_v = ((\II_n(D)^{-1})_v = \II_n(D)^{-1} = n!_D$.
This proves (5).

Next, suppose that $\II_n(D)$ is $t$-invertible, where $n$ is a nonnegative integer.  Then $\II_n(D)$ is a $w$-invertible $w$-ideal, hence a $v$-ideal.  Therefore $\II_n(D) = \II_n(D)_v = n!_D^{-1}$.    This proves (6).

We prove (7)(a) by induction.  Suppose that $\II_k = \II_k(D)$ is $t$-invertible, hence $w$-invertible, for all $k \leq n$.   If $f \in \Int(D)$ is constant, then $f \in D = \II_0$.  Let $f = \sum_{i = 0}^n f_i X^i \in \Int_n(D)$.  Consider the fractional ideal $I = \II_{n-1}\II_n^{-1}$ of $D$.  Since $\II_{n-1} \subseteq \II_n$, one has $I \subseteq D$.  Let $a \in I$.  One has $a f_n \in I \II_n \subseteq \II_{n-1}$, so there exists $g = \sum_{i = 0}^{n-1} g_i X^i \in \Int_{n-1}(D)$ with $a f_n  = g_{n-1} \in \II_{n-1}$.  The polynomial $h = a f - Xg$  lies in $\Int(D)$ and has degree at most $n-1$.  By the induction hypothesis, one has $g_{i-1} \in \II_{n-1}$ and $h_i =  a f_i -g_{i-1} \in \II_{n-1}$, and therefore $a f_i \in \II_{n-1}$, for $0 \leq i \leq n-1$.  Therefore $I f_i \subseteq \II_{n-1}$ for all $i \leq n$.  Since $I$ is $w$-invertible (being the product of two $w$-invertible fractional ideals), we have
$$f_i D = (f_i II^{-1})_w \subseteq (\II_{n-1}(\II_{n-1}\II_n^{-1})^{-1})_w = (\II_n)_w = \II_n,$$
and therefore $f_i \in \II_n$, for all $i \leq n$.    Thus all of the coefficients of $f$ lie in $\II_n$.  This proves (7)(a).  It follows that, for any $r \in n!_D$, one has $r \II_n \subseteq D$, whence $r \Int_n(D) \subseteq D[X]$ and so $r \in (D[X]:_D \Int_n(D))$.  Therefore $n!_D = (D[X]:_D \Int_n(D))$.  This proves (7)(b).  To prove (7)(c), note that by (2) one has $(k+l)!_D \subseteq (k!_D^{-1}l!_D^{-1})^{-1} = (k!_D l!_D)_t.$

Suppose now that $D$ is polynomially L-regular.  Statement (8)(a) follows from statement (3).  Suppose that $\II_n(D)$ is $t$-invertible, and let $\ppp$ be a prime ideal of $D$.  If $\ppp$ is $t$-maximal, then $(\II_n(D))_t D_\ppp = \II_n(D_\ppp)$ is principal, and by Lemma \ref{tinv} one also has $$n!_{D_\ppp} = \II_n(D_\ppp)^{-1} = (\II_n(D)D_\ppp)^{-1} = \II_n(D)^{-1}D_\ppp =  n!_D D_\ppp.$$ 
If $\ppp$ is not $t$-maximal, then $\II_n(D_\ppp) = D_\ppp$ is again principal. This proves (8)(b).  Finally, (8)(c) follows Proposition \ref{charideal} and statements (8)(a) and (8)(b).
\end{proof}

The {\it P\'olya-Ostrowski group of $D$} is defined for any Dedekind domain $D$ in \cite[Section II.3]{cah} and more generally for any Krull domain $D$ in \cite{cha}.   We generalize that definition as follows.

\begin{definition}
Let $D$ be an integral domain such that $\II_n(D)$ is $t$-invertible for all nonnegative integers $n$.  The {\it P\'olya-Ostrowski group $\PO(D)$ of $D$} is the subgroup of the $t$-class group $\Cl_t(D)$ generated by (the image in $\Cl_t(D)$ of) the $t$-invertible $t$-ideals $\II_n(D)$ for all $n$. 
\end{definition}  


With this definition, Theorem \ref{factprop} yields the following.

\begin{theorem}\label{trivcor}
Let $D$ be an integral domain such that $\II_n(D)$ is $t$-invertible for all $n$.  Then for any $f \in \Int_n(D)$, all of the coefficients of $f$ lie in $\II_n(D)$; one has $n!_D = (D[X]:_D \Int_n(D))$ and $\II_n(D) = n!_D^{-1}$; the P\'olya-Ostrowski group $\PO(D)$ is generated by the factorial ideals $n!_D$ for all $n$; and the following conditions are equivalent.
\begin{enumerate}
\item $\Int(D)$ has a regular basis.
\item $\II_n(D)$ is principal for every nonnegative integer $n$.
\item $n!_D$ is principal for every nonnegative integer $n$.
\item $\PO(D)$ is trivial.
\end{enumerate}
Moreover, if $D$ is polynomially L-regular, then $\Int(D_\ppp)$ has a regular basis for every prime ideal $\ppp$ of $D$ and the $D$-module $\Int(D)$ is locally free. 
\end{theorem}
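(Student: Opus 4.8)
The plan is to derive every assertion directly from Theorem \ref{factprop} and Proposition \ref{charideal}, since the standing hypothesis that $\II_n(D)$ is $t$-invertible for all $n$ places us squarely in the situation those results cover. First I would dispose of the two pointwise claims. The statement that every coefficient of an $f \in \Int_n(D)$ lies in $\II_n(D)$ is exactly Theorem \ref{factprop}(7)(a), whose hypothesis---that $\II_k(D)$ be $t$-invertible for all $k \leq n$---is subsumed by ours. Likewise $n!_D = (D[X]:_D \Int_n(D))$ is Theorem \ref{factprop}(7)(b), and the identity $\II_n(D) = n!_D^{-1}$, together with the fact that $\II_n(D)$ is then a $v$-ideal, is Theorem \ref{factprop}(6). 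No new work is needed here.

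Next I would handle the P\'olya--Ostrowski group. By definition $\PO(D)$ is the subgroup of $\Cl_t(D)$ generated by the classes of the $\II_n(D)$. Using the already-established relation $\II_n(D) = n!_D^{-1}$, the class of $\II_n(D)$ in $\Cl_t(D)$ is the inverse of the class of $n!_D$; since a subgroup is closed under inversion, the two families generate the same subgroup, so $\PO(D)$ is equally generated by the factorial ideals $n!_D$ for all $n$.

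For the equivalence of conditions (1)--(4) I would translate principality back and forth through the relation $\II_n(D) = n!_D^{-1}$ and the characterization of triviality in a class group. The equivalence (1) $\Leftrightarrow$ (2) is precisely Proposition \ref{charideal}. For (2) $\Leftrightarrow$ (3), I would note that if $\II_n(D) = aD$ is principal then $n!_D = a^{-1}D$ is principal; conversely, because $\II_n(D)$ is a $v$-ideal, hence $\II_n(D) = (n!_D^{-1})^{-1}$, principality of $n!_D$ forces principality of $\II_n(D)$. Finally, $\II_n(D)$ being a $t$-invertible $t$-ideal, its class in $\Cl_t(D)$ is trivial if and only if it is principal; hence $\PO(D)$ is trivial exactly when each generator $\II_n(D)$ is principal, giving (2) $\Leftrightarrow$ (4).

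The closing ``moreover'' assertion, that polynomial L-regularity forces $\Int(D_\ppp)$ to have a regular basis at every prime ideal $\ppp$ of $D$ and $\Int(D)$ to be locally free, is the verbatim conclusion of Theorem \ref{factprop}(8)(c). The one spot that warrants care, rather than any serious obstacle, is ensuring the back-and-forth between principality of $\II_n(D)$ and of $n!_D$ is genuinely reversible: this is where the $v$-ideal property from Theorem \ref{factprop}(6) is indispensable, since without the identity $\II_n(D) = (n!_D^{-1})^{-1}$ one could not recover $\II_n(D)$ from its inverse, and the equivalences (2) $\Leftrightarrow$ (3) $\Leftrightarrow$ (4) would not close up.
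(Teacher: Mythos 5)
Your proposal is correct and takes essentially the same route as the paper, which proves Theorem \ref{trivcor} exactly as you do: by reading off the coefficient claim, $n!_D = (D[X]:_D \Int_n(D))$, and $\II_n(D) = n!_D^{-1}$ from Theorem \ref{factprop}(6) and (7), getting the equivalences from Proposition \ref{charideal} plus the fact that a $t$-invertible $t$-ideal has trivial class in $\Cl_t(D)$ if and only if it is principal, and quoting Theorem \ref{factprop}(8)(c) for the final assertion. One cosmetic slip: the identity you invoke twice should read $\II_n(D) = (\II_n(D)^{-1})^{-1} = n!_D^{-1}$ rather than $\II_n(D) = (n!_D^{-1})^{-1}$ (the latter is the $v$-closure of $n!_D$, which equals $n!_D$ itself by Theorem \ref{factprop}(5)), though your surrounding argument plainly uses the correct relation, so the logic is unaffected.
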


Since every nonzero fractional ideal of a Krull domain is $t$-invertible, the above corollary generalizes the same result already known for Krull domains.  In Theorem \ref{equivthm}  of the next section we will show that $\II_n(D)$ is $t$-invertible for all $n$, and therefore the P\'olya-Ostrowski group $\PO(D)$ is defined, for a much larger class of domains $D$, including, for example, all TV PVMDs.


\section{The P\'olya-Ostrowski group}\label{sec:4}

In this section we use the results of the previous section to generalize the results of \cite[Section II.3]{cah} on Dedekind domains to a much larger class of domains, including, for example, all TV PVMDs.  For the remainder this paper we will be interested in the following conditions on an integral domain $D$.
\begin{enumerate}
\item[$(\mathcal{C}1)$] $D$ is polynomially L-regular.
\item[$(\mathcal{C}2)$] For any nonnegative integer $n$ there exist only finitely many $t$-maximal ideals $\ppp$ of $D$ with $N(\ppp) \leq n$.
\item[$(\mathcal{C}3)$] $\Pi_q$ is principal for every prime power $q$.\item[$(\mathcal{C}4)$] Every $t$-maximal ideal of $D$ of finite norm has finite height.
\item[$(\mathcal{C}5)$]  Every $t$-maximal ideal of $D$ of finite norm is $t$-invertible.
\end{enumerate}
Note that $(\mathcal{C}2)$ implies that $$\Pi_n = \prod_{{\ppp \in t\Max(D)} \atop {N(\ppp) = n}} \ppp,$$ and therefore $\Pi_n$ is the unique ideal of $D$ such that, for any prime ideal $\ppp$ of $D$, one has $\Pi_n D_\ppp = \ppp D_\ppp$ if $\ppp$ is $t$-maximal of norm $n$ and $\Pi_n D_\ppp = D_\ppp$ otherwise.

Following \cite[Chapter II]{cah}, for any nonnegative integer $n$ and any integer $k> 1$ we let 
$$w_k(n) = \sum_{i = 1}^\infty \left\lfloor \frac{n}{k^i} \right\rfloor.$$
Alternatively, by \cite[Exercise II.8 and Lemma II.2.4]{cah} one has
$$w_k(n) = \frac{n-s}{k-1} = \sum_{i = 1}^n v_k(i),$$
where $s$ is the sum of the digits of the $k$-adic expansion of $n$ and $v_k(i)$ for any positive integer $i$ is the largest nonnegative integer $t$ such that $k^t$ divides $i$.

\begin{lemma}\label{prinlemma}
Let $D$ be a local domain with principal maximal ideal $\ppp$ of finite norm $q$.  Then one has $\Pi_q = \ppp$ and $\Pi_n = D$ if $n \neq q$, and $\II_n = \left(\ppp^{w_q(n)}\right)^{-1} = \left(\ppp^{-1}\right)^{w_q(n)}$ for all $n$.  In particular, $\Int(D)$ has a regular basis.
\end{lemma}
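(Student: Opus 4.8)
The plan is to dispose of the two statements about $\Pi_n$ at once, and then to prove the formula for $\II_n$ by producing an explicit regular basis built from a ``base-$q$'' sequence of evaluation points.

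First I would settle the claims on $\Pi_n$. Since $\ppp = \pi D$ is principal it is invertible, hence a $t$-ideal, and as the maximal ideal of the local domain $D$ it is the unique $t$-maximal ideal; its norm is $q$. Thus $\ppp$ is the only $t$-maximal ideal of norm $q$ and there are none of any other norm, so directly from the definition $\Pi_q = \ppp$ and $\Pi_n = D$ for $n \neq q$. Because $\ppp$ is principal and invertible, $(\ppp^{w_q(n)})^{-1} = \pi^{-w_q(n)}D = (\ppp^{-1})^{w_q(n)}$, so it remains only to prove $\II_n = \pi^{-w_q(n)}D$ for every $n$; by Proposition \ref{charideal} this will simultaneously yield the regular basis.

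The engine is an explicit sequence of points. Multiplication by $\pi^l$ gives $\ppp^l/\ppp^{l+1} \cong D/\ppp$ (using that $D$ is a domain), so $|D/\ppp^k| = q^k$. Fixing representatives $r_0 = 0, r_1, \ldots, r_{q-1}$ of $D/\ppp$ and writing each $i \geq 0$ in base $q$ as $i = \sum_l i_l q^l$, I set $a_i = \sum_l \pi^l r_{i_l}$. With $v(a) = \sup\{k : a \in \ppp^k\}$, the construction is arranged so that for $i \neq j$ the value $v(a_i - a_j)$ equals the index of the first base-$q$ digit in which $i$ and $j$ differ; in particular it is finite, the $a_i$ are distinct, and $a_i \equiv a_j \pmod{\ppp^k}$ if and only if $i \equiv j \pmod{q^k}$. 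A counting argument then gives, for every $a \in D$, that $v\left(\prod_{i<m}(a - a_i)\right) \geq \sum_{i<m} v(a - a_i) = \sum_{k\geq 1}\#\{i<m : a \equiv a_i \pmod{\ppp^k}\} \geq \sum_{k\geq 1}\lfloor m/q^k\rfloor = w_q(m)$, so that $f_m := \pi^{-w_q(m)}\prod_{i<m}(X - a_i)$ lies in $\Int(D)$, has degree $m$, and has leading coefficient $\pi^{-w_q(m)}$. This already shows $\pi^{-w_q(m)} \in \II_m$.

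The crux, which I expect to be the main obstacle, is the reverse inclusion $\II_n \subseteq \pi^{-w_q(n)}D$, which I would get by showing the $f_m$ form a $D$-basis of $\Int(D)$. The decisive point is that $f_m(a_m)$ is a unit: because $D$ is local with principal maximal ideal, any nonzero element of finite $v$-value $e$ equals $\pi^e$ times a unit, so the finiteness of each $v(a_m - a_i)$ upgrades submultiplicativity to exact multiplicativity and forces $\prod_{i<m}(a_m - a_i) = \pi^{w_q(m)}\cdot(\text{unit})$, whence $f_m(a_m) = \prod_{i<m}(a_m - a_i)/\pi^{w_q(m)}$ is a unit. Now consider the evaluation map $\Int_n(D) \to D^{n+1}$, $f \mapsto (f(a_0), \ldots, f(a_n))$, which is injective on polynomials of degree $\leq n$. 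Since $f_m(a_i) = 0$ for $i < m$ and $f_m(a_m)$ is a unit, the images of $f_0, \ldots, f_n$ form a triangular matrix with unit diagonal, hence an element of $GL_{n+1}(D)$; so $\bigoplus_{m=0}^n D f_m$ already maps onto all of $D^{n+1}$. As $\Int_n(D)$ also maps into $D^{n+1}$ and evaluation is injective, this forces $\Int_n(D) = \bigoplus_{m=0}^n D f_m$. Therefore $\{f_m\}$ is a regular basis, $\II_n = \pi^{-w_q(n)}D$ by Proposition \ref{charideal}, and $\Int(D)$ has a regular basis. The only place requiring care beyond the classical valuation-domain computation is exactly the multiplicativity used to see that $f_m(a_m)$ is a unit: $v$ is merely submultiplicative in general, but the local-plus-principal hypothesis restores exact multiplicativity on elements of finite $v$-value, which is what makes the non-Noetherian case (for instance a valuation domain of rank $>1$ with principal maximal ideal) go through.
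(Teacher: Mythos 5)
Your proof is correct, and it is essentially the paper's argument made self-contained: the paper's entire proof is the two-line citation of \cite[Remark II.2.14]{cah} and the proof of \cite[Corollary II.2.9]{cah}, and those references establish exactly what you do, namely that the base-$q$ digit sequence $(a_i)$ is a (generalized) very well distributed and well-ordered sequence, so that the polynomials $\pi^{-w_q(m)}\prod_{i<m}(X-a_i)$ are integer-valued by the counting identity $\sum_{k\geq 1}\lfloor m/q^k\rfloor = w_q(m)$ and form a regular basis by the triangular-evaluation argument, whence $\II_n = \pi^{-w_q(n)}D = \left(\ppp^{w_q(n)}\right)^{-1}$ via Proposition \ref{charideal}. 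Your one point of extra care --- that in a possibly non-Noetherian local domain $v$ is merely superadditive, but any element of finite $v$-value $e$ factors as $\pi^e$ times a unit, which is what makes $f_m(a_m)$ a unit --- is precisely the observation by which \cite[Remark II.2.14]{cah} extends the DVR computation of \cite[Corollary II.2.9]{cah} to local domains with principal maximal ideal, so your write-up faithfully reconstructs the cited proof rather than taking a different route.
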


\begin{proof}
This follows from \cite[Remark II.2.14]{cah} and the proof of \cite[Corollary II.2.9]{cah}.
\end{proof}

The following result generalizes the above lemma to nonlocal domains.

\begin{theorem}\label{regbasisalg}
Let $D$ be an integral domain satisfying conditions $(\mathcal{C}1)$, $(\mathcal{C}2)$, and $(\mathcal{C}3)$.  For every $n$ let $\pi_n \in D$ be a generator of $\Pi_n$.  Then $\sigma_n = \displaystyle \prod_{1 < k \leq n} \pi_k^{-w_k(n)}$ is a generator of $\II_n$ for all $n$, and therefore $\Int(D)$ has a regular basis.  For all $n$ and all $k > 1$, let $F_n = \frac{X^n-X}{\pi_n} \in \Int(D)$, and let $F_{k,n} = \prod_{i = 0}^r (F_k^{\circ i})^{n_i} \in \Int(D)$, where $n = n_0+ n_1k + \cdots + n_r k^r$ is the $k$-adic expansion of $n$.  Then $F_{k,n}$ has degree $n$ and leading coefficient $\pi_k^{-w_k(n)}$ for all $n,k$. For every integer $n > 1$ there exist $a_{2,n}, a_{3,n}, \ldots, a_{n,n} \in D$ such that
$$\sigma_n = \sum_{1< k \leq n} a_{k,n} \pi_k^{-w_k(n)}.$$
Let $G_0 = 1$, $G_1 = X$, and $\displaystyle G_n = \sum_{1 < k \leq n} a_{k,n} F_{k,n}$ for all $n > 1$.  Then $G_n \in \Int(D)$ has degree $n$ and leading coefficient $\sigma_n$ for every $n$, and therefore $\{G_0, G_1, G_2, \ldots\}$ is a regular basis of $\Int(D)$.
\end{theorem}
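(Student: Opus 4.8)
The plan is to prove the statements in the order listed, the backbone being the computation of the characteristic ideal $\II_n$ by localization. First I would show that $\sigma_n$ generates $\II_n$. Since $\II_n$ is a $w$-ideal by Theorem \ref{factprop}(5), it is the intersection of its localizations $\II_n D_\ppp$ over the $t$-maximal ideals $\ppp$, and the same holds for the principal ideal $\sigma_n D$ because $D = \bigcap_{\ppp \in t\Max(D)} D_\ppp$; so it suffices to check $\sigma_n D_\ppp = \II_n D_\ppp$ at every $\ppp \in t\Max(D)$. If $\ppp$ is not $t$-maximal of finite norm, then $\II_n D_\ppp = D_\ppp$ by Theorem \ref{factprop}(4), and every $\pi_k$ is a unit in $D_\ppp$ so $\sigma_n D_\ppp = D_\ppp$ as well. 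If $N(\ppp) = q$ is a prime power, then $(\mathcal{C}2)$ gives $\Pi_q D_\ppp = \ppp D_\ppp = \pi_q D_\ppp$ while $\Pi_k D_\ppp = D_\ppp$ for $k \neq q$; thus $D_\ppp$ is a local domain with principal maximal ideal of finite norm $q$, so by polynomial L-regularity ($(\mathcal{C}1)$ with Theorem \ref{factprop}(8)(a)) together with Lemma \ref{prinlemma} one gets $\II_n D_\ppp = \II_n(D_\ppp) = \pi_q^{-w_q(n)} D_\ppp$. Since every $\pi_k$ with $k \neq q$ is a unit in $D_\ppp$, this forces $\sigma_n D_\ppp = \pi_q^{-w_q(n)} D_\ppp = \II_n D_\ppp$ (the exponent being $0$ when $q > n$). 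This proves $\II_n = \sigma_n D$, and the regular basis assertion then follows from Proposition \ref{charideal}.

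Next I would pin down $F_n$, $F_k^{\circ i}$, and $F_{k,n}$. That $F_n \in \Int(D)$ amounts to $a^n - a \in \Pi_n = \pi_n D$ for all $a \in D$; for $n = q$ a prime power this holds because each $t$-maximal $\ppp$ of norm $q$ has $D/\ppp$ a finite field of order $q$, so $a^q \equiv a \pmod \ppp$, and intersecting over the finitely many such $\ppp$ (by $(\mathcal{C}2)$) gives $a^q - a \in \Pi_q$; the remaining $n$ are trivial since then $\Pi_n = D$. An induction on $i$, using that $F_k$ has degree $k$ and leading coefficient $\pi_k^{-1}$, shows $F_k^{\circ i}$ has degree $k^i$ and leading coefficient $\pi_k^{-w_k(k^i)}$ with $w_k(k^i) = (k^i-1)/(k-1)$. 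Multiplying over the $k$-adic digits of $n$ then gives that $F_{k,n}$ has degree $\sum_i n_i k^i = n$ and leading coefficient $\pi_k^{-\sum_i n_i w_k(k^i)}$, and the identity $\sum_i n_i (k^i-1)/(k-1) = (n-s)/(k-1) = w_k(n)$, with $s$ the $k$-adic digit sum of $n$, identifies this as $\pi_k^{-w_k(n)}$. In particular $\pi_k^{-w_k(n)} \in \II_n$, being the coefficient of $X^n$ in $F_{k,n} \in \Int_n(D)$.

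The heart of the argument, and the step I expect to be the main obstacle, is producing the coefficients $a_{k,n}$, i.e.\ showing that $\sigma_n$ lies in the $D$-submodule $J = \sum_{1 < k \leq n} \pi_k^{-w_k(n)} D$ of $K$. The key observation is that a $t$-maximal ideal $\ppp$ of finite norm is automatically a maximal ideal of $D$, since $D/\ppp$ is then a finite integral domain, hence a field. Consequently, for distinct prime powers the ideals $\Pi_q$ --- products of distinct maximal ideals of norm $q$ --- are pairwise comaximal, and hence so are their powers $\Pi_q^{w_q(n)} = \pi_q^{w_q(n)} D$. Writing $q_1, \dots, q_m$ for the prime powers at most $n$, pairwise comaximality of the $\Pi_{q_i}^{w_{q_i}(n)}$ yields $\sum_i \prod_{j \neq i} \Pi_{q_j}^{w_{q_j}(n)} = D$ by the Chinese Remainder Theorem; choosing $b_i \in D$ with $\sum_i b_i \prod_{j \neq i} \pi_{q_j}^{w_{q_j}(n)} = 1$ and multiplying through by $\prod_i \pi_{q_i}^{-w_{q_i}(n)}$ gives $\prod_i \pi_{q_i}^{-w_{q_i}(n)} = \sum_i b_i \pi_{q_i}^{-w_{q_i}(n)}$. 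Since $\pi_k$ is a unit for every non-prime-power $k$, one has $\sigma_n = u \prod_i \pi_{q_i}^{-w_{q_i}(n)}$ for some unit $u$, and absorbing $u$ into the $b_i$ while setting the remaining $a_{k,n}$ equal to $0$ produces $a_{k,n} \in D$ with $\sigma_n = \sum_{1 < k \leq n} a_{k,n} \pi_k^{-w_k(n)}$.

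Finally I would assemble $G_n = \sum_{1 < k \leq n} a_{k,n} F_{k,n}$. As a $D$-linear combination of elements of $\Int(D)$ it lies in $\Int(D)$; each $F_{k,n}$ has degree $n$, so $\deg G_n \leq n$, and its coefficient of $X^n$ equals $\sum_k a_{k,n} \pi_k^{-w_k(n)} = \sigma_n \neq 0$, so $G_n$ has degree exactly $n$ with leading coefficient $\sigma_n$. Together with $G_0 = 1$ and $G_1 = X$ (leading coefficients $\sigma_0 = \sigma_1 = 1$), this exhibits one polynomial of each degree whose leading coefficient generates $\II_n = \sigma_n D$, so the final clause of Proposition \ref{charideal} shows $\{G_0, G_1, G_2, \dots\}$ is a regular basis of $\Int(D)$.
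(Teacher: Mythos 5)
Your proof is correct and follows essentially the paper's own argument: the identity $\II_n = \sigma_n D$ is established by the same localization computation (Theorem \ref{factprop}(3)/(4)/(8)(a) together with Lemma \ref{prinlemma} and the remark that $(\mathcal{C}2)$ forces $\Pi_q D_\ppp = \ppp D_\ppp$ at $t$-maximal $\ppp$ of norm $q$ and $\Pi_k D_\ppp = D_\ppp$ otherwise), your only variation being the reduction to $t$-maximal primes via the $w$-ideal property of $\II_n$ and of $\sigma_n D$, where the paper checks all primes directly. For the remainder---the degree and leading coefficient of $F_{k,n}$, the existence of the $a_{k,n}$, and the verification that $\{G_0, G_1, G_2, \ldots\}$ is a regular basis---the paper simply says it ``follows exactly as in the proof of \cite[Propositions II.3.13 and II.3.14]{cah}''; your comaximality argument (resting on the observation that $t$-maximal ideals of finite norm are maximal, so that for distinct prime powers the ideals $\Pi_q^{w_q(n)} = \pi_q^{w_q(n)}D$ are pairwise comaximal and $\sum_i \prod_{j \neq i} \Pi_{q_j}^{w_{q_j}(n)} = D$) is precisely the argument being invoked there, so you have supplied in full the details the paper delegates to the reference.
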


\begin{proof}
Let $$I = \prod_{1 < k \leq n} \pi_k^{-w_k(n)}D,$$ which is a fractional ideal of $D$.  Let $\ppp$ be a prime ideal of $D$.  Suppose first that $\ppp$ is $t$-maximal of finite norm $q$.  Then $\ppp D_\ppp = \pi_q D_\ppp$ is principal, and therefore by Theorem \ref{factprop}(3) and Lemma \ref{prinlemma} we have
$$\II_n(D)_\ppp = \II_n(D_\ppp) =  \left(\ppp^{w_q(n)} D_\ppp\right)^{-1}  = \pi_q^{-w_q(n)}D_\ppp = I D_\ppp.$$ 
(If $N(\ppp) > n$ then $\II_n(D)_\ppp = D_\ppp  = ID_\ppp$.)  If, on the other hand, $\ppp$ is not $t$-maximal of finite norm, then $\II_n(D)_\ppp = \II_n(D_\ppp) = D_\ppp = ID_\ppp$ by Theorem \ref{factprop}(4). Thus $\II_n(D) _\ppp = I D_\ppp$ for every prime ideal $\ppp$ of $D$, whence $\II_n(D)  = I$.    Finally, the remainder of the proposition follows exactly as in the proof of \cite[Propositions II.3.13 and II.3.14]{cah}.
\end{proof}


\begin{theorem}\label{equivthm}
Let $D$ be an integral domain satisfying conditions $(\mathcal{C}1)$, $(\mathcal{C}2)$, and $(\mathcal{C}5)$.
Then for every nonnegative integer $n$ the fractional ideals $\II_n$, $n!_D$, and $\Pi_n$ are $t$-invertible $t$-ideals, and one has $$\II_n = n!_D^{-1} = \prod_{{\ppp \in t\Max(D)} \atop {N(\ppp) \leq n}} \left(\ppp^{w_{N(\ppp)}(n)}\right)^{-1}$$
and $$n!_D = \prod_{{\ppp \in t\Max(D)} \atop {N(\ppp) \leq n}} \left( \ppp^{w_{N(\ppp)}(n)}\right)_t = \prod_{1 < q \leq n} \left(\Pi_q^{w_q(n)}\right)_t.$$
Moreover, the group $\PO(D)$ is generated by any of the following sets: $\{q!_D: q \textup{ is a prime power}\}$; $\{\II_q: q \textup{ is a prime power}\}$; and $\{\Pi_q: q \textup{ is a prime power}\}$.
In particular, the following conditions are equivalent.
\begin{enumerate}
\item $\Int(D)$ has a regular basis.
\item $\PO(D)$ is trivial.
\item $\II_n$ is principal for every nonnegative integer $n$.
\item $\II_q$ is principal for every prime power $q$.
\item $n!_D$ is principal for every nonnegative integer $n$.
\item $q!_D$ is principal for every prime power $q$.
\item $\Pi_n$ is principal for every nonnegative integer $n$.
\item $\Pi_q$ is principal for every prime power $q$.
\end{enumerate}
\end{theorem}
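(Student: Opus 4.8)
The plan is to compute $\II_n$ locally at every $t$-maximal ideal and then glue the local data together. First I would note that a $t$-maximal ideal $\ppp$ of finite norm $q$ is automatically maximal, since $D/\ppp$ is then a finite integral domain, hence a field; moreover by $(\mathcal{C}5)$ such a $\ppp$ is $t$-invertible, so $\ppp D_\ppp$ is principal and $D_\ppp$ is a local domain with principal maximal ideal of norm $q$. Because $D$ is polynomially L-regular $(\mathcal{C}1)$, Theorem \ref{factprop}(8)(a) gives $\II_n(D) D_\ppp = \II_n(D_\ppp)$ for every prime $\ppp$. Feeding the local principality into Lemma \ref{prinlemma} at the finite-norm $t$-maximal primes, and invoking Theorem \ref{factprop}(4) at all other primes, I obtain the uniform local description $\II_n(D) D_\ppp = \left((\ppp D_\ppp)^{w_{N(\ppp)}(n)}\right)^{-1}$ when $\ppp$ is $t$-maximal of finite norm (this being $D_\ppp$ when $N(\ppp) > n$, since then $w_{N(\ppp)}(n) = 0$) and $\II_n(D) D_\ppp = D_\ppp$ otherwise.

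Next I would set $J = \prod_{\ppp \in t\Max(D),\ N(\ppp) \leq n}\left(\ppp^{w_{N(\ppp)}(n)}\right)^{-1}$, a finite product by $(\mathcal{C}2)$ of $t$-invertible $t$-ideals (each $\ppp$ is $t$-invertible by $(\mathcal{C}5)$, and each $\ppp^{-1}$ is divisorial), hence itself $t$-invertible. Using Lemma \ref{tinv} to distribute localization across the inverses, together with the comaximality $\ppp D_\qqq = D_\qqq$ for distinct $t$-maximal $\ppp,\qqq$, I would verify that $J D_\qqq = \II_n(D) D_\qqq$ for every $t$-maximal $\qqq$. Since $\II_n$ is a $w$-ideal by Theorem \ref{factprop}(5) and $I_w = \bigcap_\qqq I D_\qqq$, agreement of all $t$-maximal localizations forces $J_w = \II_n$; as $J$ is $t$-invertible, this yields both the $t$-invertibility of $\II_n$ and the stated product formula. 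Theorem \ref{factprop}(6) then gives $\II_n = n!_D^{-1}$, and taking inverses in the group of $t$-invertible $t$-ideals gives $n!_D = \prod_\ppp \left(\ppp^{w_{N(\ppp)}(n)}\right)_t$. Grouping factors by common norm $q$, using that $w_{N(\ppp)}(n) = w_q(n)$ depends only on $q$, and invoking $\Pi_q = \prod_{N(\ppp)=q}\ppp$ (valid under $(\mathcal{C}2)$, and $t$-invertible as a finite product of $t$-invertibles) converts this into $n!_D = \prod_{1 < q \leq n}\left(\Pi_q^{w_q(n)}\right)_t$, and in particular exhibits $\Pi_n$ as a $t$-invertible $t$-ideal.

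For the P\'olya-Ostrowski group I would pass to $\Cl_t(D)$ and write, additively, $[\II_n] = -\sum_q w_q(n)[\Pi_q]$ with $q$ ranging over the prime powers $\leq n$, so each generator $[\II_n]$ of $\PO(D)$ lies in $\langle [\Pi_q] : q \textup{ a prime power}\rangle$. Conversely, ordering the prime powers and using $w_q(q) = 1$ and $w_k(q) = 0$ for $k > q$, the system expressing the $[\II_q]$ in terms of the $[\Pi_{q'}]$ with $q' \leq q$ is unitriangular over $\ZZ$, so each $[\Pi_q]$ is a $\ZZ$-combination of the $[\II_q]$. Hence $\{[\Pi_q]\}$, $\{[\II_q]\}$, and (via $[\II_q] = -[q!_D]$) $\{[q!_D]\}$ all generate $\PO(D) = \langle [\II_n] : n \geq 0\rangle$. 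Finally, since a $t$-invertible $t$-ideal is principal exactly when its class in $\Cl_t(D)$ vanishes, and $\II_n$, $n!_D$, $\Pi_n$ are all $t$-invertible $t$-ideals, each of conditions (3)--(8) asserts the triviality of one of these generating sets and is therefore equivalent to the triviality of $\PO(D)$, i.e.\ to (2); the equivalence (1)$\Leftrightarrow$(3) is Proposition \ref{charideal}.

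I expect the main obstacle to be the gluing step that simultaneously establishes $t$-invertibility of $\II_n$ and the exact product formula, since it must correctly combine the local principality coming from $(\mathcal{C}5)$ and polynomial L-regularity, the input of Lemma \ref{prinlemma}, and the local-global $w$-ideal principle. The one point needing care is reconciling the ordinary product $J$ with its $w$-closure: this is clean only because the factors $\left(\ppp^{w_{N(\ppp)}(n)}\right)^{-1}$ have pairwise disjoint supports among the $t$-maximal primes, so the computation via Lemma \ref{tinv} isolates a single factor at each $\qqq$. I would make this disjoint-support observation explicit, as it is what justifies writing ordinary products (rather than $t$-products) in the formula for $\II_n$ while retaining the $t$-closure in the formula for $n!_D$. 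Everything downstream of the $\II_n$ formula—inversion, regrouping by norm, and the class-group bookkeeping—is then routine.
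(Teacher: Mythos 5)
Your proof follows the paper's strategy almost exactly---localize at each prime, combine polynomial L-regularity (Theorem \ref{factprop}(3)/(8)(a)) with Lemma \ref{prinlemma} and Lemma \ref{tinv} at the finite-norm $t$-maximal primes and Theorem \ref{factprop}(4) elsewhere, then invert via Theorem \ref{factprop}(6), regroup by norm using $\Pi_q = \prod_{N(\ppp)=q}\ppp$ (valid under $(\mathcal{C}2)$), and do the class-group bookkeeping (your unitriangular system with $w_q(q)=1$ and $w_k(q)=0$ for $k>q$ is precisely the paper's induction on $q$ in matrix form). The one place you deviate is the gluing step, and that is where there is a gap as written. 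You glue only over $t\Max(D)$: from $J D_\qqq = \II_n D_\qqq$ for all $\qqq \in t\Max(D)$ and the facts that $\II_n$ is a $w$-ideal and $I_w = \bigcap_{\qqq \in t\Max(D)} I D_\qqq$, you correctly get $J_w = \II_n$. But the stated product formula requires $J = J_w$, i.e.\ that the \emph{ordinary} product $J$ of the divisorial ideals $\left(\ppp^{w_{N(\ppp)}(n)}\right)^{-1}$ is itself $w$-closed, and your justification---disjoint supports, so ``Lemma \ref{tinv} isolates a single factor at each $\qqq$''---only validates the computation of the localizations $J D_\qqq$ at $t$-maximal $\qqq$, i.e.\ of $J_w$; it does not show $J$ is $w$-closed. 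This is a real issue in principle, since ordinary products of divisorial ideals need not be divisorial (which is exactly why the formula for $n!_D$ carries $t$-closures on each factor).

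The fix is contained in your own opening observation, and it is how the paper proceeds: since a $t$-maximal ideal $\qqq$ of finite norm is \emph{maximal} ($D/\qqq$ is a finite domain, hence a field), for any prime $\ppp \neq \qqq$ one may choose $s \in \qqq^{m} \setminus \ppp$, and then $x\qqq^{m} \subseteq D$ gives $x = (xs)/s \in D_\ppp$, so $\left(\qqq^{m}\right)^{-1} D_\ppp = D_\ppp$. Hence $J D_\ppp$ and $\II_n(D)_\ppp = \II_n(D_\ppp)$ agree at \emph{every} prime $\ppp$ of $D$, not merely the $t$-maximal ones, and since any $D$-submodule $M$ of $K$ satisfies $M = \bigcap_{\mm} M_\mm$ over the maximal ideals $\mm$, equality of all localizations forces $\II_n = J$ outright. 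This is the paper's gluing: it yields the ordinary-product formula and the $t$-invertibility of $\II_n$ (as a finite product of $t$-invertible ideals) in one stroke, and the $w$-closure detour---along with the unproven claim $J = J_w$---disappears. With that one step replaced, the rest of your argument (inversion in the group of $t$-invertible $t$-ideals, the generation statements for $\PO(D)$, and the equivalence of (1)--(8) via Proposition \ref{charideal}, Theorem \ref{trivcor}, and the fact that a $t$-invertible $t$-ideal is principal exactly when its class in $\Cl_t(D)$ vanishes) goes through and agrees with the paper's proof.
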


\begin{proof}
Let $$I = \prod_{{\ppp \in t\Max(D)} \atop {N(\ppp) \leq n}} \left(\ppp^{w_{N(\ppp)}(n)}\right)^{-1},$$ which is a well-defined fractional ideal of $D$.  Let $\ppp$ be a prime ideal of $D$.  If $\ppp$ is $t$-maximal of finite norm, hence $t$-invertible, then $\ppp D_\ppp$ is principal, and therefore by Theorem \ref{factprop}(3) and Lemmas \ref{prinlemma} and \ref{tinv} we have
$$\II_n(D)_\ppp = \II_n(D_\ppp) =  \left(\ppp^{w_{N(\ppp)}(n)} D_\ppp\right)^{-1}  = \left(\ppp^{w_{N(\ppp)}(n)}\right)^{-1} D_\ppp = I D_\ppp.$$
 If, on the other hand, $\ppp$ is not $t$-maximal of finite norm, then $\II_n(D)_\ppp = \II_n(D_\ppp) = D_\ppp = ID_\ppp$ by Theorem \ref{factprop}(4). Thus $\II_n(D) _\ppp = I D_\ppp$ for every prime ideal $\ppp$ of $D$, whence $\II_n(D)  = I$.  

Now $\Pi_n$ is a finite intersection, and product, of $t$-invertible $t$-maximal ideals and is therefore a $t$-invertible $t$-ideal.  If $J, J'$ are $t$-invertible fractional ideals, then so are $J^{-1}$ and $JJ'$, and one has $(J^{-1}J'^{-1})^{-1} = (JJ')_t$.  The given product for $\II_n(D)$, then, implies that $\II_n(D)$ is $t$-invertible, and therefore  by Theorem \ref{trivcor} one has $\II_n(D) = n!_D^{-1}$ and $\II_n(D)$ is a $v$-ideal, hence a $t$-ideal.  Moreover, $n!_D = \II_n(D)^{-1}$ is also a $t$-invertible $t$-ideal, and one has
$$n!_D = \left(\prod_{{\ppp \in t\Max(D)} \atop {N(\ppp) \leq n}} \left(\ppp^{w_{N(\ppp)}(n)}\right)^{-1} \right)^{-1} = \prod_{{\ppp \in t\Max(D)} \atop {N(\ppp) \leq n}} \left(\ppp^{w_{N(\ppp)}(n)}\right)_t$$
and therefore $$n!_D = \prod_{1 < q \leq n} \left(\Pi_q^{w_q(n)}\right)_t.$$ 
It follows that $\PO(D)$ is contained in the subgroup $G_1$ of $\Cl_t(D)$ generated by the image of $\Pi_q$ for all prime powers $q$.  Moreover, since $w_q(q) = 1$, one has
$$q!_D = \Pi_q \prod_{1 < q' < q} \left(\Pi_{q'}^{w_{q'}(q)}\right)_t,$$ so by induction on $q$ the image of $\Pi_q$ in $\Cl_t(D)$  for all $q$ is in the subgroup $G_2$ of $\PO(D)$ generated by $q!_D$ for all $q$, and therefore $G_1 \subseteq G_2$.  Therefore $\PO(D) \subseteq G_1 \subseteq G_2 \subseteq \PO(D)$, so equalities holds.  Moreover, $\PO(D) = G_2$ is also generated by $\II_q(D) = q!_D^{-1}$ for all $q$.   The equivalence of statements (1) through (8), then, follows from these equalities of groups and from Theorem \ref{trivcor}.
\end{proof}

\begin{remark}\label{polyafield}
Let $K$ be a finite Galois extension of $\QQ$.  Then for any prime power $q = p^r$, where $p$ is a prime, one has the following.
\begin{enumerate}
\item $\Pi_q = \sqrt{p \mathcal{O}_K}$ if $p$ is ramified in $K$ and $r$ equal to the inertial degree $f_p$.
\item $\Pi_q = \mathcal{O}_K$ if $p$ is ramified in $K$ and $r \neq f_p$.
\item $\Pi_q = p\mathcal{O}_K$ is principal if $p$ is unramified in $K$.
\end{enumerate}
In particular, $\PO(\mathcal{O}_K)$ is generated by $\sqrt{p\mathcal{O}_K}$ for the set of primes $p$ dividing the discriminant $\Delta_{K/\QQ}$, and $\Int(D)$ has a regular basis if and only if $\sqrt{p\mathcal{O}_K}$ is principal for all such $p$.
\end{remark}

Motivated by Theorem \ref{equivthm}, we find sufficient conditions for $(\mathcal{C}1)$,  $(\mathcal{C}2)$, and $(\mathcal{C}5)$ to hold.  A domain $D$ is said to be {\it of finite character} if every nonzero element of $D$ is contained in only finitely many maximal ideals of $D$.  Similarly, $D$ is said to be {\it of finite $t$-character} if every nonzero element of $D$ is contained in only finitely many $t$-maximal ideals of $D$.  Note, for example, that every TV domain is of finite $t$-character, and a PVMD of finite $t$-character is equivalently a domain of Krull type.

\begin{proposition}\label{normlemma}
Any domain $D$ of finite character or of finite $t$-character satisfies conditions $(\mathcal{C}1)$ and $(\mathcal{C}2)$.
\end{proposition}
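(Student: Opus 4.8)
The plan is to treat the two conditions separately: I would establish $(\mathcal{C}2)$ by a direct counting argument that works uniformly under either hypothesis, and deduce $(\mathcal{C}1)$ from the known characterizations of polynomial L-regularity. The first observation for $(\mathcal{C}2)$ is that every $t$-maximal ideal $\ppp$ of finite norm is in fact maximal: $\ppp$ is prime, so $D/\ppp$ is an integral domain, and being finite it is a field, whence $\ppp$ is maximal. Thus, under either finite character or finite $t$-character, it will suffice to bound the relevant ideals using the fact that a fixed nonzero element lies in only finitely many of them. We may assume $D$ is not a field (otherwise there are no nonzero proper ideals and $(\mathcal{C}2)$ holds vacuously), and fix a nonzero non-unit $d \in D$. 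Given $n$, I would set
$$e = \prod_{j=1}^{n-1}(d^j - 1).$$
Since $d$ is a non-unit in the domain $D$, each power $d^j$ is a non-unit, so each factor $d^j - 1$ is nonzero and hence $e \neq 0$.

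The core of the argument for $(\mathcal{C}2)$ is then a dichotomy. Let $\ppp$ be a $t$-maximal ideal with $N(\ppp) = q \leq n$; by the first observation $\ppp$ is maximal and $D/\ppp$ is a field of order $q$. Either $d \in \ppp$, or $d \notin \ppp$, in which case the image of $d$ is a unit in $D/\ppp$, so $d^{q-1} - 1 \in \ppp$; since $1 \leq q-1 \leq n-1$, the factor $d^{q-1}-1$ of $e$ already shows $e \in \ppp$. Hence every such $\ppp$ contains the nonzero element $d$ or the nonzero element $e$. Now $d$ and $e$ each lie in only finitely many maximal ideals if $D$ has finite character, and in only finitely many $t$-maximal ideals if $D$ has finite $t$-character; as the ideals $\ppp$ in question are simultaneously maximal and $t$-maximal, either hypothesis bounds their number. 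This gives $(\mathcal{C}2)$.

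For $(\mathcal{C}1)$ I would invoke the characterization of polynomial L-regularity in \cite[Proposition 2.3]{ell2} together with Theorem \ref{factprop}(3), which reduces the claim to the inclusion $\II_n(S^{-1}D) \subseteq S^{-1}\II_n(D)$ for every $n$ and every multiplicative subset $S$ — equivalently, to the existence of a single $s \in S$ clearing the denominators of a given $f \in \Int(S^{-1}D)$ of degree $n$. The strategy is local-global: the denominator of $f$ can be controlled prime-by-prime, and by Theorem \ref{factprop}(4) the only primes that matter are the $t$-maximal ones of finite norm, while finite character or finite $t$-character supplies exactly the local finiteness needed to amalgamate the local denominators into a single element of $S$. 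I expect this amalgamation step — preventing the denominators from accumulating over infinitely many primes — to be the crux of the argument, and the only place where the finiteness hypothesis is genuinely used; the already-known case of TV domains, recorded in \cite[Proposition 2.4]{ell2}, is subsumed since every TV domain has finite $t$-character.
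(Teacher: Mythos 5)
Your argument for $(\mathcal{C}2)$ is correct and complete, and it is a mild variant of what the paper does: the paper fixes a prime power $q$, assumes $D$ infinite (a finite domain is a field, so the claim is vacuous), picks $a \in D$ with $a^q - a \neq 0$, and observes that every ($t$-)maximal ideal of norm $q$ contains $a^q - a$, so finite character or finite $t$-character bounds their number. Your version with a non-unit $d$ and $e = \prod_{j=1}^{n-1}(d^j - 1)$ handles all norms $q \leq n$ with a single pair of nonzero elements and replaces the infinite/finite dichotomy by the field/non-field one; the observation that $t$-maximal ideals of finite norm are maximal, which you need to apply the finite character hypothesis, is also correct. That half of the proposal stands on its own.

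The genuine gap is in $(\mathcal{C}1)$: you do not prove it, you only announce a strategy. The ``amalgamation step'' you yourself identify as the crux --- producing a \emph{single} $s \in S$ with $sf \in \Int(D)$ for a given $f \in \Int(S^{-1}D)$ --- is exactly the content of polynomial L-regularity, and no argument for it is given. Moreover, the proposed reduction via Theorem \ref{factprop}(3) buys nothing: that statement is an \emph{equivalence} between $S^{-1}\Int(D) = \Int(S^{-1}D)$ and $S^{-1}\II_n(D) = \II_n(S^{-1}D)$ for all $n$, and the nontrivial inclusion $\II_n(S^{-1}D) \subseteq S^{-1}\II_n(D)$ is essentially a restatement of the original problem (a polynomial $f \in \Int(S^{-1}D)$ whose leading coefficient has been cleared by some $s \in S$ is precisely what one is trying to manufacture), so the problem has been rephrased, not reduced. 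Theorem \ref{factprop}(4) also does not supply the needed local control, since it only concerns primes that are \emph{not} $t$-maximal of finite norm, while the difficulty lies at the (possibly infinitely many) $t$-maximal primes of finite norm. The paper disposes of $(\mathcal{C}1)$ by direct citation: \cite[Proposition 2.4]{ell2} asserts precisely that any domain of finite character or of finite $t$-character is polynomially L-regular. You misread the scope of that result as covering only TV domains (which, as you note, have finite $t$-character but do not exhaust it), and so set out to prove the general case from scratch --- but left the essential step unproven. The fix is simply to invoke \cite[Proposition 2.4]{ell2} as stated; as written, however, your proposal establishes only $(\mathcal{C}2)$.
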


\begin{proof}
 By \cite[Proposition 2.4]{ell2} any domain of finite character or finite $t$-character is polynomially L-regular.  To verify condition $(\mathcal{C}2)$ we may suppose that $D$ is infinite.  Let $q$ be a power of a prime.  Then there exists $a \in R$ with $a^q - a \neq 0$.  For every maximal ideal $\ppp$ of norm $q$ one has $a^q - a \in \ppp$, so since $D$ is of finite character or of finite $t$-character there are only finitely many such $\ppp$ that are $t$-maximal.   The lemma follows.
\end{proof}

\begin{proposition}\label{equivcor}
Let $D$ be an integral domain such that every $t$-maximal ideal of $D$ is $t$-invertible.  Then $D$ is of finite $t$-character if and only if every $t$-ideal $I$ of $D$ such that $I D_\ppp$ is principal for every $t$-maximal ideal $\ppp$ of $D$ is $t$-invertible.  For any such domain $D$, the all of the hypotheses (conditions $(\mathcal{C}1)$, $(\mathcal{C}2)$, and $(\mathcal{C}5)$) of Theorem \ref{equivthm} hold.
\end{proposition}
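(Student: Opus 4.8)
The plan is to prove the two directions of the stated equivalence separately and then read off the last assertion. Throughout write $W = t\Max(D)$ and recall the characterization used repeatedly in Section \ref{sec:3}: a nonzero fractional ideal $I$ is $t$-invertible if and only if $I_t$ is of finite type (that is, $I_t = J_t$ for some finitely generated fractional ideal $J$) and $I_t D_\ppp$ is principal for every $\ppp \in W$. In both directions the local hypothesis supplies half of this condition, so the real content concerns the finite-type condition. The final sentence is then immediate: a domain $D$ of the asserted kind has finite $t$-character, so conditions $(\mathcal C1)$ and $(\mathcal C2)$ hold by Proposition \ref{normlemma}, while $(\mathcal C5)$ is contained in the standing hypothesis that every $t$-maximal ideal of $D$ is $t$-invertible.

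For the forward implication, assume $D$ has finite $t$-character and let $I$ be a $t$-ideal with $ID_\ppp$ principal for every $\ppp \in W$; after multiplying by a nonzero element of $D$ I may assume $I$ is an integral $t$-ideal. Fix $0 \neq a \in I$. Because $D$ has finite $t$-character, only finitely many members $\ppp_1, \dots, \ppp_m$ of $W$ contain $a$. For each $i$ the ideal $ID_{\ppp_i}$ is principal, and since $D_{\ppp_i}$ is local a standard Nakayama argument lets me choose a generator $b_i \in I$, so that $ID_{\ppp_i} = b_i D_{\ppp_i}$. Set $J = (a, b_1, \dots, b_m) \subseteq I$, a finitely generated ideal. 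I would then check that $JD_\ppp = ID_\ppp$ for every $\ppp \in W$: at $\ppp = \ppp_i$ this holds because $b_i \in J$, and at any other $\ppp \in W$ the element $a$ lies in $\ppp_1, \dots, \ppp_m$ only, hence is a unit in $D_\ppp$, forcing $JD_\ppp = D_\ppp = ID_\ppp$. Intersecting over $W$ and using $I_w = I$ (as $I$ is a $t$-ideal, so $I \subseteq I_w \subseteq I_t = I$) gives $J_w = I$, whence $I = J_w \subseteq J_t \subseteq I_t = I$ and so $I = J_t$. Thus $I$ is of finite type and $t$-locally principal, hence $t$-invertible by the characterization above.

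The converse is the substantial part and is where I expect the main difficulty. Here one assumes that every $t$-locally principal $t$-ideal is $t$-invertible and must deduce finite $t$-character; I would argue by contraposition. Suppose some nonzero $a$ lies in infinitely many distinct members of $W$, say $\ppp_1, \ppp_2, \ldots$. The goal is to manufacture a $t$-ideal $I$ that is $t$-locally principal yet not of finite type, for such an $I$ cannot be $t$-invertible and contradicts the hypothesis. I would build $I = (a_0, a_1, a_2, \ldots)_t$ by a recursive ``diagonal'' choice of elements $a_0 = a, a_1, a_2, \ldots$, arranging at stage $n$ that $a_n$ is chosen so that at each $\ppp \in W$ only finitely many of the generators are relevant up to a unit (giving $t$-local principality, using that the $\ppp_i$ are pairwise incomparable $t$-maximal ideals, so an element of one is a unit in the localization at another), while simultaneously ensuring that no finite subset of the $a_n$ can generate $I$ up to $t$-closure (using that infinitely many of the $\ppp_i$ remain ``active''). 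This is precisely a $t$-operation incarnation of Bazzoni's conjecture --- that a domain in which every locally principal ideal is invertible has finite character --- and the delicate point, which is the crux of the whole proposition, is that infinite intersections and localization do not commute, so the verification that the constructed $I$ is genuinely $t$-locally principal must be carried out by hand at each localization; the standing hypothesis that every $t$-maximal ideal is $t$-invertible, so that each $\ppp_i D_{\ppp_i}$ is principal, is exactly what makes the requisite local generators available for this bookkeeping. Alternatively, should a suitable $t$-analogue of the resolution of Bazzoni's conjecture be available in the literature, I would cite it in place of the construction.
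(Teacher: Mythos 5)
Your forward implication is correct and complete, and it is the standard argument: choosing $J = (a, b_1, \dots, b_m) \subseteq I$ with $a$ in only finitely many $t$-maximal ideals, checking $JD_\ppp = ID_\ppp$ for all $\ppp \in t\Max(D)$, and concluding via $J_w = \bigcap_{\ppp \in t\Max(D)} JD_\ppp = I_w = I$ and the characterization of $t$-invertibility as ($t$-finite type plus locally principal at $t$-maximal ideals) is exactly right; note that, as your own argument shows, this direction uses only finite $t$-character and not the standing hypothesis that $t$-maximal ideals are $t$-invertible. Your reading of the last sentence also matches the paper: $(\mathcal{C}1)$ and $(\mathcal{C}2)$ follow from Proposition \ref{normlemma} (via \cite[Proposition 2.4]{ell2}), and $(\mathcal{C}5)$ is the standing hypothesis. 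As for the converse --- the substantial half, as you say --- the paper does precisely what your fallback proposes: its entire proof is a citation of \cite[Corollary 2]{zaf}, Zafrullah's $t$-operation analogue of the Bazzoni statement, together with Proposition \ref{normlemma}. So, taken with the fallback, your route coincides with the paper's, and your explicit treatment of the easy direction is a small addition that the paper leaves to the citation.

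The one caution concerns your primary route for the converse: the ``recursive diagonal choice'' of generators $a_0, a_1, a_2, \ldots$ is a program, not a proof. As written it never specifies the recursion, and it runs directly into the obstruction you yourself flag: for an infinitely generated ideal, $t$-closure need not commute with localization, so neither the $t$-local principality of $(a_0, a_1, \ldots)_t$ nor its failure to be of finite type is established by the sketch. If you want a self-contained converse, the standing hypothesis permits a cleaner construction that avoids taking the $t$-closure of an infinitely generated ideal: if $0 \neq a$ lies in infinitely many distinct $t$-maximal ideals $\ppp_1, \ppp_2, \ldots$, set $I = \bigcup_{n \geq 1} a(\ppp_1 \cdots \ppp_n)^{-1}$. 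Distinct $t$-maximal ideals are $t$-comaximal, so $a \in \ppp_1 \cap \cdots \cap \ppp_n = (\ppp_1 \cdots \ppp_n)_t$ and each $a(\ppp_1 \cdots \ppp_n)^{-1}$ is an integral $v$-ideal; the chain is ascending, and a directed union of $t$-ideals is a $t$-ideal since $t$ has finite type. By Lemma \ref{tinv} one computes $ID_{\ppp_i} = a(\ppp_i D_{\ppp_i})^{-1}$, which is principal because each $\ppp_i$ is $t$-invertible, and $ID_\qqq = aD_\qqq$ at every other $t$-maximal $\qqq$; yet the chain is strictly increasing (equality at step $n$ would force $(\ppp_{n+1})_t = D$ by $t$-cancellation of the $t$-invertible ideal $\ppp_1 \cdots \ppp_n$), so $I$ is not $t$-finite and hence not $t$-invertible. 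This is, in substance, why the hypothesis on $t$-maximal ideals makes the Bazzoni-like statement tractable here, in contrast with the original Bazzoni conjecture, whose resolution required much heavier machinery. Absent such a worked-out construction, the citation of \cite[Corollary 2]{zaf} should be the proof, as it is in the paper.
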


\begin{proof}
This follows from Proposition \ref{normlemma}, \cite[Proposition 2.4]{ell2},  and \cite[Corollary 2]{zaf}.
\end{proof}

\begin{corollary}
Let $D$ be an integral domain such that a $t$-ideal $I$ of $D$ is principal provided that $I$ is $t$-maximal or $ID_\ppp$ is principal for every $t$-maximal ideal $\ppp$ of $D$.  Then conditions $(\mathcal{C}1)$, $(\mathcal{C}2)$, $(\mathcal{C}3)$, and $(\mathcal{C}5)$ hold; in particular, statements (1) through (8) of Theorem \ref{equivthm} hold, and $\Int(D)$ has a regular basis.
\end{corollary}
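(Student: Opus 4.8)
The plan is to deduce everything from Proposition \ref{equivcor} together with the two logically distinct consequences packaged into the hypothesis. Reading the hypothesis as a universally quantified implication over $t$-ideals $I$, I would first extract the two statements: (a) every $t$-maximal ideal of $D$ is principal, and (b) every $t$-ideal $I$ with $ID_\ppp$ principal for all $\ppp \in t\Max(D)$ is principal. Since a nonzero principal fractional ideal is invertible, hence $t$-invertible, statement (a) shows at once that every $t$-maximal ideal of $D$ is $t$-invertible---in particular condition $(\mathcal{C}5)$ holds---and this is precisely the standing hypothesis of Proposition \ref{equivcor}. Likewise, statement (b) says that every such locally principal $t$-ideal is principal, hence $t$-invertible, which is exactly the right-hand side of the equivalence in Proposition \ref{equivcor}.

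Consequently Proposition \ref{equivcor} applies and tells me that $D$ is of finite $t$-character and that conditions $(\mathcal{C}1)$, $(\mathcal{C}2)$, and $(\mathcal{C}5)$ all hold. It then remains only to verify $(\mathcal{C}3)$, namely that $\Pi_q$ is principal for every prime power $q$. Here I would invoke $(\mathcal{C}2)$, now available, which guarantees that only finitely many $t$-maximal ideals have norm $q$; hence $\Pi_q$ is a finite intersection of $t$-maximal (hence $t$-)ideals, and is therefore itself a $t$-ideal by property (6) of the $t$-operation. To apply statement (b) to $I = \Pi_q$ I must check that $\Pi_q D_\ppp$ is principal for every $\ppp \in t\Max(D)$. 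By the remark following the conditions, $\Pi_q D_\ppp = \ppp D_\ppp$ when $\ppp$ is $t$-maximal of norm $q$ and $\Pi_q D_\ppp = D_\ppp$ otherwise; the latter is principal trivially, and the former is principal because $\ppp$ is principal by (a) (equivalently, because the $t$-invertible ideal $\ppp$ is locally principal at $\ppp$). Thus $\Pi_q$ is a locally principal $t$-ideal, so (b) forces $\Pi_q$ to be principal, establishing $(\mathcal{C}3)$.

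Finally, since $(\mathcal{C}1)$, $(\mathcal{C}2)$, and $(\mathcal{C}5)$ hold, Theorem \ref{equivthm} supplies the equivalence of its eight conditions; and because $(\mathcal{C}3)$ is exactly condition (8) of that theorem, all eight conditions hold. In particular condition (1) holds, so $\Int(D)$ has a regular basis, which completes the argument.

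I expect no genuine obstacle here: the whole content lies in recognizing that the single hypothesis simultaneously feeds both inputs of Proposition \ref{equivcor}, and that the \emph{same} hypothesis, reapplied to the $t$-ideal $\Pi_q$, yields $(\mathcal{C}3)$. The one point demanding care is the order of operations---finite $t$-character (and hence $(\mathcal{C}2)$) must be established first, since it is precisely $(\mathcal{C}2)$ that makes $\Pi_q$ a finite intersection and thus a $t$-ideal to which the hypothesis can legitimately be applied.
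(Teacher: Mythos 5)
Your proposal is correct, and its first half coincides exactly with the paper's: both read the hypothesis as the two clauses (a) every $t$-maximal ideal is principal, hence $t$-invertible, giving $(\mathcal{C}5)$ and the standing hypothesis of Proposition \ref{equivcor}, and (b) every $t$-ideal that is principal locally at all $t$-maximal ideals is principal, hence $t$-invertible, giving the right-hand side of that proposition's equivalence; Proposition \ref{equivcor} then yields finite $t$-character together with $(\mathcal{C}1)$, $(\mathcal{C}2)$, and $(\mathcal{C}5)$. You diverge in the second half. The paper applies clause (b) not to a single ideal but to \emph{every} $t$-invertible $t$-ideal $I$: since any such $I$ satisfies $ID_\ppp$ principal for all $\ppp \in t\Max(D)$ (a fact recorded in Section \ref{sec:3}), clause (b) forces every $t$-invertible $t$-ideal to be principal, so the whole group $\Cl_t(D)$ --- and a fortiori $\PO(D)$ --- is trivial; this is condition (2) of Theorem \ref{equivthm}, and the equivalences of that theorem then deliver conditions (1) through (8), including $(\mathcal{C}3)$. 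You instead verify condition (8), which is $(\mathcal{C}3)$, directly, applying clause (b) only to the specific ideals $\Pi_q$; this obliges you to check (as you correctly do) that $(\mathcal{C}2)$ makes $\Pi_q$ a finite intersection of $t$-maximal ideals --- hence a $t$-ideal by property (6) of star operations --- and that its localizations are $\ppp D_\ppp$ or $D_\ppp$ as in the remark following the conditions, with $\ppp D_\ppp$ principal by clause (a). The trade-off: the paper's route is shorter and establishes the strictly stronger intermediate fact that the entire $t$-class group vanishes, whereas your route avoids class groups altogether and invokes the hypothesis only on the ideals $\Pi_q$; both are valid, and your closing remark about the order of operations --- that $(\mathcal{C}2)$ must be secured before $\Pi_q$ can legitimately be treated as a $t$-ideal to which the hypothesis applies --- is exactly the right point of care.
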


\begin{proof}
By Proposition \ref{equivcor}, $D$ is a domain of finite $t$-character such that every $t$-maximal ideal of $D$ is $t$-invertible.  Moreover, every $t$-invertible $t$-ideal of $D$ is principal, hence $\Cl_t(D)$ is trivial, so $\PO(D)$ is trivial.  The result therefore follows from Proposition \ref{equivcor}.
\end{proof}

An {\em H domain} is a domain in which every $v$-invertible ideal is $t$-invertible.  Every TV domain is an H domain of finite $t$-character; we do not know if the converse holds, even for PVMDs.  By the following corollary, Theorem \ref{equivthm} and Proposition \ref{equivcor} apply in particular to any H PVMD of finite $t$-character, hence to any TV PVMD.

\begin{corollary}\label{pvmdcor}  Let $D$ be a PVMD.  Then $D$ is an H domain if and only if every $t$-maximal ideal of $D$ is $t$-invertible.  Also, $D$ is of finite $t$-character if and only if every $t$-ideal $I$ of $D$ such that $I D_\ppp$ is principal for every $t$-maximal ideal $\ppp$ of $D$ is $t$-invertible.  If $D$ is an H PVMD of finite $t$-character (or if $D$ is a TV PVMD), then all of the hypotheses (conditions $(\mathcal{C}1)$, $(\mathcal{C}2)$, and $(\mathcal{C}5)$) of Theorem \ref{equivthm} hold. 
\end{corollary}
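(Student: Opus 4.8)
The corollary makes three assertions, and I would treat them in order, using throughout the PVMD facts recalled in Section \ref{sec:3}: $t = w$ on $\mathcal F(D)$, each localization $D_\ppp$ with $\ppp \in t\Max(D)$ is a valuation domain, every finitely generated ideal is $t$-invertible, and $D = \bigcap_{\ppp \in t\Max(D)} D_\ppp$ with $I_w = \bigcap_{\ppp \in t\Max(D)} ID_\ppp$ for every nonzero fractional ideal $I$.

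The heart of the first biconditional (Part A) is the lemma that \emph{every $t$-maximal ideal $\ppp$ of a PVMD is $v$-invertible}. The plan is to show $(D:_K \ppp\ppp^{-1}) = D$, which forces $(\ppp\ppp^{-1})_v = D$. I would first note that $z\ppp\ppp^{-1} \subseteq D$ holds iff $z\ppp^{-1} \subseteq (D:_K\ppp) = \ppp^{-1}$, so that $(D:_K \ppp\ppp^{-1})$ equals the multiplier ring $(\ppp^{-1} :_K \ppp^{-1})$, which contains $D$. Then I would localize: for each $\qqq \in t\Max(D)$, $\ppp^{-1}D_\qqq$ is a nonzero fractional ideal of the valuation domain $D_\qqq$, and since fractional ideals of a valuation domain are totally ordered under inclusion, its multiplier ring is $D_\qqq$ itself; hence $(\ppp^{-1}:_K\ppp^{-1}) \subseteq \bigcap_{\qqq} D_\qqq = D$. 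Granting the lemma, if $D$ is an H domain then each $t$-maximal $\ppp$, being $v$-invertible, is $t$-invertible. For the converse I would argue generally (no PVMD hypothesis is needed): if every $t$-maximal ideal is $t$-invertible, hence divisorial, then for any $v$-invertible $I$ the integral ideal $II^{-1}$ cannot be contained in a $t$-maximal ideal $\ppp$, since that would give $(II^{-1})_v \subseteq \ppp_v = \ppp \ne D$; because every proper integral $t$-ideal lies in a $t$-maximal ideal, it follows that $(II^{-1})_t = D$, so $I$ is $t$-invertible and $D$ is an H domain.

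For the second biconditional (Part B) I would prove the forward direction directly. Given a $t$-ideal $I$ with $ID_\ppp$ principal for every $\ppp \in t\Max(D)$ (reduced to $I \subseteq D$ by scaling), pick $0 \ne a \in I$; by finite $t$-character only finitely many $t$-maximal ideals $\ppp_1,\dots,\ppp_m$ contain $a$. Choosing $b_i \in I$ with $b_iD_{\ppp_i} = ID_{\ppp_i}$, I would set $J = (a,b_1,\dots,b_m) \subseteq I$ and verify $JD_\qqq = ID_\qqq$ at every $\qqq \in t\Max(D)$ — equal to $D_\qqq$ when $\qqq \notin \{\ppp_i\}$ because $a$ is then a unit in $D_\qqq$, and equal to $b_iD_{\ppp_i}$ at $\ppp_i$. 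Hence $I = I_w = J_w = J_t$, and as $J$ is finitely generated it is $t$-invertible, so $I$ is $t$-invertible. The reverse direction is the deep Bazzoni-type statement, and I would simply cite \cite[Corollary 2]{zaf}; this is the one step I do not attempt to reprove and is the genuine obstacle in the whole corollary.

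Finally, for Part C, if $D$ is an H PVMD of finite $t$-character, then Part A shows every $t$-maximal ideal of $D$ is $t$-invertible, so $(\mathcal C5)$ holds, while finite $t$-character gives $(\mathcal C1)$ and $(\mathcal C2)$ by Proposition \ref{normlemma}; equivalently, $D$ then meets the hypotheses of Proposition \ref{equivcor}, which yields $(\mathcal C1)$, $(\mathcal C2)$, $(\mathcal C5)$ directly. For a TV PVMD I would invoke the fact recorded just before the corollary that every TV domain is an H domain of finite $t$-character, reducing to the case just handled. Apart from the cited reverse implication of Part B, every step reduces to the localization theory of PVMDs and to Part A, so I expect the writeup to be short.
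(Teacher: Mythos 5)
There is a genuine gap, and it sits exactly at the pivot of your Part A. Your justification of the key lemma asserts that the multiplier ring of any nonzero fractional ideal of a valuation domain is the valuation domain itself ``since fractional ideals are totally ordered.'' That statement is false: if $\ppp'$ is a \emph{nonmaximal} prime of a valuation domain $V$, then $\ppp' V_{\ppp'} = \ppp'$, so $(\ppp' :_K \ppp') \supseteq V_{\ppp'} \supsetneq V$. This failure mode is live in the present setting, because the corollary imposes no height restriction: for $\qqq \in t\Max(D)$ the valuation domain $D_\qqq$ can have rank $> 1$ (TV PVMDs, which the corollary explicitly covers, can have $t$-maximal ideals of any finite or infinite height), so $\ppp^{-1}D_\qqq$ could a priori be one of these bad ideals. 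The conclusion you need, $(\ppp^{-1}D_\qqq :_K \ppp^{-1}D_\qqq) = D_\qqq$, is in fact true, but it requires using the specific shape of $\ppp^{-1}$, e.g.\ as follows. For $\qqq \neq \ppp$, distinct $t$-maximal ideals are incomparable, so pick $s \in \ppp \setminus \qqq$; then $z\ppp \subseteq D$ gives $z = (zs)/s \in D_\qqq$, so $\ppp^{-1} \subseteq D_\qqq$ and hence $\ppp^{-1}D_\qqq = D_\qqq$ (as $1 \in \ppp^{-1}$). For $\qqq = \ppp$ there are two cases: if $\ppp D_\ppp$ is not principal, then any $z \in \ppp^{-1} \setminus D_\ppp$ would satisfy $z^{-1} \in \ppp D_\ppp$ and $\ppp D_\ppp \subseteq z^{-1}D_\ppp \subseteq \ppp D_\ppp$, forcing $\ppp D_\ppp = z^{-1}D_\ppp$ to be principal, a contradiction; so $\ppp^{-1} \subseteq D_\ppp$ and again $\ppp^{-1}D_\ppp = D_\ppp$. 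If instead $\ppp D_\ppp = pD_\ppp$ is principal, then $z\ppp \subseteq D$ gives $zp \in D_\ppp$, so $D_\ppp \subseteq \ppp^{-1}D_\ppp \subseteq p^{-1}D_\ppp$; since $p^{-1}D_\ppp/D_\ppp \cong D_\ppp/\ppp D_\ppp$ is simple, $\ppp^{-1}D_\ppp$ equals $D_\ppp$ or $p^{-1}D_\ppp$, both principal with trivial multiplier ring. With this repair your identity $(D :_K \ppp\ppp^{-1}) = (\ppp^{-1} :_K \ppp^{-1}) \subseteq \bigcap_{\qqq \in t\Max(D)} D_\qqq = D$ goes through and the lemma (every $t$-maximal ideal of a PVMD is $v$-invertible), hence Part A, is correct.

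For comparison: the paper does not prove any of this internally---its entire proof is the citation of \cite[Proposition 1.5]{ell2} for the H-domain equivalence and \cite[Proposition 5]{zaf} for the finite $t$-character equivalence---so your proposal is substantially more self-contained, and your remaining pieces are sound: the converse in Part A (if every $t$-maximal ideal is $t$-invertible, hence divisorial, then $(II^{-1})_t = D$ for every $v$-invertible $I$) is correct and, as you note, needs no PVMD hypothesis; the easy direction of Part B ($I = I_w = J_w = J_t$ with $J$ finitely generated, using $t = w$ on a PVMD) is the standard argument and is correct; and Part C assembles exactly as the paper intends, via Proposition \ref{normlemma} (or Proposition \ref{equivcor}) and the fact that every TV domain is an H domain of finite $t$-character. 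One citation caution on Part B's deep direction: within this paper's referencing scheme, \cite[Corollary 2]{zaf} is invoked (in Proposition \ref{equivcor}) only under the hypothesis that every $t$-maximal ideal is $t$-invertible, which a bare PVMD need not satisfy; the statement you want for an arbitrary PVMD is the one the paper cites as \cite[Proposition 5]{zaf}.
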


\begin{proof}
This follows from \cite[Proposition 1.5]{ell2} and \cite[Proposition 5]{zaf}.
\end{proof}





\section{The global case}\label{sec:5}

In this section we extend Proposition \ref{presentation} to nonlocal domains. For the remainder of this paper we will use the following notation.

\begin{definition} Let $\mathcal{C}$ denote the class of integral domains $D$ satisfying the following four conditions.
\begin{enumerate}
\item[$(\mathcal{C}1)$] $D$ is polynomially L-regular.
\item[$(\mathcal{C}2)$] For any nonnegative integer $n$ there exist only finitely many $t$-maximal ideals $\ppp$ of $D$ with $N(\ppp) \leq n$.
\item[$(\mathcal{C}3)$] $\Pi_q$ is principal for every prime power $q$.\item[$(\mathcal{C}4)$] Every $t$-maximal ideal of $D$ of finite norm has finite height.
\end{enumerate}
\end{definition}

\begin{remark} \
\begin{enumerate}
\item By Theorem \ref{regbasisalg} $\Int(D)$ has a regular basis for any domain $D$ in the class $\mathcal{C}$.
\item Any finite dimensional local domain with principal maximal ideal is in the class $\mathcal{C}$.  More generally, a local domain is in the class $\mathcal{C}$ if and only if its maximal ideal has infinite norm  or else is principal and has finite height and norm.
\item A Krull domain $D$ is in the class $\mathcal{C}$ if and only if $\Int(D)$ has a regular basis.
\item Any H PVMD of finite $t$-character (or any TV PVMD) such that $\Int(D)$ has a regular basis satisfies conditions $(\mathcal{C}1)$, $(\mathcal{C}2)$, and $(\mathcal{C}3)$.
\item Any domain $D$ of finite character or of finite $t$-character satisfies conditions $(\mathcal{C}1)$ and $(\mathcal{C}2)$.
\end{enumerate}
\end{remark}

For any domain $D$ in the class $\mathcal{C}$, the $D$-algebra $\Int(D)$ has a presentation by generators and relations as in the following theorem.

\begin{proposition}\label{polyaprop}
Let $D$ be an integral domain in the class $\mathcal{C}$.  For each $q$ let $\pi_q$ be a generator of $\Pi_q$ and let $F_q = \frac{X^q - X}{\pi_q} \in \Int(D)$.   Then the unique $D$-algebra homomorphism
$$D[\{X, X_{q,k}  :  q \textup{ is a prime power and } k \in \ZZ_{\geq 0}\}] \longrightarrow \Int(D)$$
$$X \longmapsto X$$
$$X_{q,k} \longmapsto F_q^{\circ k}$$
is surjective, and its kernel is equal to the ideal $J$ generated by $X_{q,0} - X$ and $X_{q,k}^q - X_{q,k} - \pi_q X_{q,{k+1}}$ for all $q,k$.
\end{proposition}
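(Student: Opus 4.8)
The plan is to verify the two easy containments and then compute the kernel by localizing at every prime of $D$ and invoking the local case, Proposition \ref{presentation}. Write $R = D[\{X,X_{q,k}\}]$, let $\varphi\colon R\to\Int(D)$ be the given homomorphism, and let $J$ be the ideal in the statement. First I would check $J\subseteq\ker\varphi$: the generator $X_{q,0}-X$ maps to $F_q^{\circ 0}-X = 0$, and since $F_q^{\circ(k+1)} = F_q(F_q^{\circ k}) = \frac{(F_q^{\circ k})^q-F_q^{\circ k}}{\pi_q}$, the generator $X_{q,k}^q-X_{q,k}-\pi_q X_{q,k+1}$ maps to $0$. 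Thus $\varphi$ factors as $\overline\varphi\colon R/J\to\Int(D)$. Surjectivity of $\varphi$ (hence of $\overline\varphi$) follows from Theorem \ref{regbasisalg}: the image is a $D$-subalgebra containing $X$ and every $F_q^{\circ k}$, hence every product $F_{k,n}$ and every $G_n$, and $\{G_n\}$ is a $D$-module basis of $\Int(D)$.

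It remains to prove $\overline\varphi$ injective, and since localization is exact it suffices to show $\overline\varphi_\ppp\colon (R/J)_\ppp\to\Int(D)_\ppp$ is injective for \emph{every} prime $\ppp$ of $D$; condition $(\mathcal{C}1)$ identifies $\Int(D)_\ppp$ with $\Int(D_\ppp)$. The observation that makes localizing at all primes tractable is this: $\pi_q$ is a unit of $D_\ppp$ unless $\ppp$ is itself a $t$-maximal ideal of norm $q$. Indeed, a prime of finite norm is maximal (a finite integral domain is a field), so any $t$-maximal $\qqq$ with $N(\qqq)=q$ and $\qqq\subseteq\ppp$ must equal $\ppp$; using $(\mathcal{C}2)$ to write $\Pi_q$ as a finite product of such $\qqq$, the containment $\Pi_q=\pi_q D\subseteq\ppp$ therefore forces $\ppp$ to be $t$-maximal of norm $q$.

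Accordingly I split into two cases. If $\ppp$ is not $t$-maximal of finite norm, then every $\pi_q$ is a unit in $D_\ppp$, so each relation solves for $X_{q,k+1}$ and, together with $X_{q,0}=X$, expresses every $X_{q,k}$ as a polynomial in $X$; hence $(R/J)_\ppp\cong D_\ppp[X]$, which equals $\Int(D_\ppp)$ by Theorem \ref{factprop}(4), and $\overline\varphi_\ppp$ is this isomorphism. If $\ppp$ is $t$-maximal of finite norm $q_0$, then $\pi_q$ is a unit for all $q\neq q_0$, while $\Pi_{q_0}D_\ppp=\ppp D_\ppp=\pi_{q_0}D_\ppp$ is principal by $(\mathcal{C}2)$ and $(\mathcal{C}3)$; eliminating the variables for $q\neq q_0$ as above and using $X_{q_0,0}=X$ presents $(R/J)_\ppp$ as $D_\ppp[X_{q_0,0},X_{q_0,1},\dots]$ modulo the relations $X_{q_0,k}^{q_0}-X_{q_0,k}-\pi_{q_0}X_{q_0,k+1}$. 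Since $D_\ppp$ is a local domain with principal maximal ideal and finite residue field of order $q_0$, and has finite Krull dimension by $(\mathcal{C}4)$, Proposition \ref{presentation} identifies this ring with $\Int(D_\ppp)$ via exactly $\overline\varphi_\ppp$. Thus $\overline\varphi_\ppp$ is an isomorphism for every prime $\ppp$, so $\ker\overline\varphi=0$ and $\ker\varphi=J$.

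The step I expect to be the main obstacle is the bookkeeping in the localized presentation: one must check that after inverting the units $\pi_q$ (for $q\neq q_0$) the infinitely many families of generators collapse to precisely the single family appearing in Proposition \ref{presentation}, and that the resulting isomorphism is genuinely the localization of $\overline\varphi$. The elementary remark that finite-norm primes are maximal is what makes the two cases exhaustive and lets me localize at \emph{all} primes simultaneously; this is what avoids having to prove separately that $R/J$ is $D$-torsion-free (equivalently, that $R/J\hookrightarrow (R/J)\otimes_D K=K[X]$), which would otherwise be the sticking point since localizing only at $t$-maximal primes leaves potential torsion unaccounted for.
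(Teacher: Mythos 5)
Your proposal is correct and follows essentially the same route as the paper's proof: localize at every prime $\ppp$ of $D$, observe that $\pi_q$ becomes a unit unless $\ppp$ is $t$-maximal of norm $q$, and in the two resulting cases identify $\overline\varphi_\ppp$ with the isomorphism onto $D_\ppp[X]$ or with the map of Proposition \ref{presentation} (using $(\mathcal{C}4)$ for finite dimensionality). The only cosmetic differences are that you derive surjectivity separately from Theorem \ref{regbasisalg} and spell out the maximality argument behind ``$\Pi_q \subseteq \ppp$ forces $\ppp$ $t$-maximal of norm $q$,'' both of which the paper leaves implicit.
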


\begin{proof}
Let $\varphi$ denote the given $D$-algebra homomorphism, and let $$A = D[\{X, X_{q,k} : q \textup{ is a prime power and } k \in \ZZ_{\geq 0}]/J.$$  The homomorphism $\varphi$ induces
a $D$-algebra homomorphism $$\psi: A \longrightarrow \Int(D).$$  We show that $\psi$ is an isomorphism.  Let $\ppp$ be a prime ideal of $D$.  If $\ppp$ is not $t$-maximal of finite norm, then $\pi_q$ is a unit in $D_\ppp$ for all $q$  since $\Pi_{q} \nsubseteq \ppp D_\ppp$, and $A_\ppp \cong D_\ppp[X]$. Thus, by \cite[Lemma 2.2]{ell2}, the localization $\psi_{\ppp}$ of $\psi$ at $\ppp$ is the isomorphism
$$A_\ppp \longrightarrow \Int(D)_\ppp = \Int(D_\ppp) = D_\ppp[X],$$
Suppose, on the other hand, that $\ppp$ is $t$-maximal of finite norm, say $|D/\ppp| = q$.  Then $\pi_q D_\ppp = \ppp D_\ppp$, and $\pi_{q^\prime}$ is a unit in $D_\ppp$ for all prime powers $q^\prime \neq q$.  It follows, then, that $$A_\ppp \cong D_\ppp[X_0, X_1, X_2, \ldots]/I,$$ where $I$ is defined as in Proposition
\ref{presentation}.  Moreover, $\psi_\ppp$ is the same as the homomorphism
$$D_\ppp[X_0, X_1, X_2, \ldots]/I \longrightarrow \Int(D)_\ppp = \Int(D_\ppp)$$
$$X_k \longmapsto F_q^{\circ k},$$
of Proposition \ref{presentation}.  By that proposition, then, $\psi_\ppp$ is an isomorphism.  Therefore $\psi_\ppp$ is an isomorphism for all prime ideals $\ppp$ of $D$, so $\psi$ is an isomorphism.
\end{proof}


Now, let $D$ be a domain and $\XX$ a set.  There exists a unique $D$-algebra homomorphism 
$$\theta_\XX: \bigotimes_{X \in \XX} \Int(D) \longrightarrow \Int(D^\XX)$$ sending $X \in \Int(D)$ to $X \in \Int(D^\XX)$ for all $X \in \XX$,
where the tensor product is over $D$.  By \cite[Proposition 6.8(a) and 6.10(d)]{jess}, the map $\theta_\XX$ is an isomorphism if $\Int(D)$ is free as a $D$-module or if $D$ is polynomially L-regular and $\Int(D)$ is locally free as a $D$-module.  The isomorphisms $\theta_\XX$ allow us to extend Propositions \ref{presentation} and \ref{polyaprop} to multivariate integer-valued polynomials, as follows.

\begin{proposition}\label{presentation2}
Let $D$ be a local integral domain with principal maximal ideal $\pi D$ and finite residue field of order $q$.  Let $F_q = \frac{X^q - X}{\pi} \in \Int(D)$.  Let $\XX = \{X_i\}_{i \in I}$ be a set of variables.   The unique $D$-algebra homomorphism
$$\varphi: {\begin{array}{rrr} D[\{X_{i,k} : i \in I, k \in \ZZ_{\geq 0}\}] & \longrightarrow & \Int(D^\XX) \\
X_{i,k}  &\longmapsto & F_q^{\circ k}(X_i)
\end{array}}$$
is surjective, and if $D$ has finite Krull dimension then $\ker \varphi$ is equal to the ideal generated by $X_{i,k}^q - X_{i,k} - \pi X_{i,k+1}$ for
all $i,k$.
\end{proposition}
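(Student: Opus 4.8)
The plan is to reduce everything to the univariate Proposition \ref{presentation} by tensoring its presentation over all the variables and transporting along the isomorphism $\theta_\XX$. First I would observe that, by Lemma \ref{prinlemma}, $\Int(D)$ has a regular basis and is therefore free as a $D$-module; hence by \cite[Proposition 6.8(a)]{jess} the map
$$\theta_\XX : \bigotimes_{i \in I} \Int(D) \longrightarrow \Int(D^\XX)$$
is a $D$-algebra isomorphism, carrying the univariate generator $X$ of the $i$-th tensor factor to $X_i$. This step uses only the regular basis, not finite Krull dimension.

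The central fact I would use is that the tensor product over $D$ of commutative $D$-algebras is their coproduct, so that tensoring algebras presented by generators and relations yields the algebra presented by the disjoint union of those generators and relations. Applying Proposition \ref{presentation} to the $i$-th factor gives a surjection $\varphi_i : D[\{X_{i,k} : k \in \ZZ_{\geq 0}\}] \longrightarrow \Int(D)$ with $X_{i,k} \longmapsto F_q^{\circ k}$, whose kernel, once $D$ has finite Krull dimension, is the ideal $I^{(i)}$ generated by $X_{i,k}^q - X_{i,k} - \pi X_{i,k+1}$ for all $k$. Since $\bigotimes_{i \in I} D[\{X_{i,k} : k\}] = D[\{X_{i,k} : i \in I,\, k \in \ZZ_{\geq 0}\}]$, tensoring the maps $\varphi_i$ produces a surjection $\bigotimes_i \varphi_i$ whose kernel is the ideal $J$ generated by all the relations $X_{i,k}^q - X_{i,k} - \pi X_{i,k+1}$.

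Finally I would assemble the commutative square with top edge $\bigotimes_i \varphi_i$, right edge $\theta_\XX$, left edge the identification of polynomial rings above, and bottom edge $\varphi$; chasing the generator $X_{i,k}$ confirms commutativity, since $\theta_\XX$ sends $F_q^{\circ k}$ in the $i$-th factor to $F_q^{\circ k}(X_i)$. Because $\theta_\XX$ and the left identification are isomorphisms and $\bigotimes_i \varphi_i$ is surjective, $\varphi$ is surjective with no dimension hypothesis; and when $D$ has finite Krull dimension the isomorphism $\bigotimes_i \Int(D) \cong D[\{X_{i,k}\}]/J$ identifies $\varphi$ with the quotient map modulo $J$, giving $\ker \varphi = J$. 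The one point genuinely needing care is the ``union of presentations'' description of the tensor product when $I$ is infinite; I would handle this by writing the infinite tensor product as the filtered colimit of its finite sub-tensor-products --- every polynomial, and every element of $\Int(D^\XX)$, involves only finitely many variables --- and noting that both the coproduct presentation and $\theta_\XX$ are compatible with these colimits.
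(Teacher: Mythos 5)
Your proof is correct and is essentially the paper's own route: the paper proves Proposition \ref{presentation2} only implicitly, by remarking that the isomorphisms $\theta_\XX$ (which apply here because $\Int(D)$ has a regular basis by Lemma \ref{prinlemma} and is therefore free) extend Proposition \ref{presentation} to many variables, which is exactly your tensor-of-presentations argument. Your filtered-colimit treatment of the infinite tensor product correctly fills in the one detail the paper leaves unstated.
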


\begin{theorem}\label{presentation3}
Let $D$ be an integral domain in the class $\mathcal{C}$.   
For each $q$ let $\pi_q$ be a generator of $\Pi_q$ and let $F_q = \frac{X^q - X}{\pi_q} \in \Int(D)$.  Let $\XX = \{X_i\}_{i \in I}$ be a set of variables.  The unique $D$-algebra homomorphism
$$D[\{X_i, X_{i,q,k} : i \in I, q \textup{ is a prime power, and } k \in \ZZ_{\geq 0}\}] \longrightarrow \Int(D^\XX)$$
$$X_i \longmapsto X_i$$
$$X_{i,q,k} \longmapsto F_q^{\circ k}(X_i)$$
is surjective, and its kernel is equal to the ideal generated by $X_{i,q,0} - X_i$ and $X_{i,q,k}^q - X_{i,q,k} - \pi_q X_{i,q,{k+1}}$ for all $i,q,k$.
\end{theorem}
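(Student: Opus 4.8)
The plan is to deduce the multivariate presentation from the single-variable case, Proposition \ref{polyaprop}, by exploiting the tensor-product isomorphism $\theta_\XX$ together with the fact that the tensor product over $D$ is the coproduct in the category of commutative $D$-algebras.

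First I would record that, since $D$ lies in the class $\mathcal{C}$, Theorem \ref{regbasisalg} shows $\Int(D)$ has a regular basis and is therefore free as a $D$-module. Hence the canonical map
$$\theta_\XX: \bigotimes_{i \in I} \Int(D) \longrightarrow \Int(D^\XX)$$
(the tensor product over $D$ with one factor for each $i \in I$, the variable of the $i$th factor being $X_i$) is an isomorphism. Under $\theta_\XX$ the element $X$ of the $i$th tensor factor maps to $X_i$, and since $F_q^{\circ k}$ is a polynomial expression in $X$, the element $F_q^{\circ k}$ of the $i$th factor maps to $F_q^{\circ k}(X_i) \in \Int(D^\XX)$; this matches the generators named in the statement.

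Next I would invoke the standard description of the coproduct of quotients of polynomial algebras: if $\{A_i\}_{i \in I}$ is a family of commutative $D$-algebras with presentations $A_i \cong D[S_i]/J_i$ with the generating sets $S_i$ pairwise disjoint, then
$$\bigotimes_{i \in I} A_i \cong D\!\left[\bigsqcup_{i \in I} S_i\right]\!\Big/\Big(\textstyle\sum_{i \in I} J_i\Big),$$
where each $J_i$ is regarded as an ideal of the large polynomial ring via the inclusion $D[S_i] \hookrightarrow D[\bigsqcup_i S_i]$. This is a formal consequence of universal properties: $D[\bigsqcup_i S_i]$ is the free commutative $D$-algebra on $\bigsqcup_i S_i$, hence the coproduct of the free algebras $D[S_i]$, and forming a quotient by a prescribed ideal commutes with the coproduct.

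Finally I would apply this with $A_i = \Int(D)$ carrying the presentation of Proposition \ref{polyaprop}, taking $S_i = \{X_i, X_{i,q,k} : q \textup{ a prime power},\ k \in \ZZ_{\geq 0}\}$ and $J_i$ generated by $X_{i,q,0} - X_i$ and $X_{i,q,k}^q - X_{i,q,k} - \pi_q X_{i,q,k+1}$. Composing the resulting presentation of $\bigotimes_i \Int(D)$ with the isomorphism $\theta_\XX$ and the generator identification of the second step yields precisely the claimed presentation of $\Int(D^\XX)$. I expect the only point genuinely requiring care to be the bookkeeping in this last step—matching the disjoint copies $S_i, J_i$ to the indexed generators $X_i, X_{i,q,k}$ and relations of the statement—rather than any conceptual obstacle, since the substantive work (freeness of $\Int(D)$ and the single-variable kernel computation) is already supplied by Theorem \ref{regbasisalg} and Proposition \ref{polyaprop}.
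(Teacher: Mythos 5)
Your proof is correct and is exactly the argument the paper intends: the paper gives no separate proof of Theorem \ref{presentation3}, but the discussion preceding it states that the isomorphisms $\theta_\XX$ (applicable here since $\Int(D)$ has a regular basis by Theorem \ref{regbasisalg}, hence is free as a $D$-module) serve to extend Proposition \ref{polyaprop} to the multivariate case. Your writing out of the coproduct-of-presentations bookkeeping, together with the observation that the restriction of $\theta_\XX$ to the $i$th factor is the substitution $f \longmapsto f(X_i)$ and so carries $F_q^{\circ k}$ to $F_q^{\circ k}(X_i)$, simply makes explicit what the paper leaves implicit.
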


\section{Quotients of integer-valued polynomial rings}\label{sec:6}

A ring $A$ is said to be {\it binomial} if $A$ is $\ZZ$-torsion-free and is closed under the operation $x \longmapsto \frac{x(x-1)(x-2)\cdots(x-n+1)}{n!}$ on $A \otimes_\ZZ \QQ$ for every positive integer $n$.   For example, any $\QQ$-algebra is binomial; any localization or completion of $\ZZ$ is binomial; and the domain $\Int(D^\XX)$ is binomial for any binomial domain $D$ and any set $\XX$. Binomial rings were introduced by Philip Hall in his groundbreaking work \cite{hal} on nilpotent groups.  Hall proved the existence of an action of any binomial ring on a class of nilpotent groups, generalizing exponentiation of elements of an abelian group by the integers and analogous to exponentiation of elements of a uniquely divisible group by the rational numbers.  In \cite{ell7}, it is shown that a binomial ring is equivalently (1) a $\lambda$-ring on which all Adams operations are the identity; (2) a $\ZZ$-torsion-free ring $A$ such that the Frobenius endomorphism of $A/pA$ is the identity for every prime number $p$; (3) a $\ZZ$-torsion-free ring isomorphic to a quotient of a (possibly infinite) tensor power of $\Int(\ZZ)$; and (4) a $\ZZ$-torsion-free ring isomorphic to a quotient of $\Int(\ZZ^\XX)$ for some set $\XX$.

In this section and the next we generalize the equivalences (2) through (4) above to domains more general than $\ZZ$.  Let $D$ be an integral domain and $A$ a $D$-algebra.   Following \cite{ell}, we say that $A$ is {\it weakly polynomially complete}, or {\it WPC}, if for every $a \in A$ there is a $D$-algebra homomorphism $\Int(D) \longrightarrow A$ sending $X$ to $a$.   Any quotient of a WPC $D$-algebra is WPC.   If $A$ is a domain extension of $D$, then $A$ is WPC if and only if $\Int(D) \subseteq \Int(A)$. A $\ZZ$-torsion-free ring $A$ is a WPC $\ZZ$-algebra if and only if $A$ is a binomial; and for any number field $K$, the localization $S^{-1}\mathcal{O}_K$ of $\mathcal{O}_K$ at the multiplicative subset $S$ of $\ZZ$ generated by the set of prime numbers $p$ that do not split completely in $\mathcal{O}_K$ is the smallest WPC extension of $\ZZ$ containing $\mathcal{O}_K$ \cite[Example 7.3(3)]{jess2}.  Characterizations of the divisorial (or flat) weakly polynomially complete extensions of any Krull domain are given in \cite[Theorem 1.2]{ell2}.   We remark that, if $D$ is a principal ideal domain with finite residue fields, then $\Int(D)$ left-represents a right adjoint for the inclusion functor from $D$-torsion-free WPC $D$-algebras to $D$-algebras \cite[Theorem 1.6]{ell}.

The problem we consider is to characterize the WPC $D$-algebras if $D$ is a Krull domain (or Dedekind domain).  Let us say that $A$ is {\it almost polynomially complete}, or {\it APC}, if for every set $\XX$ and for any $(a_X)_{X \in \XX} \in A^\XX$ there exists a $D$-algebra homomorphism $\Int(D^\XX) \longrightarrow A$ sending $X$ to $a_X$ for all $X \in \XX$.   In other words, $A$ is APC if and only if $A$ is isomorphic as a $D$-algebra to a quotient of $\Int(D^\XX)$ for some set $\XX$. By \cite[Propositions 7.4 and 7.7]{jess}, if $A$ is a domain extension of $D$, then $A$ is APC if and only if $A$ is an {\it almost polynomially complete extension} of $D$ in the sense of \cite[Section 7]{jess}, that is, $\Int(D^n) \subseteq \Int(A^n)$ for all positive integers $n$.  Any quotient of an APC $D$-algebra is APC.  Clearly any APC $D$-algebra is WPC; we suspect that the converse does not hold but do not know a counterexample.  However, by \cite[Theorem 3.11]{ell2} and the universal property of tensor products, we have the following.

\begin{proposition}
Let $D$ be an integral domain and $\XX$ a set.  There exists a unique $D$-algebra homomorphism 
$$\theta_\XX: \bigotimes_{X \in \XX} \Int(D) \longrightarrow \Int(D^\XX)$$ sending $X \in \Int(D)$ to $X \in \Int(D^\XX)$ for all $X \in \XX$,
where the tensor product is over $D$.   If $\theta_\XX$ is an isomorphism for all finite sets $\XX$, which holds, for example, if $\Int(D)$ is free as a $D$-module or if $D$ is polynomially L-regular and $\Int(D)$ is locally free as a $D$-module, 
then $\theta_\XX$ is an isomorphism for all sets $\XX$, and a $D$-algebra $A$ is WPC if and only if it is APC.
\end{proposition}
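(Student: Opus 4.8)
The plan is to dispatch the three assertions of the proposition---existence and uniqueness of $\theta_\XX$, the passage from finite $\XX$ to arbitrary $\XX$, and the equivalence of the WPC and APC conditions---with the middle assertion carrying the real content.

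First I would obtain $\theta_\XX$ from the universal property of the tensor product. By construction $\bigotimes_{X \in \XX} \Int(D)$ is the coproduct of the family $(\Int(D))_{X \in \XX}$ in commutative $D$-algebras, so to give a $D$-algebra homomorphism out of it is to give, for each $X \in \XX$, a $D$-algebra homomorphism $\Int(D) \to \Int(D^\XX)$. For each $X$, the substitution sending the variable of the $X$-th copy of $\Int(D)$ to $X$ extends the inclusion $K[X] \to K[\XX]$ and carries $\Int(D)$ into $\Int(D^\XX)$, since the value of a one-variable integer-valued polynomial at a point of $D^\XX$ depends only on the corresponding coordinate; this is the canonical inclusion $\Int(D) \hookrightarrow \Int(D^\XX)$. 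These component maps assemble into $\theta_\XX$. Uniqueness holds because the coproduct is generated by its factors and each component map, once extended along $\Int(D) \otimes_D K = K[X]$ to a $K$-algebra map $K[X] \to K[\XX]$, is determined by the image of its variable.

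Next, and this is the crux, I would exhibit both sides of $\theta_\XX$ as filtered colimits over the directed set of finite subsets $\YY \subseteq \XX$. The source is, by the very definition of the infinite tensor product, the filtered colimit $\varinjlim_\YY \bigotimes_{X \in \YY} \Int(D)$, with transition maps inserting $1 \in \Int(D)$ in the new factors. For the target I would verify $\Int(D^\XX) = \varinjlim_\YY \Int(D^\YY) = \bigcup_\YY \Int(D^\YY)$: any $f \in K[\XX]$ involves only finitely many variables, say those in $\YY$, and $f$ is integer-valued on $D^\XX$ if and only if it is integer-valued on $D^\YY$, because the value of $f$ at a point of $D^\XX$ depends only on its $\YY$-coordinates. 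The maps $\theta_\YY$ are compatible with both colimit structures, so $\theta_\XX = \varinjlim_\YY \theta_\YY$, and a filtered colimit of isomorphisms is an isomorphism. Hence the standing hypothesis that $\theta_\YY$ is an isomorphism for every finite $\YY$ forces $\theta_\XX$ to be one as well. The parenthetical sufficient conditions are exactly the finite-$\XX$ inputs: $\theta_\YY$ is an isomorphism for finite $\YY$ when $\Int(D)$ is free, or when $D$ is polynomially L-regular and $\Int(D)$ is locally free, by \cite[Propositions 6.8(a) and 6.10(d)]{jess} and \cite[Theorem 3.11]{ell2}.

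Finally I would derive the equivalence of WPC and APC from the isomorphism just obtained. That APC implies WPC is immediate on taking $\XX$ to be a singleton. Conversely, suppose $A$ is WPC, let $\XX$ be a set, and let $(a_X)_{X \in \XX} \in A^\XX$. For each $X$ the WPC condition provides a $D$-algebra homomorphism $\Int(D) \to A$ sending the variable to $a_X$; by the coproduct property these assemble into a single $D$-algebra homomorphism $g \colon \bigotimes_{X \in \XX} \Int(D) \to A$ carrying the $X$-th variable to $a_X$. Then $g \circ \theta_\XX^{-1} \colon \Int(D^\XX) \to A$ sends $X$ to $a_X$ for every $X \in \XX$, witnessing that $A$ is APC. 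I expect the only real obstacle to be the colimit identification of the middle step---pinning down that the infinite tensor product and $\Int(D^\XX)$ are honestly the filtered colimits of their finite pieces and that the squares relating $\theta_\YY$ to the transition maps commute; once that bookkeeping is recorded, everything else is a formal consequence of universal properties.
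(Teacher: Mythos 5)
Your proposal is correct and follows essentially the same route as the paper, which offers no written proof but justifies the proposition by ``the universal property of tensor products'' together with the citations to \cite[Theorem 3.11]{ell2} and \cite[Propositions 6.8(a) and 6.10(d)]{jess}. Your filtered-colimit identification of $\bigotimes_{X \in \XX} \Int(D)$ and $\Int(D^\XX) = \bigcup_\YY \Int(D^\YY)$ over finite subsets $\YY \subseteq \XX$, and the composition $g \circ \theta_\XX^{-1}$ for the WPC-implies-APC direction, are exactly the standard details these citations delegate, and your bookkeeping (torsion-freeness of $\Int(D^\XX)$ for uniqueness, compatibility of the squares relating $\theta_\YY$ to the transition maps) is sound.
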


As in \cite[Section 1]{jess}, we say that a domain $A$ containing $D$ is a {\it polynomially complete extension of $D$} if $D$ is a polynomially dense subset of $A$. By \cite[Proposition 7.2]{jess}, every polynomially complete extension of $D$ is APC, and if $D$ is infinite then a $D$-algebra $A$ is APC if and only if $A$ is isomorphic as a $D$-algebra to a quotient of some polynomially complete extension of $D$.  On the other hand, if $D$ is finite, or more generally if $\Int(D) = D[X]$, then by \cite[Lemma 7.1]{jess} every $D$-algebra is APC.

The WPC and polynomially complete extensions of a domain are studied, for example, in \cite[Sections 5 and 6]{cah1}, \cite[Section IV.3]{cah}, \cite{ell2}, \cite{ger}, and \cite{jess}.

The following result as a special case characterizes the WPC algebras over a discrete valuation domain.

\begin{proposition}\label{dvd}
Let $D$ be a finite dimensional local domain with principal maximal ideal $\pi D$ and finite residue field of order
$q$.  Let $A$ be a $D$-algebra.  Then $A$ is WPC if and only if $a^q \equiv a \ (\mod \pi A)$ for all $a \in A$.
\end{proposition}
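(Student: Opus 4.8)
The plan is to read off both implications from the $D$-algebra presentation of $\Int(D)$ supplied by Proposition \ref{presentation}, which applies here because $D$ is finite dimensional. Under that presentation $\Int(D) \cong D[X_0, X_1, X_2, \ldots]/I$, where $I$ is generated by the elements $X_k^q - X_k - \pi X_{k+1}$ for $k \in \ZZ_{\geq 0}$ and $X_k \mapsto F_q^{\circ k}$; in particular $X_0 \mapsto F_q^{\circ 0} = X$.

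For the forward implication, suppose $A$ is WPC and let $a \in A$ be arbitrary. By definition there is a $D$-algebra homomorphism $\varphi: \Int(D) \longrightarrow A$ with $\varphi(X) = a$. In $\Int(D)$ one has the identity $\pi F_q = X^q - X$, so applying $\varphi$ yields $\pi \varphi(F_q) = a^q - a$, whence $a^q - a \in \pi A$. This half uses only the membership $F_q \in \Int(D)$ and needs nothing from the relation computation.

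For the reverse implication, assume $a^q \equiv a \ (\mod \pi A)$ for every $a \in A$ and fix $a \in A$; I want a $D$-algebra homomorphism $\Int(D) \longrightarrow A$ carrying $X$ to $a$. By the presentation it suffices to build a $D$-algebra homomorphism $\psi: D[X_0, X_1, X_2, \ldots] \longrightarrow A$ with $\psi(X_0) = a$ that annihilates $I$, for then $\psi$ descends to the quotient $\Int(D)$ and sends $X$ to $a$. I would define the images $a_k := \psi(X_k)$ recursively: set $a_0 = a$, and given $a_k$, use the hypothesis applied to $a_k$ to select some $a_{k+1} \in A$ with $\pi a_{k+1} = a_k^q - a_k$. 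Then $\psi$ kills each generator $X_k^q - X_k - \pi X_{k+1}$ of $I$ by construction, hence kills $I$, completing the argument.

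The only delicate point lies in the reverse direction. Since $A$ need not be $\pi$-torsion-free, the solution $a_{k+1}$ of $\pi a_{k+1} = a_k^q - a_k$ need not be unique, so at each stage one merely chooses one such solution (a countable sequence of choices). This causes no difficulty, because the presentation demands only that the images of the $X_k$ satisfy the listed relations, not that they be canonically determined. Thus essentially all of the work is done by Proposition \ref{presentation}: its surjectivity underlies the forward direction and its identification of the relation ideal $I$ underlies the reverse direction.
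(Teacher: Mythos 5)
Your proposal is correct and follows essentially the same route as the paper: the forward direction applies a homomorphism $\Int(D)\longrightarrow A$ to the identity $\pi F_q = X^q - X$, and the reverse direction recursively chooses $a_{k+1}$ with $\pi a_{k+1} = a_k^q - a_k$ (applying the hypothesis to each $a_k$, not just to $a$) and descends through the presentation of Proposition \ref{presentation}, whose finite-dimensionality hypothesis is available here. Your explicit remark that the non-uniqueness of $a_{k+1}$ is harmless is a point the paper leaves implicit, but the argument is the same.
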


\begin{proof}
Suppose first that $A$ is WPC.  Let $a \in A$, and let $\varphi: \Int(D) \longrightarrow A$ be a $D$-algebra homomorphism sending $X$ to $a$.  Since the polynomial $f = \frac{X^q - X}{\pi}$ lies in $\Int(D)$, one has $a^q - a  = \varphi(X^q - X) = \pi \varphi(f) \in \pi A$, so $a^q \equiv a \ (\mod \pi A)$.

Conversely, suppose that $a^q \equiv a \ (\mod \pi A)$ for all $a \in A$.  
Let  $a \in A$.  Define an infinite sequence $a_0, a_1, a_2, \ldots$ recursively as follows.  Let $a_0 = a$, and
let $a_{k+1}$ be any element of $A$ such that $a_{k}^q - a_{k} = \pi a_{k+1}$.  Consider the unique $D$-algebra homomorphism
$$\varphi: {\begin{array}{rrr} D[X_0, X_1, X_2, \ldots] & \longrightarrow & A \\
 X_{k} & \longmapsto & a_k.
\end{array}}$$
The polynomials $X_{k}^q - X_{k} - \pi X_{k+1}$ lie in $\ker \varphi$ for all $k$.  By  Proposition \ref{presentation}, therefore, $\varphi$ induces a $D$-algebra homomorphism from $\Int(D)$ into $A$ sending $X$ to $a$.  Thus $A$ is WPC.
\end{proof}

Next, let $\mathcal{D}$ be the class of domains $D$ such that $\Pi_q = \pi_q D$ is principal for all $q$ and $\Int(D)$ has a $D$-algebra presentation as in Proposition \ref{polyaprop}.  Then $\mathcal{C} \subseteq \mathcal{D}$ by Proposition \ref{polyaprop}.  We do not know if the domains $D$ satisfying conditions $(\mathcal{C}1)$, $(\mathcal{C}2)$, and $(\mathcal{C}3)$ are  in the class $\mathcal{D}$.

\begin{theorem}\label{polyathm}
For any integral domain $D$ in the class $\mathcal{D}$ (or in the class $\mathcal{C}$), the following conditions are equivalent.
\begin{enumerate}
\item $A$ is a WPC $D$-algebra.
\item $a^{N(\ppp)} \equiv a \ (\mod \ppp A)$ for all $a \in A$ and for every $t$-maximal ideal $\ppp$ of $D$ of finite norm $N(\ppp)$.
\item $a^q \equiv a \ (\mod \Pi_q A)$ for all $a \in A$ and for every prime power $q$.
\end{enumerate}
Moreover, if $D$ is in the class $\mathcal{C}$, then the above conditions are equivalent to the following.
\begin{enumerate}
\item[(4)] $A$ is isomorphic as a $D$-algebra to a quotient of $\Int(D^\XX)$ for some set $\XX$.
\end{enumerate}
\end{theorem}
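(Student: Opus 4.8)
The plan is to establish the equivalences $(1) \Leftrightarrow (3)$ and $(2) \Leftrightarrow (3)$ for any $D$ in the class $\mathcal{D}$, using the explicit presentation of Proposition \ref{polyaprop}, and then to adjoin condition $(4)$ for $D$ in the smaller class $\mathcal{C}$ by exploiting that $\Int(D)$ is free there. Throughout I would use that $\Pi_q = \pi_q D$ is principal for $D \in \mathcal{D}$, so that $\Pi_q A = \pi_q A$ for any $D$-algebra $A$.

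For $(1) \Leftrightarrow (3)$ I would argue exactly as in Proposition \ref{dvd}, but globally. If $A$ is WPC, then for each $a \in A$ a $D$-algebra homomorphism $\varphi \colon \Int(D) \to A$ with $\varphi(X) = a$ sends $F_q = (X^q - X)/\pi_q \in \Int(D)$ to some element of $A$, whence $a^q - a = \pi_q \varphi(F_q) \in \pi_q A = \Pi_q A$; this is $(3)$. Conversely, assuming $(3)$, I would construct the required homomorphism from the presentation $\Int(D) \cong D[\{X, X_{q,k}\}]/J$ of Proposition \ref{polyaprop}: set $b_{q,0} = a$, and since $(3)$ gives $b^q - b \in \pi_q A$ for \emph{every} element $b \in A$, recursively choose $b_{q,k+1}$ with $b_{q,k}^q - b_{q,k} = \pi_q b_{q,k+1}$. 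The assignments $X \mapsto a$ and $X_{q,k} \mapsto b_{q,k}$ then kill every generator of $J$, so they define a homomorphism $\Int(D) \to A$ sending $X$ to $a$; hence $A$ is WPC.

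The equivalence $(2) \Leftrightarrow (3)$ carries the real content. The direction $(3) \Rightarrow (2)$ is immediate, since $\Pi_q \subseteq \ppp$ for every $t$-maximal $\ppp$ with $N(\ppp) = q$, giving $\Pi_q A \subseteq \ppp A$. For $(2) \Rightarrow (3)$, fix a prime power $q$; by $(\mathcal{C}2)$ there are only finitely many $t$-maximal ideals $\ppp_1, \dots, \ppp_m$ of norm $q$, and $\Pi_q = \ppp_1 \cdots \ppp_m = \pi_q D$. The key observation is that each $\ppp_i$ is a maximal ideal in the usual sense, because $D/\ppp_i$ is a finite integral domain, hence a field. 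Consequently the $\ppp_i$ are pairwise comaximal, so the extended ideals $\ppp_i A$ are pairwise comaximal in $A$, and therefore $\bigcap_i \ppp_i A = \prod_i \ppp_i A = (\ppp_1 \cdots \ppp_m) A = \pi_q A = \Pi_q A$. Since $(2)$ forces $a^q - a \in \ppp_i A$ for each $i$ (as $N(\ppp_i) = q$), we conclude $a^q - a \in \Pi_q A$, which is $(3)$.

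Finally, for $D \in \mathcal{C}$ I would adjoin $(4)$. Since any APC algebra is visibly WPC, $(4) \Rightarrow (1)$ is clear. For $(1) \Rightarrow (4)$, Theorem \ref{regbasisalg} shows $\Int(D)$ has a regular basis, hence is free as a $D$-module; by the preceding proposition relating the comparison maps $\theta_{\XX}$ to the WPC and APC conditions, freeness forces $\theta_{\XX}$ to be an isomorphism for every set $\XX$ and yields that a $D$-algebra is WPC if and only if it is APC. I expect the main obstacle to be the direction $(2) \Rightarrow (3)$, and specifically the need to recognize that $t$-maximal ideals of finite norm are genuinely maximal, so that pairwise comaximality and the identity $\bigcap_i \ppp_i A = \Pi_q A$ become available; without this, the intersection condition $(2)$ cannot be upgraded to the principal-ideal congruence $(3)$, and the passage to an arbitrary (possibly non-flat) $D$-algebra $A$ would break down.
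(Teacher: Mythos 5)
Your proposal is exactly the argument the paper intends: the paper's entire proof of Theorem \ref{polyathm} is the single sentence ``The proof is a straightforward extension of the proof of Proposition \ref{dvd},'' and your write-up supplies precisely that extension --- $(1)\Leftrightarrow(3)$ by pushing $F_q$ forward under a homomorphism fixing $X \mapsto a$ and, conversely, by recursively choosing $b_{q,k+1}$ with $b_{q,k}^q - b_{q,k} = \pi_q b_{q,k+1}$ and invoking the presentation of Proposition \ref{polyaprop} (this is where membership in $\mathcal{D}$ is used, and it is legitimate since $\mathcal{D}$ is defined by the existence of that presentation), and $(1)\Leftrightarrow(4)$ for $D \in \mathcal{C}$ via the regular basis from Theorem \ref{regbasisalg}, hence freeness of $\Int(D)$ as a $D$-module, hence the unnumbered proposition on $\theta_\XX$ identifying WPC with APC. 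Your observation that $t$-maximal ideals of finite norm are genuinely maximal (finite domains are fields) is also the right enabling fact for the comaximality step.

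The one point to flag concerns $(2)\Rightarrow(3)$, which you correctly single out as carrying the content. Your argument invokes $(\mathcal{C}2)$ to write $\Pi_q = \ppp_1 \cdots \ppp_m$ as a finite product of pairwise comaximal maximal ideals and then applies the Chinese remainder theorem to the extended ideals, giving $\bigcap_i \ppp_i A = \Pi_q A$. This is fine for $D \in \mathcal{C}$, but the theorem is asserted for the class $\mathcal{D}$ as well, and the definition of $\mathcal{D}$ (each $\Pi_q$ principal together with the presentation of Proposition \ref{polyaprop}) does not include $(\mathcal{C}2)$: there could a priori be infinitely many $t$-maximal ideals of norm $q$, in which case $\Pi_q$ is only an intersection, the product decomposition is unavailable, and the containment $\Pi_q A \subseteq \bigcap_i \ppp_i A$ may be strict for a general (non-torsion-free, non-finitely-generated) $A$. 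As written, your proof establishes $(1)\Leftrightarrow(3)\Rightarrow(2)$ for all of $\mathcal{D}$, but the full three-way equivalence only when $(\mathcal{C}2)$ also holds. Since the paper's proof is a one-line stub, it does not address this point either, so you have matched --- indeed exceeded --- its level of detail; but to claim $(2)\Rightarrow(3)$ for every $D \in \mathcal{D}$ you would need either a separate argument replacing the CRT step or an observation that the domains in $\mathcal{D}$ relevant here satisfy the needed finiteness.
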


\begin{proof}
The proof is a straightforward extension of the proof of Proposition \ref{dvd}.
\end{proof}

\section{Local versus global behavior}\label{sec:7}

In this section we investigate the local/global behavior of WPC algebras, and we characterize the ``locally WPC'' algebras over any Krull domain.


\begin{lemma}\label{lreg}
Let $D$ be an integral domain with quotient field $K$ and $A$ a $D$-torsion-free $D$-algebra.
\begin{enumerate}
\item $A$ is a WPC $D$-algebra if and only if for every $f \in \Int(D)$ one has
$f(a) \in A \subseteq A \otimes_D K$ for every $a \in A$.
\item If $A$ is a WPC $D$-algebra and $D$ is polynomially L-regular, then $S^{-1}A$ is a WPC $S^{-1}D$-algebra for every multiplicative subset $S$ of $D$.
\item Suppose that $\mathcal{P}$ is a set of prime ideals of $D$ such that $A = \bigcap_{\ppp \in \mathcal{P}} A_\ppp$.   If $A_\ppp$ is a WPC $D_\ppp$-algebra for all $\ppp \in \mathcal{P}$, then $A$ is a WPC $D$-algebra.
\end{enumerate}
\end{lemma}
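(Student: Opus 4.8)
The plan is to prove the three parts of Lemma \ref{lreg} in order, each being a relatively direct consequence of the definitions and the facts collected earlier in the excerpt.

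For part (1), recall that by definition $A$ is WPC if for every $a \in A$ there is a $D$-algebra homomorphism $\Int(D) \longrightarrow A$ sending $X$ to $a$. The key observation is that, since $A$ is $D$-torsion-free, the natural map $A \longrightarrow A \otimes_D K$ is injective, and $A \otimes_D K$ is a $K$-algebra. Given $a \in A$, there is a unique $K$-algebra homomorphism $K[X] \longrightarrow A \otimes_D K$ sending $X$ to $a$ (viewing $a$ as its image in $A \otimes_D K$), under which $f \longmapsto f(a)$ for every $f \in K[X]$. A $D$-algebra homomorphism $\Int(D) \longrightarrow A$ sending $X$ to $a$ exists precisely when this evaluation map restricts to a map $\Int(D) \longrightarrow A$, i.e.\ precisely when $f(a) \in A$ for every $f \in \Int(D)$. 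So I would first set up the evaluation homomorphism on $A \otimes_D K$, then argue that the sought-after homomorphism out of $\Int(D)$ is forced to be $f \longmapsto f(a)$ (by commutativity of the relevant triangle and torsion-freeness), which makes the equivalence in (1) immediate.

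For part (2), assume $A$ is WPC and $D$ is polynomially L-regular; I want to show $S^{-1}A$ is a WPC $S^{-1}D$-algebra. The point is that $S^{-1}A$ is again $S^{-1}D$-torsion-free (localization is exact), so by part (1) applied over $S^{-1}D$ it suffices to check that $g(b) \in S^{-1}A$ for every $g \in \Int(S^{-1}D)$ and every $b \in S^{-1}A$. Polynomial L-regularity gives $\Int(S^{-1}D) = S^{-1}\Int(D)$, so I may write $g = f/u$ with $f \in \Int(D)$ and $u \in S$, and $b = a/s$ with $a \in A$, $s \in S$. Then $g(b)$ is computed inside $S^{-1}(A \otimes_D K) = (S^{-1}A) \otimes_{S^{-1}D} K$; since $f \in \Int(D)$ and $A$ is WPC, part (1) gives that the relevant values lie in $A$ after clearing the denominator $u$ and the substitution $b = a/s$, so $g(b) \in S^{-1}A$. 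The routine bookkeeping is just tracking denominators, which I would not grind through.

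For part (3), suppose $A = \bigcap_{\ppp \in \mathcal{P}} A_\ppp$ (intersection taken inside $A \otimes_D K$) and each $A_\ppp$ is a WPC $D_\ppp$-algebra. By part (1) it is enough to show $f(a) \in A$ for every $f \in \Int(D)$ and $a \in A$. Fix such $f$ and $a$. For each $\ppp \in \mathcal{P}$, the image of $a$ lies in $A_\ppp$, and since $\Int(D) \subseteq \Int(D_\ppp)$ (localization sends integer-valued polynomials into integer-valued polynomials, by \cite[Proposition I.2.2]{cah}), the WPC hypothesis on $A_\ppp$ together with part (1) over $D_\ppp$ gives $f(a) \in A_\ppp$. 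As this holds for every $\ppp \in \mathcal{P}$, we get $f(a) \in \bigcap_{\ppp \in \mathcal{P}} A_\ppp = A$, as desired. The main obstacle across the three parts is conceptual rather than computational: I must be careful that all evaluations $f(a)$ are interpreted consistently inside the single ambient $K$-algebra $A \otimes_D K$ (and its localizations), so that the intersection in (3) and the denominator-clearing in (2) make literal sense; once the ambient algebra is fixed, each implication follows from part (1) and the stated localization facts.
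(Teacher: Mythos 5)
Your proof is correct and takes essentially the same approach as the paper, whose entire proof of this lemma is the single line ``This is clear''; your write-up simply supplies the routine definitional details that the paper omits. The one step in part (2) worth making explicit is that for $b = a/s$ one must first observe $g((1/s)X) \in \Int(S^{-1}D) = S^{-1}\Int(D)$ (immediate since $1/s \in S^{-1}D$, together with a second use of L-regularity) before part (1) over $D$ can be applied to the numerator --- this is exactly the ``substitution $b = a/s$'' you gesture at, and it does go through.
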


\begin{proof}
This is clear.
\end{proof}

\begin{lemma}\label{extext}
Let $D$ be an integral domain, $D'$ an extension $D$, and $A$ a $D'$-algebra.
\begin{enumerate}
\item If $A$ is a WPC $D$-algebra, $D'$ is flat over $D$, and $D$ is polynomially L-regular, then $A$ is a WPC $D'$-algebra. 
\item If $A$ is a WPC $D'$-algebra and $D'$ is a WPC $D$-algebra, then $A$ is a WPC $D$-algebra.
\end{enumerate}
\end{lemma}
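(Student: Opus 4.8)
The plan is to treat the two parts separately, since (2) is formal while (1) carries the real content.

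For part (2) I would use the characterization recalled just before Lemma \ref{lreg}: a domain extension is WPC exactly when it induces an inclusion of the integer-valued polynomial rings. Since $D'$ is a WPC $D$-algebra and a domain extension of $D$, this gives $\Int(D)\subseteq\Int(D')$ inside $K'[X]$, where $K'$ is the quotient field of $D'$. Now fix $a\in A$. Because $A$ is a WPC $D'$-algebra there is a $D'$-algebra homomorphism $\psi\colon\Int(D')\to A$ with $\psi(X)=a$. Restricting $\psi$ along $\Int(D)\subseteq\Int(D')$ gives a ring homomorphism $\Int(D)\to A$ sending $X$ to $a$, and it is $D$-linear because $\psi$ is $D'$-linear and $D\subseteq D'$. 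As $a$ was arbitrary, $A$ is a WPC $D$-algebra, finishing (2).

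For part (1) the natural first move is base change. For each $a\in A$ there is, since $A$ is WPC over $D$, a $D$-algebra homomorphism $\phi\colon\Int(D)\to A$ with $\phi(X)=a$; I would form the induced $D'$-algebra homomorphism $\Phi\colon D'\otimes_D\Int(D)\to A$, $d'\otimes f\mapsto d'\phi(f)$, using the $D'$-algebra structure of $A$, which again sends $X$ to $a$. Flatness of $D'$ over $D$ identifies $D'\otimes_D\Int(D)$ with the $D'$-submodule $D'\cdot\Int(D)$ of $K'[X]$. The goal is then to promote $\Phi$ to a $D'$-algebra homomorphism $\Int(D')\to A$ with $X\mapsto a$.

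The hard part will be that $\Int$ does not commute with flat base change: in general $\Int(D')\neq D'\otimes_D\Int(D)$ (for instance $\binom{X}{2}\in\ZZ[i]\otimes_\ZZ\Int(\ZZ)$ but $\binom{X}{2}\notin\Int(\ZZ[i])$, since $\binom{i}{2}=\tfrac{-1-i}{2}\notin\ZZ[i]$), so one cannot factor $\Phi$ through an inclusion or quotient directly involving $\Int(D')$. To get around this I would argue locally over $D$, which is exactly where polynomial L-regularity enters. For each $\ppp\in t\Max(D)$ with $S_\ppp=D\setminus\ppp$, L-regularity gives $\Int(D_\ppp)=S_\ppp^{-1}\Int(D)$; Lemma \ref{lreg}(2) gives that $S_\ppp^{-1}A$ is WPC over $D_\ppp$; the localization $(D')_\ppp=S_\ppp^{-1}D'$ is again flat over the (still L-regular) local domain $D_\ppp$; and $S_\ppp^{-1}\Int(D')\subseteq\Int((D')_\ppp)$ holds for the general reason recorded before Lemma \ref{lreg}. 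This reduces matters to the local bases $D_\ppp$, after which I would reassemble using the flat intersection formula $D'=\bigcap_{\PPP\in t\Max(D')}(D')_\PPP$ together with Lemma \ref{lreg}(3) to conclude that $A$ is WPC over $D'$. The principal obstacle — and the step that genuinely uses both flatness and L-regularity — is this local-to-global promotion of $\Phi$; because $A$ need not be $D'$-torsion-free, it must be carried out at the level of honest $D'$-algebra homomorphisms rather than by evaluation $f\mapsto f(a)$, which is what makes the reassembly delicate.
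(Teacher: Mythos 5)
Your part (2) is correct and is exactly the paper's argument: since $D'$ is a WPC domain extension of $D$ one has $\Int(D) \subseteq \Int(D')$, and composing this inclusion with the given $D'$-algebra homomorphism $\Int(D') \to A$ sending $X$ to $a$ yields the required $D$-algebra homomorphism. No issues there.

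Part (1), however, has a genuine gap, and you in fact talk yourself out of the correct finish. Your opening move is the paper's as well: base-change to $\Phi\colon D'\otimes_D \Int(D) \cong D'\Int(D) \to A$, the identification with the subring $D'\Int(D)$ of $K'[X]$ coming from flatness. The paper then concludes in a single step by invoking \cite[Proposition 2.3]{ell2}: polynomial L-regularity of $D$ together with flatness of $D'$ gives $D'\Int(D) = \{f \in K'[X] : f(D) \subseteq D'\}$, and since $D \subseteq D'$ one trivially has $\Int(D') \subseteq \{f \in K'[X] : f(D) \subseteq D'\} = D'\Int(D)$; restricting $\Phi$ along this inclusion is the desired $D'$-algebra homomorphism $\Int(D') \to A$ with $X \mapsto a$. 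Your ${i \choose 2}$ computation shows only the failure of the \emph{reverse} inclusion $D'\Int(D) \subseteq \Int(D')$, i.e., that $\Phi$ does not factor through a quotient onto $\Int(D')$ --- but nothing of the sort is needed: one restricts rather than factors. (Your example is not worthless: it shows the equality $\Int(D') = D'\Int(D)$ displayed in the paper's proof should be read as $\Int_{D'}(D) = D'\Int(D)$ with $\Int_{D'}(D) = \{f : f(D) \subseteq D'\}$; but only the inclusion $\Int(D') \subseteq D'\Int(D)$ is ever used.) Having missed this, your fallback local-to-global scheme does not close the gap: Lemma \ref{lreg}, both the localization step (2) and the reassembly step (3), is stated only for $D$-torsion-free $A$, whereas here $A$ is an arbitrary $D'$-algebra, and torsion genuinely occurs since WPC algebras are closed under quotients; moreover you would need to reassemble over $t\Max(D')$ from data produced over primes of $D$, a step you label ``delicate'' and never carry out. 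As written, part (1) is therefore unproved; the missing idea is precisely the inclusion $\Int(D') \subseteq \Int_{D'}(D) = D'\Int(D)$ supplied by flatness plus polynomial L-regularity.
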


\begin{proof}
To prove (1) let $a \in A$.  By hypothesis there is a $D$-algebra homomorphism $\Int(D) \longrightarrow A$ sending $X$ to $a$.  By \cite[Proposition 2.3]{ell2}, then, there is a $D'$-algebra homomorphism $$\Int(D') = D' \Int(D) \cong D' \otimes_D \Int(D) \longrightarrow D' \otimes_D A \longrightarrow A$$
sending $X \in \Int(D')$ to $a$.  Therefore $A$ is a WPC $D'$-algebra.

To prove (2), again let $a \in A$.  By hypothesis there is a $D$-algebra homomorphism
$$\Int(D) \subseteq \Int(D') \longrightarrow A$$
sending $X \in \Int(D)$ to $a$.  Therefore $A$ is a WPC $D$-algebra.
\end{proof}

Because any localization of a domain $D$ at a multiplicative subset is a flat WPC $D$-algebra, we have the following corollary. 

\begin{corollary}\label{extextcor}
Let $D$ be a polynomially L-regular integral domain and $A$ a $D$-algebra.  Let $S$ be a multiplicative subset of $D$.   Then $S^{-1}A$ is a WPC $D$-algebra if and only if $S^{-1}A$ is a WPC $S^{-1}D$-algebra.   
\end{corollary}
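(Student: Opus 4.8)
The plan is to deduce the biconditional directly from Lemma \ref{extext}, taking $D' = S^{-1}D$ and exploiting the observation recorded just before the statement that any localization $S^{-1}D$ is a flat WPC extension of $D$. Throughout, $S^{-1}A$ is to be regarded as a $D$-algebra via the composite $D \to S^{-1}D \to S^{-1}A$ and as an $S^{-1}D$-algebra in the evident way, so that both notions of WPC appearing in the statement are meaningful for the same ring $S^{-1}A$.

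For the forward implication, suppose $S^{-1}A$ is a WPC $D$-algebra. I would apply Lemma \ref{extext}(1) with the role of $A$ played by $S^{-1}A$ and the role of $D'$ played by $S^{-1}D$. The three hypotheses of that part are satisfied: $S^{-1}A$ is WPC over $D$ by assumption; $S^{-1}D$ is flat over $D$, since localization is flat; and $D$ is polynomially L-regular by hypothesis. The conclusion of Lemma \ref{extext}(1) is precisely that $S^{-1}A$ is a WPC $S^{-1}D$-algebra.

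For the reverse implication, suppose $S^{-1}A$ is a WPC $S^{-1}D$-algebra. Here I would invoke Lemma \ref{extext}(2), again with $S^{-1}A$ in the role of $A$ and $S^{-1}D$ in the role of $D'$. Its two hypotheses hold: $S^{-1}A$ is WPC over $S^{-1}D$ by assumption, and $S^{-1}D$ is a WPC $D$-algebra because every localization of $D$ is WPC over $D$. The conclusion of Lemma \ref{extext}(2) is that $S^{-1}A$ is a WPC $D$-algebra, as required.

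Since each direction is a one-line specialization of an already established lemma, I do not expect any genuine obstacle beyond correctly matching the hypotheses to the two parts of Lemma \ref{extext}. The only subtlety worth flagging is the asymmetry between the two directions: the forward implication genuinely invokes polynomial L-regularity of $D$ (through Lemma \ref{extext}(1)), whereas the reverse implication does not, relying only on the fact that localizations are WPC extensions. No computation is needed.
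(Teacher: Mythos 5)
Your proof is correct and is exactly the paper's argument: the paper derives the corollary from Lemma \ref{extext} with $D' = S^{-1}D$, using flatness of localization and polynomial L-regularity for the forward direction and the fact that any localization is a WPC extension for the converse. Your remark on the asymmetry of hypotheses between the two directions is also accurate.
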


Let us say that a $D$-algebra $A$ is {\it locally WPC} if $A_\ppp$ is a WPC $D_\ppp$-algebra for every prime ideal $\ppp$ of $D$.  If $D$ is polynomially L-regular, then by Corollary \ref{extextcor} this holds if and only if $A_\ppp$ is a WPC $D$-algebra for every prime ideal $\ppp$ of $D$.

\begin{lemma}\label{localprop}
The following conditions are equivalent for any integral domain $D$ and any $D$-algebra $A$.
\begin{enumerate}
\item $A$ is locally WPC.
\item $A_\ppp$ is a WPC $D_\ppp$-algebra for every maximal ideal $\ppp$ of $D$.
\item $A_\ppp$ is a WPC $D_\ppp$-algebra for every $t$-maximal ideal $\ppp$ of $D$ of finite norm.
\end{enumerate}
\end{lemma}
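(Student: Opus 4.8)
The plan is to show that all three conditions collapse to the single statement that $A_\ppp$ is a WPC $D_\ppp$-algebra for every $t$-maximal ideal $\ppp$ of $D$ of finite norm, which is precisely condition (3). The implications $(1) \Rightarrow (2)$ and $(1) \Rightarrow (3)$ are immediate, since maximal ideals and $t$-maximal ideals of finite norm are in particular prime ideals; so the work lies in $(2) \Rightarrow (1)$ and $(3) \Rightarrow (1)$.

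First I would isolate two observations. The first is that a $t$-maximal ideal $\ppp$ of $D$ of finite norm is automatically a maximal ideal: since $N(\ppp) = |D/\ppp|$ is finite and $\ppp$ is prime, $D/\ppp$ is a finite integral domain, hence a field. Thus every prime ideal relevant to condition (3) is among those relevant to condition (2). The second, and decisive, observation is that if $\qqq$ is a prime ideal of $D$ that is \emph{not} $t$-maximal of finite norm, then Theorem \ref{factprop}(4) gives $\Int(D_\qqq) = D_\qqq[X]$; consequently, for any $a \in A_\qqq$ the evaluation homomorphism $D_\qqq[X] \longrightarrow A_\qqq$ with $X \longmapsto a$ is a $D_\qqq$-algebra homomorphism out of $\Int(D_\qqq)$, so $A_\qqq$ is automatically WPC over $D_\qqq$ for every such $\qqq$.

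With these in hand the two remaining implications are routine. For $(3) \Rightarrow (1)$, take an arbitrary prime ideal $\qqq$: if $\qqq$ is $t$-maximal of finite norm it is WPC by hypothesis, and otherwise it is WPC by the second observation, so $A_\qqq$ is WPC over $D_\qqq$ for every prime $\qqq$, which is (1). For $(2) \Rightarrow (1)$ the argument is the same, except that when $\qqq$ is $t$-maximal of finite norm I would invoke the first observation to conclude that $\qqq$ is maximal, and then apply hypothesis (2).

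I do not expect a real obstacle: the entire content rests on the second observation, that at every prime not $t$-maximal of finite norm the localized ring of integer-valued polynomials degenerates to an ordinary polynomial ring, rendering the WPC condition vacuous there. The only subtlety to handle with care is the passage from ``$t$-maximal of finite norm'' to ``maximal'', which is exactly the remark that a finite integral domain is a field.
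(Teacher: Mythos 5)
Your proof is correct and follows essentially the same route as the paper: the paper's entire argument is your ``second observation,'' namely that at any prime $\ppp$ not $t$-maximal of finite norm one has $\Int(D_\ppp) = D_\ppp[X]$ (the paper cites \cite[Lemma 2.2]{ell2}, which is also what underlies Theorem \ref{factprop}(4)), so that $A_\ppp$ is automatically WPC and hence (3) implies (1). The only difference is that the paper leaves the remaining trivial implications implicit, whereas you spell out the (2) $\Rightarrow$ (1) bridge via the (correct) remark that a $t$-maximal ideal of finite norm is maximal because a finite integral domain is a field.
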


\begin{proof}
 If $\ppp$ is a prime ideal of $D$ that is not $t$-maximal of finite norm, then by \cite[Lemma 2.2]{ell2} one has $\Int(D_\ppp) = D_\ppp[X]$, and therefore $A_\ppp$ is a WPC $D_\ppp$-algebra.  Therefore (3) implies (1) and so the three conditions are equivalent.
\end{proof}

Any domain $D$ in the class $\mathcal{C}$ satisfies the hypothesis of the following theorem.

\begin{theorem}\label{polybthm}
Let $D$ be an integral domain and $A$ a $D$-algebra.  Suppose that $\ppp D_\ppp$ is principal and has finite height for every $t$-maximal ideal $\ppp$ of $D$ of finite norm.  Then $A$ is locally WPC if and only if, for every $t$-maximal ideal $\ppp$ of $D$ of finite norm $q = N(\ppp)$, any of the following equivalent conditions holds.
\begin{enumerate}
\item $a^{N(\ppp)} \equiv a \ (\mod \, \ppp A)$ for all $a \in A$.
\item The endomorphism $a \longmapsto a^q$ of $A/\ppp A$ is the identity.
\item $A/\ppp A$ is locally isomorphic to $D/\ppp$ as a $D$-algebra.
\item $A/\ppp A$ is reduced, and every residue field of $A/\ppp A$ is isomorphic to $D/\ppp$ as a $D$-algebra.
\item $A/\ppp A$ is isomorphic to a subring of $\FF_q^Y$ for some set $Y$.
\item For every maximal ideal $\MM$ of $A$ lying over $\ppp$, one has $\ppp A_\MM = \MM A_\MM$ and $A/\MM \cong D/\ppp$ as $D$-algebras.
\item For every prime ideal $\PPP$ of $A$ lying over $\ppp$, one has $\ppp A_\PPP = \PPP A_\PPP$ and $A/\PPP \cong D/\ppp$ as $D$-algebras.
\end{enumerate}
Moreover, if $A/\ppp A$ is semi-local or Noetherian, then each of the above conditions is equivalent to the following.
\begin{enumerate}
\item[(8)] $\ppp A = \MM_1 \MM_2 \cdots \MM_r$ for distinct maximal ideals
$\MM_i$ of $A$ such that $A/\MM_i \cong D/\ppp$.
\end{enumerate}
\end{theorem}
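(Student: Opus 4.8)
The plan is to reduce everything, via Lemma~\ref{localprop} and Proposition~\ref{dvd}, to a statement about the $D/\ppp$-algebra $B = A/\ppp A$, and then to recognize conditions (1)--(7) as standard reformulations of the assertion that $B$ is an $\FF_q$-algebra on which the $q$-power map is the identity.

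First I would fix a $t$-maximal ideal $\ppp$ of finite norm $q = N(\ppp)$. By hypothesis $\ppp D_\ppp$ is principal and of finite height, so $D_\ppp$ is a finite-dimensional local domain with principal maximal ideal and residue field $D/\ppp$ of order $q$; thus Proposition~\ref{dvd} applies and says that $A_\ppp$ is a WPC $D_\ppp$-algebra if and only if $a^q \equiv a \pmod{\ppp A_\ppp}$ for all $a \in A_\ppp$. The key observation is that this congruence is intrinsic to $B = A/\ppp A$: localizing at $S = D \setminus \ppp$ only inverts elements whose images in $D/\ppp$ are nonzero, hence units in the $\FF_q$-algebra $B$, so $A_\ppp/\ppp A_\ppp = S^{-1}B \cong B$. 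Therefore $A_\ppp$ is WPC over $D_\ppp$ exactly when $x^q = x$ for all $x \in B$, which is condition (1) (equivalently (2)). Combining this with Lemma~\ref{localprop}---which lets me disregard all primes that are not $t$-maximal of finite norm---gives the asserted equivalence between $A$ being locally WPC and condition (1) holding at every such $\ppp$.

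It then remains to prove, for a fixed such $\ppp$, that (1)--(7) are equivalent; here $B = A/\ppp A$ is an $\FF_q$-algebra and (1)$\Leftrightarrow$(2) is immediate. I would first show that (2) implies all the rest: if $x^q = x$ on $B$, then $B$ is reduced (a nilpotent $x$ makes $x^{q-1}-1$ a unit, forcing $x=0$), and for every prime $\PPP$ the domain $B/\PPP$ again satisfies $x^q=x$, so every element is a root of $X^q - X = \prod_{a \in \FF_q}(X-a)$ and hence $B/\PPP = \FF_q$; thus every prime of $B$ is maximal with residue field $\FF_q$ (giving (4)), $B$ is reduced of dimension zero and so von Neumann regular with $B_\PPP \cong \FF_q$ for all primes $\PPP$ (giving (3), (6), (7) once $\ppp A_\MM = \MM A_\MM$ is translated into ``$B_{\bar\MM}$ is a field equal to $A/\MM$''), and the injection $B \hookrightarrow \prod_\PPP B/\PPP \cong \FF_q^Y$ gives (5). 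For the converses I would observe that (5)$\Rightarrow$(2) is immediate since $\FF_q^Y$ satisfies $x^q=x$; that (3) and (4) each yield the same embedding into a product of copies of $\FF_q$ and hence (5); and that (6)$\Rightarrow$(3) because $B_\MM$ being a field at every maximal ideal is exactly von Neumann regularity, after which every prime is maximal and (3) $=$ (7) follow. This closes the cycle.

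Finally, under the extra hypothesis that $B$ is semi-local or Noetherian, I would adjoin (8). Under any of (1)--(7) the ring $B$ is von Neumann regular with all residue fields $\FF_q$, and a von Neumann regular ring that is semi-local or Noetherian has only finitely many (maximal) primes and is therefore a finite product $\prod_{i=1}^r \FF_q$; pulling this back through $A \twoheadrightarrow B$ writes $\ppp A = \MM_1 \cap \cdots \cap \MM_r = \MM_1 \cdots \MM_r$ with the $\MM_i$ distinct maximal ideals satisfying $A/\MM_i \cong D/\ppp$, which is (8). Conversely (8) gives $B \cong \prod_{i=1}^r A/\MM_i \cong \prod \FF_q$ by the Chinese Remainder Theorem, so (2) holds. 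The main obstacle here is bookkeeping rather than depth: one must keep the per-prime conditions separate from the ``for all $\ppp$'' quantifier, correctly translate the localization conditions (6) and (7) on $A$ into statements about $B$, and invoke the structure of zero-dimensional reduced rings. The single genuinely load-bearing step is the localization identity $A_\ppp/\ppp A_\ppp \cong A/\ppp A$, which converts the local WPC criterion of Proposition~\ref{dvd} into the global congruence (1).
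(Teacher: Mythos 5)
Your proof is correct, and for the core assertion---that $A$ is locally WPC if and only if condition (1) holds at every $t$-maximal $\ppp$ of finite norm---it follows the same skeleton as the paper: reduce to $t$-maximal primes of finite norm via Lemma \ref{localprop}, note that the hypothesis makes $D_\ppp$ a finite-dimensional local domain with principal maximal ideal and residue field of order $q$, and invoke Proposition \ref{dvd}. Your packaging of the transfer between $\ppp A_\ppp$ and $\ppp A$ as the single identity $A_\ppp/\ppp A_\ppp \cong A/\ppp A$ (because every element of $D \setminus \ppp$ maps to a unit of the $\FF_q$-algebra $A/\ppp A$) is a cleaner formulation of exactly what the paper does by hand: its choice of $u_\ppp, v_\ppp \in D \setminus \ppp$ with $v_\ppp u_\ppp \equiv 1 \ (\operatorname{mod} \, \ppp)$ is precisely the inversion of the image of $u_\ppp$ in $A/\ppp A$. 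The genuine difference is in the equivalences (1)--(8): the paper does not prove them at all, deferring to \cite[Propositions 4.1 and 4.2]{jess}, whereas you derive them directly from the structure theory of zero-dimensional reduced rings (the $q$-power identity forces $B = A/\ppp A$ to be von Neumann regular with all residue fields $\FF_q$, every prime maximal, $B_\MM \cong \FF_q$, and $B \hookrightarrow \FF_q^Y$; conversely (5) trivially yields $x^q = x$; and semi-locality or Noetherianity collapses $B$ to a finite product $\FF_q^r$, giving (8) via the Chinese Remainder Theorem). This buys a self-contained proof at the cost of a few pages; the paper buys brevity at the cost of an external dependency. One point of care in your argument for (4) $\Rightarrow$ (5): ``every residue field of $A/\ppp A$'' must be read scheme-theoretically, i.e., as $\kappa(\PPP) = (A/\ppp A)_\PPP / \PPP (A/\ppp A)_\PPP$ for \emph{all} primes $\PPP$, not just maximal ideals---otherwise $B = \FF_q[T]_{(T)}$ (realizable as $A/\ppp A$ by viewing it as a $D$-algebra through $D/\ppp$) is reduced with its unique maximal-ideal residue field equal to $\FF_q$ yet fails $T^q = T$. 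Under the all-primes reading, your embedding $B \hookrightarrow \prod_\PPP \kappa(\PPP) \cong \FF_q^Y$ over the minimal primes is valid and the cycle closes.
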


\begin{proof}
By \cite[Propositions 4.1 and 4.2]{jess}, we need only show that $A$ is locally WPC if and only if condition (1) holds for all $t$-maximal ideals $\ppp$ of $D$ of finite norm.

Suppose first that $A$ is locally WPC, so $A_\ppp$ is a WPC $D_\ppp$-algebra for every prime ideal $\ppp$ of $D$.  Let $\ppp$ be a $t$-maximal ideal of $D$ of finite norm $q = N(\ppp)$, and let $a \in A$.  Since $q = |D_\ppp/\ppp D_\ppp|$, by Proposition \ref{dvd} one has $a^q \equiv a \ (\mod \, \ppp A_\ppp)$.  Therefore $a^q - a \in \ppp A_\ppp$, so there exists $u_\ppp \in D\backslash \ppp$ so that $u_\ppp (a^q -a) \in \ppp A$.  Let $v_\ppp \in D \backslash \ppp$ with $v_\ppp u_\ppp \equiv 1 \ (\mod \, \ppp)$.  Then $$a^q -a \equiv v_\ppp u_\ppp(a^q  - a) \equiv 0 \ (\mod \, \ppp A),$$
whence $a^q \equiv a \ (\mod \, \ppp A)$. 

Suppose, conversely, that $a^{N(\ppp)} \equiv a \ (\mod \, \ppp A)$ for all $a \in A$ and for every $t$-maximal ideal $\ppp$ of $D$ of finite norm.  Then for all $a \in A$ and  $u \in D \backslash \ppp$  one has$$a^{N(\ppp)}u \equiv au^{N(\ppp)} \ (\mod \, \ppp A_\ppp)$$ and therefore $$(a/u)^{N(\ppp)} \equiv a^{N(\ppp)}/u^{N(\ppp)} \equiv a/u \ (\mod \, \ppp A_\ppp).$$  Therefore $x^q \equiv x \ (\mod \, \ppp A_\ppp)$ for all $x \in A_\ppp$, where $q = |D_\ppp/\ppp D_\ppp|$, so $A_\ppp$ is a WPC $D_\ppp$-algebra by Proposition \ref{dvd}. Thus $A$ is locally WPC by Lemma \ref{localprop}.
\end{proof}


\begin{corollary}\label{polybcor}
Let $D$ be an integral domain and $A$ a $D$-algebra.  If $D$ is in the class $\mathcal{C}$, or if $A$ is $D$-torsion-free, then $A$ is WPC if and only if $A$ is locally WPC.
\end{corollary}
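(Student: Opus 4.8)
The plan is to treat the two hypotheses entirely separately, since they call for different mechanisms. For $D \in \mathcal{C}$ I would reduce both "WPC" and "locally WPC" to one and the same elementwise congruence. Because $\mathcal{C} \subseteq \mathcal{D}$ (Proposition \ref{polyaprop}), Theorem \ref{polyathm} applies and shows that $A$ is WPC if and only if $a^{N(\ppp)} \equiv a \ (\mod \ppp A)$ for all $a \in A$ and every $t$-maximal ideal $\ppp$ of $D$ of finite norm. On the other hand, every domain in $\mathcal{C}$ satisfies the hypothesis of Theorem \ref{polybthm} (as noted just before that theorem: its finite-norm $t$-maximal primes are principal of finite height, by $(\mathcal{C}2)$, $(\mathcal{C}3)$, and $(\mathcal{C}4)$), and condition (1) of Theorem \ref{polybthm} is exactly this same congruence. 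Hence Theorem \ref{polybthm} says $A$ is locally WPC if and only if that identical congruence holds. The two characterizations coincide verbatim, so WPC $\Longleftrightarrow$ locally WPC with no torsion-freeness assumption on $A$. I expect this case to be routine.

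For the case where $A$ is $D$-torsion-free (and $D$ is arbitrary) I would argue the two implications differently. For "locally WPC $\Longrightarrow$ WPC" I would exploit torsion-freeness to embed $A \hookrightarrow A \otimes_D K$ and recover the intersection formula $A = \bigcap_{\mm} A_\mm$, the intersection taken over all maximal ideals $\mm$ of $D$ inside $A \otimes_D K$; this is a clearing-of-denominators argument, since if $x$ lies in every $A_\mm$ then the $D$-ideal of elements $s$ with $sx \in A$ is contained in no maximal ideal and so equals $D$. Assuming $A_\mm$ is WPC over $D_\mm$ for every maximal ideal $\mm$, Lemma \ref{lreg}(3), applied with $\mathcal{P}$ the set of maximal ideals, then yields that $A$ is WPC.

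For the reverse implication "WPC $\Longrightarrow$ locally WPC" I would first invoke Lemma \ref{localprop} to reduce to verifying that $A_\ppp$ is WPC over $D_\ppp$ only at the $t$-maximal ideals $\ppp$ of finite norm, since at every other prime $\Int(D_\ppp) = D_\ppp[X]$ and local WPC is automatic. Fixing such a $\ppp$, I would use Lemma \ref{lreg}(1) (legitimate because $A_\ppp$ is again torsion-free over $D_\ppp$) to restate the goal as $g(a) \in A_\ppp$ for every $g \in \Int(D_\ppp)$ and every $a \in A$, and then attempt to descend $g$ to an ordinary integer-valued polynomial $f \in \Int(D)$, à la Lemma \ref{lreg}(2), so as to apply the global hypothesis $f(a) \in A$.

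I expect this last descent to be the main obstacle. Passing from a given $g \in \Int(D_\ppp)$ back to some $f \in \Int(D)$ requires $S^{-1}\Int(D) = \Int(D_\ppp)$ with $S = D \setminus \ppp$, which is precisely the polynomial L-regularity of $D$ at $\ppp$ invoked in Lemma \ref{lreg}(2) --- and this is exactly the property that can fail at a finite-norm $t$-maximal prime, the phenomenon behind \cite[Example VI.4.15]{cah}. So the crux of the torsion-free case is controlling the gap between $\Int(D)_\ppp$ and $\Int(D_\ppp)$: one must show that torsion-freeness of $A$ together with the global identities $f(a) \in A$ already forces the local identities $g(a) \in A_\ppp$, or else restrict the verification, via Lemma \ref{localprop}, to primes at which L-regularity is guaranteed and dispatch the rest directly. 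Everything else is bookkeeping resting on Lemmas \ref{lreg} and \ref{localprop} and Proposition \ref{dvd}.
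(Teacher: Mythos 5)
Your first case and the backward half of your second case are correct and coincide with the paper's intended route. For $D \in \mathcal{C}$, Theorems \ref{polyathm} and \ref{polybthm} do reduce WPC and locally WPC to one and the same congruence $a^{N(\ppp)} \equiv a \ (\mod \ppp A)$ at the $t$-maximal primes of finite norm; one small correction: the hypothesis of Theorem \ref{polybthm} is that $\ppp D_\ppp$ is principal of finite height, not that $\ppp$ itself is principal --- this is what $(\mathcal{C}2)$, $(\mathcal{C}3)$, and $(\mathcal{C}4)$ give, since $\Pi_q D_\ppp = \ppp D_\ppp$ when $N(\ppp) = q$. Likewise your torsion-free argument for locally WPC $\Rightarrow$ WPC --- embed $A$ in $A \otimes_D K$, verify $A = \bigcap_{\mm} A_\mm$ by the denominator-ideal argument, and apply Lemma \ref{lreg}(3) --- is sound.

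The genuine gap is that you never prove the remaining implication, WPC $\Rightarrow$ locally WPC for $D$-torsion-free $A$, and neither of your two fallback strategies can close it. Strategy (ii) gains nothing: Lemma \ref{localprop} reduces the verification precisely to the $t$-maximal primes of finite norm, and those are exactly the primes at which $\Int(D)_\ppp = \Int(D_\ppp)$ can fail, so there is no residual set of ``good'' primes left to dispatch directly. Strategy (i) is false as stated: torsion-freeness together with the global identities $f(a) \in A$ does not force the local identities. Concretely, let $D$ have a maximal ideal $\ppp$ (necessarily with finite residue field) at which $\Int(D)_\ppp \subsetneq \Int(D_\ppp)$, as in \cite[Example VI.4.15]{cah}, and take $A = \Int(D)$. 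Then $A$ is $D$-torsion-free and WPC by Lemma \ref{lreg}(1), because $\Int(D)$ is closed under composition ($f(a) = f \circ a$ inside $A \otimes_D K = K[X]$); yet for $x = X \in A_\ppp$ and any $g \in \Int(D_\ppp) \setminus \Int(D)_\ppp$ one has $g(x) = g \notin A_\ppp$, so $A_\ppp$ is not a WPC $D_\ppp$-algebra and $A$ is not locally WPC. So the obstacle you flagged is not merely the main difficulty --- for arbitrary $D$ it is fatal, and the forward implication genuinely requires polynomial L-regularity of $D$, i.e., Lemma \ref{lreg}(2). That hypothesis is available in every setting where the corollary is applied (condition $(\mathcal{C}1)$ for $D \in \mathcal{C}$; Dedekind and Krull domains are polynomially L-regular since they are TV domains), and the torsion-free half of the statement should be read, and proved, with it in force; your plan as written cannot be completed without it.
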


\begin{corollary}\label{polybcor1}
Let $D$ be a Krull domain and $A$ a $D$-algebra.   Then $A$ is locally WPC if and only if $a^{N(\ppp)} \equiv a \ (\mod \, \ppp A)$ for every height one prime ideal $\ppp$ of $D$ of finite norm.
\end{corollary}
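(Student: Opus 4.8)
The plan is to deduce Corollary \ref{polybcor1} as a direct specialization of Theorem \ref{polybthm} to the Krull case. First I would recall the facts about Krull domains summarized in Section \ref{sec:3}: for a Krull domain $D$, an ideal is $t$-maximal if and only if it is a prime ideal of height one, and moreover $D_\ppp$ is a discrete valuation ring for every height one prime $\ppp$. In particular $\ppp D_\ppp$ is principal and has height one (hence finite height) for every such $\ppp$. This is exactly the hypothesis required to invoke Theorem \ref{polybthm}.

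Next I would observe that, under this identification of $t$-maximal ideals with height one primes, the two statements match verbatim. Theorem \ref{polybthm} asserts that $A$ is locally WPC if and only if, for every $t$-maximal ideal $\ppp$ of finite norm $q = N(\ppp)$, condition (1) holds, namely $a^{N(\ppp)} \equiv a \ (\mod \, \ppp A)$ for all $a \in A$. Since for a Krull domain the $t$-maximal ideals of finite norm are precisely the height one prime ideals of finite norm, condition (1) of Theorem \ref{polybthm} quantified over all such $\ppp$ is literally the condition in the statement of the corollary. Thus the corollary follows immediately.

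Concretely, the proof I would write is a single short paragraph: since $D$ is Krull, every $t$-maximal ideal of $D$ is a height one prime, so $\ppp D_\ppp$ is principal of height one for every $t$-maximal $\ppp$; hence the hypothesis of Theorem \ref{polybthm} is satisfied, and applying that theorem (using the equivalence of local WPC-ness with its condition (1)) gives exactly the asserted equivalence once $t$-maximal ideals of finite norm are rewritten as height one prime ideals of finite norm.

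There is essentially no obstacle here, since all the work has been done in Theorem \ref{polybthm} and in the structure theory of Krull domains recalled earlier. The only point requiring a moment's care is the translation between the $t$-theoretic language of the general theorem and the classical height one language of the corollary, together with verifying that the hypothesis ``$\ppp D_\ppp$ is principal and has finite height'' is automatic for Krull domains; both are immediate from the facts stated in Section \ref{sec:3}.
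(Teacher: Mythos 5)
Your proposal is correct and matches the paper's (implicit) argument exactly: the paper states this corollary without proof as an immediate specialization of Theorem \ref{polybthm}, using precisely the facts you cite — that in a Krull domain the $t$-maximal ideals are the height one primes, and that $D_\ppp$ is a discrete valuation ring (so $\ppp D_\ppp$ is principal of height one) for each such $\ppp$, so the hypothesis of the theorem holds automatically.
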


Corollaries \ref{polybcor} and \ref{polybcor1} motivate the following problem.

\begin{problem}\label{dedekindconjecture}
Let $D$ be a Dedekind domain and $A$ a $D$-algebra.  Is it true that $A$ is WPC if and only if $A$ is locally WPC?  Equivalently, is it true that $A$ is WPC if and only if for every height one prime ideal $\ppp$ of $D$ of finite norm $N(\ppp)$ one has $a^{N(\ppp)} \equiv a \ (\mod \ppp A)$ for all $a \in A$?  If so, then does the equivalence hold more generally if $D$ is a Krull domain?   If not, then which, if either, implication is true?  
\end{problem}

Corollary \ref{polybcor}  shows that the answer to the above problem is affirmative under the added hypothesis that $\Int(D)$ has a regular basis or $A$ is $D$-torsion-free.

\section{Weak polynomial completions}\label{sec:8}

Let $D$ be an integral domain with quotient field $K$.  For any domain extension $A$ of $D$, there is a smallest WPC extension of $D$ containing $A$ and contained in $A \otimes_D K$, called the {\it weak polynomial completion of $A$} and denoted $w_D(A)$ \cite[Section 8]{jess}.

An example of weak polynomial completion is as follows.  Let $A$ be a domain extension of $D$ and $\alpha \in A$.  Following \cite[Section 5]{cah1} we let $D_\alpha = \{f(\alpha): f \in \Int(D)\}$ denote the {\it ring of values of $\Int(D)$ at $\alpha$}, which is a $D$-subalgebra of $A$ containing $D[\alpha] = \{f(\alpha): f \in D[X]\}$.  An easy argument shows that $D_\alpha = w_D(D[\alpha])$.  By \cite[Example 7.3]{ell2}, if $D = \ZZ[T]$, then $\Int(D) = D[X]$ and therefore any extension $A$ of $D$ is WPC, but if $A = \ZZ[T/2]$ then $A$ is not a polynomially complete extension of $D$.  In particular, $A = D_{T/2}$ is not a polynomially complete extension of $D$.  This provides a negative answer to \cite[Question 5.8]{cah1}.  

\begin{theorem}\label{numthm}
Let $D$ be a Dedekind domain with quotient field $K$, and let $L$ be a finite Galois extension of $K$ with ring of integers $D'$.  Let $S$ denote the complement of the union of the prime ideals of $D$ that split completely in the Dedekind domain $D'$, and let $D'' = w_D(D')$.
\begin{enumerate}
\item If $\ppp$ is a prime ideal of $D$ with $\ppp D'' \neq D''$, then $\ppp$ splits completely in $D'$.
\item $D'' \supseteq S^{-1}D'$.
\item $D'' = S^{-1}D'$ if the class group of $D$ is torsion.
\end{enumerate}
\end{theorem}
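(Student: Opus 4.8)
The plan is to exploit that $D'$ is the integral closure of the Dedekind domain $D$ in the finite extension $L$, hence itself a Dedekind domain with quotient field $L$, and that $D'' = w_D(D')$ is a ring between $D'$ and $L$, i.e.\ an overring of $D'$, hence again a Dedekind domain whose localization $D''_\PPP$ at a prime $\PPP$ coincides with the DVR $D'_{\PPP \cap D'}$. Only primes of finite norm carry content: at a prime $\ppp$ with $N(\ppp) = \infty$ one has $\Int(D_\ppp) = D_\ppp[X]$, so the weak-polynomial-completion condition is vacuous there, and I restrict attention to nonzero primes of finite norm (as the term ``ring of integers'' suggests). Since $D''$ is a domain extension of $D$, hence $D$-torsion-free, Corollary \ref{polybcor} gives that $D''$ is WPC if and only if it is locally WPC, and Corollary \ref{polybcor1} together with Theorem \ref{polybthm} supplies the usable criterion: for every height-one prime $\ppp$ of $D$ of finite norm, every prime of $D''$ over $\ppp$ is unramified with residue degree one.

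For (1), suppose $\ppp D'' \neq D''$ and choose a prime $\PPP$ of $D''$ lying over $\ppp$. Applying condition (7) of Theorem \ref{polybthm} to the WPC algebra $D''$ gives $\ppp D''_\PPP = \PPP D''_\PPP$ and $D''/\PPP \cong D/\ppp$. Writing $\qqq = \PPP \cap D'$ and using the overring identity $D''_\PPP = D'_\qqq$, these translate into $\ppp D'_\qqq = \qqq D'_\qqq$ and $D'/\qqq \cong D/\ppp$; that is, $\qqq$ is unramified over $\ppp$ with residue degree one. Because $L/K$ is Galois, $\operatorname{Gal}(L/K)$ acts transitively on the primes of $D'$ above $\ppp$, so all of them share the ramification index and residue degree of $\qqq$, namely $e = f = 1$; the identity $\sum e_i f_i = [L:K]$ then forces the number of primes above $\ppp$ to be $[L:K]$, i.e.\ $\ppp$ splits completely in $D'$.

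For (2) I will show $S \subseteq (D'')^\times$, whence $S^{-1}D' \subseteq D''$ since $D' \subseteq D''$. Let $s \in S$ and suppose some prime $\PPP$ of $D''$ contained $s$; then $\ppp := \PPP \cap D$ is a prime of $D$ with $s \in \ppp$ and $\ppp D'' \subseteq \PPP \neq D''$, so by (1) the prime $\ppp$ splits completely in $D'$. But $s \in \ppp$ then contradicts $s \in S = D \setminus \bigcup_{\ppp \text{ split}} \ppp$. Hence $s$ lies in no prime of $D''$ and is a unit, which proves (2).

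For (3), assume $\Cl(D)$ is torsion. The cleanest route is to prove directly that $S^{-1}D'$ is itself WPC; minimality of $D'' = w_D(D')$ then yields $D'' \subseteq S^{-1}D'$, and combined with (2) this gives equality. Since $S^{-1}D'$ is $D$-torsion-free, Corollaries \ref{polybcor} and \ref{polybcor1} reduce this to checking, for each finite-norm height-one prime $\ppp$, that $a^{N(\ppp)} \equiv a \pmod{\ppp S^{-1}D'}$ for all $a$. This is automatic when $\ppp S^{-1}D' = S^{-1}D'$, and otherwise $\ppp \cap S = \emptyset$, i.e.\ every element of $\ppp$ lies in some completely split prime. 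Here the torsion hypothesis does the work: choosing $m$ with $\ppp^m = (s)$ principal, any completely split prime $\qqq$ containing $s$ would contain $\ppp^m$, hence $\ppp$, forcing $\qqq = \ppp$ (both maximal) and contradicting that $\ppp$ is not completely split. Thus a non-split $\ppp$ always meets $S$, so a surviving $\ppp$ must split completely; for such $\ppp$ the ring $S^{-1}D'/\ppp S^{-1}D'$ is a finite product of copies of $D/\ppp \cong \FF_{N(\ppp)}$, in which $x^{N(\ppp)} = x$ holds identically. The main obstacle is exactly this last step: ruling out a non-split prime all of whose elements lie in the (infinite) union of completely split primes, which ordinary prime avoidance cannot handle and which is precisely where the torsionness of $\Cl(D)$, via principalization of a power of $\ppp$, becomes essential.
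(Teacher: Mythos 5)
Your proof is correct and follows essentially the same route as the paper's: part (1) applies Theorem \ref{polybthm} (via Corollary \ref{polybcor}, since $D''$ is a torsion-free WPC $D$-algebra) together with the overring identity $D''_\PPP = D'_{\PPP \cap D'}$ and Galois transitivity; part (2) deduces from (1) that $S$ consists of units of $D''$; and part (3) shows $S^{-1}D'$ is WPC by principalizing $\ppp^m$ via the torsion class group to force surviving primes to split completely, then invokes minimality of $w_D(D')$, exactly as in the paper. Your explicit restriction to primes of finite norm (and your check of condition (1) of Theorem \ref{polybthm} via $S^{-1}D'/\ppp S^{-1}D' \cong \FF_{N(\ppp)}^r$ rather than the paper's condition (8)) only makes explicit what the paper's appeal to Theorem \ref{polybthm} tacitly assumes, so there is no substantive difference.
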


\begin{proof}
First we prove (1). 
Suppose that $\ppp D'' \neq D''$, so $\ppp D'' \subseteq \ppp''$ for some maximal ideal $\ppp''$ of $D''$.     Let $\ppp' = D' \cap \ppp''$.  Note that $D''$, being an overring of the Dedekind domain $D'$, is a Dedekind domain and is flat over $D'$, and one has $D''_{\ppp''} = D'_{\ppp'}$ and therefore $\ppp'' D_{\ppp''} = \ppp' D'_{\ppp'}$.  Therefore, by Theorem \ref{polybthm}
one has 
$$\ppp D'_{\ppp'} = \ppp D''_{\ppp''} = \ppp'' D''_{\ppp''} = \ppp' D'_{\ppp'}$$ and $D''/\ppp'' \cong D'/\ppp' \cong D/\ppp$ as $D$-algebras.  Therefore $e_{\ppp' | \ppp} = f_{\ppp' | \ppp} = 1$, so since $L$ is Galois over $K$ it follows that $\ppp$ splits completely in $D'$.

Now, let $x \in D \backslash S$, so $x$ is not in any prime ideal of $D$ that splits completely in $D'$.  Writing $xD = \ppp_1^{k_1} \cdots \ppp_r^{k_r}$ with each $\ppp_i \subseteq D$ prime, we see from (1) that $\ppp_i D'' = D''$ for each $i$, and therefore $xD'' = D''$, that is, $x$ is a unit in $D''$.  Therefore $S^{-1}D' \subseteq D''$.  This proves (2).

Finally, suppose that the class group of $D$ is torsion.  To show that $D'' = S^{-1}D'$, by (2) it suffices to show that $S^{-1}D'$ is a WPC extension of $D$.  Let $\ppp$ be a prime ideal of $D$ with finite residue field, and let $\ppp'$ be any prime ideal of $S^{-1}D'$ lying over $\ppp$.
Then $\ppp'$ doesn't intersect $S$, so $\ppp = D \cap \ppp'$ is contained in the complement of $S$, which is equal to the union of the prime ideals of $D$ that split completely in $D'$.  Since $\ppp^k$ is principal for some $k$, say, $\ppp = (a)$, it follows that $a \in D \backslash S$, so $\ppp^k \subseteq \qqq$ for some prime $\qqq$ that splits completely in $D'$.  Therefore $\ppp = \qqq$ splits completely in $D'$.  Thus $\ppp D' = \qqq_1 \qqq_2 \cdots \qqq_r$, where the $\qqq_i$ are distinct prime ideals of $D'$ for which $D'/\qqq_i \cong D/\ppp$.  Reindexing the $\qqq_i$, we may assume there is a nonnegative integer $s$ such that $\qqq_i$ meets $S$ if and only if $i > s$.  It follows, then, that $$\ppp S^{-1}D' = (\qqq_1 S^{-1}D')(\qqq_2 S^{-1}D') \cdots (\qqq_s S^{-1}D'),$$ where the $\qqq_i S^{-1}D'$ are distinct prime ideals of $S^{-1}D'$ for which $$S^{-1}D'/\qqq_i S^{-1}D' \cong S^{-1}(D/\qqq_i) \cong S^{-1} (D/\ppp) \cong D/\ppp.$$ Therefore $S^{-1}D'$ is a locally WPC extension of $D$ by Theorem \ref{polybthm}, hence a WPC extension of $D$ since $S^{-1}D'$ is $D$-torsion-free.
\end{proof}

\begin{corollary}
Let $D$ be a Dedekind domain with torsion class group and with quotient field $K$, and let $L$ be a finite Galois extension of $K$ with ring of integers $D'$.  Let $S$ denote the complement of the union of the prime ideals of $D$ that split completely in the Dedekind domain $D'$, and let $D''$ be an overring of $D'$. Then $D''$ is a WPC extension of $D$ containing $D'$ if and only if $D'' \supseteq S^{-1}D'$.
\end{corollary}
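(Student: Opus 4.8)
The plan is to deduce the corollary directly from Theorem~\ref{numthm}, treating the two implications separately. The ``only if'' direction should be essentially formal, resting on the minimality (universal) property of the weak polynomial completion, while the ``if'' direction will require transporting the factorization computation from $S^{-1}D'$ up to the overring $D''$.

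For the forward implication, suppose $D''$ is a WPC extension of $D$ containing $D'$. Since $D''$ is by hypothesis an overring of $D'$, it sits inside the quotient field $L$ of $D'$, which is precisely $D' \otimes_D K$ (as $D'$ is integral over $D$). By definition the weak polynomial completion $w_D(D')$ is the \emph{smallest} WPC extension of $D$ that contains $D'$ and is contained in $D' \otimes_D K = L$; and Theorem~\ref{numthm}(3), using that $\Cl(D)$ is torsion, identifies $w_D(D') = S^{-1}D'$. Hence $D'' \supseteq w_D(D') = S^{-1}D'$, which is the desired conclusion.

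For the converse, assume $D'' \supseteq S^{-1}D'$. Then $D'' \supseteq D'$, and $D''$ is a $D$-torsion-free overring of the Dedekind domain $S^{-1}D'$, hence itself a Dedekind overring of $D$. By Corollary~\ref{polybcor} it suffices to show $D''$ is locally WPC, and since $D$ is Dedekind, Corollary~\ref{polybcor1} reduces this to checking $a^{N(\ppp)} \equiv a \ (\mod \, \ppp D'')$ for every height one prime $\ppp$ of $D$ of finite norm and every $a \in D''$. Fix such a $\ppp$. If $\ppp D'' = D''$ the congruence is vacuous, so assume $\ppp D'' \neq D''$; then $\ppp S^{-1}D' \neq S^{-1}D'$ (since $S^{-1}D' \subseteq D''$), which forces $\ppp \cap S = \emptyset$, i.e.\ $\ppp$ lies in the complement of $S$. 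Reusing the argument in the proof of Theorem~\ref{numthm}(3) --- writing $\ppp^k = (a)$ with $k$ chosen so that $\ppp^k$ is principal (possible since $\Cl(D)$ is torsion) and noting that $a$ then lies in some completely split prime --- shows that $\ppp$ splits completely in $D'$.

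The main obstacle, and the only genuinely non-formal step, is transporting the resulting factorization $\ppp S^{-1}D' = \qqq_1 \cdots \qqq_r$ (distinct primes with $S^{-1}D'/\qqq_i \cong D/\ppp$) up to $D''$. Here I would invoke that an overring of a Dedekind domain is an intersection of its localizations, so every maximal ideal $\MM$ of $D''$ satisfies $D''_\MM = (S^{-1}D')_{\MM \cap S^{-1}D'}$ with matching residue field; consequently $\ppp D''$ is again a product of \emph{distinct} maximal ideals of $D''$, each with residue field isomorphic to $D/\ppp$ (the $\qqq_i$ that become units in $D''$ simply drop out). Since $D''/\ppp D''$ is Noetherian, condition (8) of Theorem~\ref{polybthm} applies and yields $a^{N(\ppp)} \equiv a \ (\mod \, \ppp D'')$ for all $a \in D''$. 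As this holds at every relevant $\ppp$, the domain $D''$ is locally WPC, hence WPC by Corollary~\ref{polybcor}, completing the proof.
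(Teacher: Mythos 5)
Your proposal is correct and takes essentially the approach the paper intends: the corollary is stated without proof as an immediate consequence of Theorem \ref{numthm}, and your ``only if'' direction is exactly the minimality argument for $w_D(D') = S^{-1}D'$ supplied by Theorem \ref{numthm}(3), while your ``if'' direction faithfully transports the splitting computation from the proof of Theorem \ref{numthm}(3) to the overring $D''$ via the standard localization structure of overrings of Dedekind domains, then closes with conditions (1) and (8) of Theorem \ref{polybthm} and Corollaries \ref{polybcor} and \ref{polybcor1}. Two minor remarks: $D''$ is an overring of $D'$ but only an \emph{extension} of $D$ (not an overring of $D$, since it need not lie in $K$), and the converse admits a slightly shorter route --- $S^{-1}D'$ is a WPC extension of $D$ by the proof of Theorem \ref{numthm}(3), any overring of a Dedekind domain is a WPC extension of it by the same residue-field argument you give, and Lemma \ref{extext}(2) then yields that $D''$ is WPC over $D$.
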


\end{document}